\theoremstyle{plain}
\newtheorem{theorem}{Theorem}[section]
\newtheorem*{corollary*}{Corollary}
\newtheorem{prop}[theorem]{Proposition}
\newtheorem{lemma}[theorem]{Lemma}
\newtheorem*{proposition*}{Proposition}
\newtheorem*{theorem*}{Theorem}
\newtheorem*{lemma*}{Lemma}
\newtheorem*{claim*}{Claim}
\theoremstyle{definition}
\newtheorem*{definition*}{Definition}
\theoremstyle{remark}
\newtheorem*{obs*}{Observation}
\newtheorem{remark}{Remark}
\newcommand{\Z}[1]{\mathbb{Z}_{#1}}
\newcommand{\pr}[1]{\mathbb{P}\left(#1\right)}
\def\ZZ{\mathbb{Z}}
\newcommand{\Bin}{\ensuremath{\mathrm{Bin}}}
\newcommand{\eps}{\varepsilon}
\newcommand{\Ex}[1]{\mathbb{E}\left[#1\right]}
\newcommand{\eq}[1]{\begin{equation}\label{eq:#1}}
\newcommand{\eqe}{\end{equation}}
\newcommand{\eqr}[1]{\eqref{eq:#1}}
\def\HH{\mathcal{H}}
\def\N{\mathbb{N}}
\def\Z{\mathbb{Z}}
\def\ZZ{\mathbb{Z}}
\def\FF{\mathbb{F}}
\def\Pr{\mathbb{P}}
\def\eps{\varepsilon}
\def\le{\leqslant}
\def\ge{\geqslant}
\def\leq{\leqslant}
\def\geq{\geqslant}
\def\Bin{\textup{Bin}}
\def\<{\langle}
\def\>{\rangle}
\def\0{\textbf{0}}
\def\x{\mathbf{x}}
\def\y{\mathbf{y}}
\def\PP{\mathcal{P}}
\def\valpha{\boldsymbol{\alpha}}
\title[Deviation probabilities for arithmetic progressions]{Deviation probabilities for arithmetic progressions and other regular discrete structures}
\author{Gonzalo Fiz Pontiveros}
\address{Gonzalo Fiz Pontiveros, Department of Mathematics, Universitat Polit\`ecnica de Catalunya, Barcelona, Spain.}
\email{gonzalo.fiz@upc.edu}
\author{Simon Griffiths}
\address{Simon Griffiths, Departamento de Matem\'atica, PUC-Rio, Rua Marqu\^es de S\~ao Vicente 225, G\'avea, Rio de Janeiro 22451-900, Brazil}		
\email{simon@mat.puc-rio.br}
\author{Matheus Secco}
\address{Matheus Secco, Departamento de Matem\'atica, PUC-Rio, Rua Marqu\^es de S\~ao Vicente 225, G\'avea, Rio de Janeiro 22451-900, Brazil}
\email{matheussecco@mat.puc-rio.br}
\author{Oriol Serra}
\address{Oriol Serra, Department of Mathematics, Universitat Polit\`ecnica de Catalunya, and Barcelona Graduate School of Mathematics, Barcelona, Spain.} 
\email{oriol.serra@upc.edu}
\thanks{Acknowledgements: G.F.P. was supported by BGSMath Postdoctoral Grant and the Spanish Research Agency under projects MDM-2014-0445 and MTM2017-82166,  S.G. was supported by CNPq bolsa de produtividade em pesquisa (Proc. 310656/2016-8) and FAPERJ Jovem cientista do nosso estado (Proc. 202.713/2018), M.S. was supported as a PhD student by CAPES and O.S. was supported by grants MDM-2014-0445 and MTM2017-82166-P of the Spanish Ministry of Science.}
\begin{document}
\begin{abstract} 
Let the random variable $X\, :=\, e(\HH[B])$ count the number of edges of a hypergraph $\HH$ induced by a random $m$ element subset $B$ of its vertex set.  Focussing on the case that $\HH$ satisfies some regularity condition we prove bounds on the probability that $X$ is far from its mean.  It is possible to apply these results to discrete structures such as the set of $k$-term arithmetic progressions in the cyclic group $\mathbb{Z}_N$.  Furthermore, we show that our main theorem is essentially best possible and we deduce results for the case $B\sim B_p$ is generated by including each vertex independently with probability $p$.
\end{abstract}
\maketitle
\section{Introduction}

The problem of determining how well a random variable $X$ is concentrated around its mean $\Ex{X}$ has a long history and is of great interest in many areas of mathematics.  In the area of Combinatorics this question most frequently arises when $X$ counts the number of occurrences of some substructure.  The cases of subgraphs in a random graph and arithmetic progressions in a random set have been studied extensively in recent years, as we discuss below.  There is a common framework in which these problems may be viewed: Given a hypergraph $\HH$ on $[N]:=\{1,\dots ,N\}$ and a distribution on subsets $B\subseteq [N]$ we may ask how well concentrated is 
\[
X\, :=\, e(\HH[B])\, ,
\]
the number of edges of $\HH$ contained in $B$. 

Work on this problem has focussed on the case that $B\sim B_p$ is a $p$--random subset of $[N]$ (each element included independently with probability $p$).  In this case general bounds on deviation probabilities of the random variable $X:=e(\HH[B_p])$ follow from the famous inequality of Kim and Vu~\cite{KV2000}.  In the case of large deviations (of the order of the mean) further progress has been made by Janson and Ruci\'nski~\cite{JR2011}, who determined (under certain conditions) the log probability $\log(\pr{X>(1+\delta)\Ex{X}})$ up to a factor of order $\log{1/p}$, and by Warnke~\cite{W2017, W2020} who determined the log probability up to a constant factor (in which the constant may depend on $\delta$).  Recently Bhattacharya and Mukherjee~\cite{BM2019} have shown how the large deviations framework introduced by Chatterjee and Varadhan~\cite{CV2011} (in the context of subgraph counts) may be used to understand when replica symmetry is broken.


We focus on the case $B\sim B_m$ is a uniformly random $m$ element subset of $[N]=\{1,\dots ,N\}$.  We prove upper bounds on the probability of deviations of the random variable $e(\HH[B_m])$, which are particularly relevant in the case that the hypergraph $\HH$ is highly regular.  Furthermore, since $B_p$, the $p$-random subset, may be obtained by averaging over the models $B_m$ where $m$ is selected according to $\Bin(N,p)$, we may deduce results in the context of $p$--random sets.

Given a $k$-uniform hypergraph $\HH$ on vertex set $[N]$ and a subset $B\subseteq [N]$ we set
\[
N^{\HH}(B) \, :=\, e(\HH[B])\, ,
\]
the number of edges of $\HH$ contained in the set $B$.  We also define
\[
L^{\HH}(m)\, :=\, \Ex{N^{\HH}(B_m)}\, ,
\]
 the expected value of $N^{\HH}(B_m)$ where $B_m$ is a uniformly selected $m$ element subset of $[N]$.  Our focus will be on studying
 \[
 D^{\HH}(B_m)\, :=\, N^{\HH}(B_m)\, -\, L^{\HH}(m)\, ,
 \]
 the deviation of $N^{\HH}(B_m)$ from its mean.
 
We may now state our bounds on the probability of certain deviations $D^{\HH}(B_m)$.  We say that a hypergraph $\HH$ is $r$--tuple--regular\footnote{Equivalently, in the language of combinatorial designs, $\HH$ is an  $r-(N,k,\lambda)$ design, for some $\lambda$ .} if all $r$--tuples of vertices are included in the same number of edges.  Note that all hypergraphs are $0$--tuple-regular, and a hypergraph is $1$--tuple-regular if it is regular in the usual sense that each vertex is in the same number of edges.  We also note that, by a simple double counting argument, $r$--tuple regular implies $r'$--tuple regular for all $0\le r'\le r$.

\begin{theorem}\label{thm:reg}.  Let $1\le r\le k$.
Let $\HH$ be a $k$--uniform hypergraph on $[N]$.  Suppose that $\HH$ is $(r-1)$--tuple--regular with maximum $r$-degree $\Delta_r$.  Then
\[
\pr{|D^{\HH}(B_m)|>a}\, \le\, N^{O_k(1)}\, \exp\left(\frac{-\Omega_k(1) a^{2/r}}{m\Delta_{r}^{2/r}}\right)
\]
for all $a>0$.
\end{theorem}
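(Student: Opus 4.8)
The plan is to prove the bound by a martingale/exchangeable-pairs argument, peeling off one layer of "tuple-regularity" at a time. Since $\HH$ is $(r-1)$-tuple-regular, the count $N^{\HH}(B_m)$ restricted to any fixed $(r-1)$-tuple of chosen vertices behaves like a count in an $(r-1)$-fold-smaller, *linear*-in-remaining-vertices object once we condition on those vertices; iterating $r$ times should reduce the problem to a sum of (conditionally) independent-ish contributions, at which point a Bernstein/Azuma-type inequality gives the Gaussian-type tail with the correct exponent $a^{2/r}$. Concretely, I would expose the elements of $B_m$ one at a time, $v_1,\dots,v_m$, and set $D_i := \Ex{N^{\HH}(B_m)\mid v_1,\dots,v_i} - \Ex{N^{\HH}(B_m)\mid v_1,\dots,v_{i-1}}$, so that $D^{\HH}(B_m) = \sum_{i=1}^m D_i$ is a Doob martingale difference sequence (with respect to the random-permutation filtration, which is the standard device for $B_m$).

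**Key steps.** First I would record the elementary but essential fact that $(r-1)$-tuple-regularity forces the one-step conditional expectations to be low-degree polynomials in the number of further vertices to be revealed; in particular the *first-order* Lipschitz constant of $N^{\HH}$ under a single swap, conditioned on a partial exposure, is controlled by $\Delta_r$ times a binomial factor counting extensions. Second, I would bound the martingale differences: a single additional vertex $v_i$ can change the conditional count by at most $O_k(1)\cdot \Delta_r \cdot \binom{m}{r-1}/\binom{N}{r-1}$-type quantity, but crucially, because of the regularity, the *centered* increment is much smaller on typical steps — this is where the exponent $2/r$ rather than $2$ must come from. Third, I would feed these into a self-bounding or iterated-Freedman inequality: rather than a crude Azuma bound (which would give only exponent $2$ with a worse $m$-dependence), I would control the predictable quadratic variation $\sum_i \Ex{D_i^2\mid \mathcal F_{i-1}}$ and show it is itself concentrated, recursing on the parameter $r$. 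The recursion is the heart of the matter: the quadratic variation of the level-$r$ martingale is essentially a count in an $(r-1)$-tuple-regular auxiliary hypergraph, so Theorem~\ref{thm:reg} for $r-1$ (the inductive hypothesis) applies to it, and one optimizes the resulting two-term tail (a Gaussian piece plus an "exceptional" piece from the quadratic-variation deviation) to land on $\exp(-\Omega_k(1)a^{2/r}/(m\Delta_r^{2/r}))$. The base case $r=1$ is just Azuma--Hoeffding for the Doob martingale of a $1$-Lipschitz function on $B_m$, giving $\exp(-\Omega(a^2/(m\Delta_1^2)))$ as required.

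**Main obstacle.** The hard part will be making the recursion quantitatively clean: at each level one loses polynomial-in-$N$ factors (absorbed into the $N^{O_k(1)}$ prefactor, which is why the theorem only claims a polynomial, not subpolynomial, loss) and one must track how the "scale" parameter transforms — the deviation threshold $a$ at level $r$ must be translated into the correct threshold for the quadratic-variation count at level $r-1$, and the max-degree $\Delta_r$ must be related to the relevant degree parameter of the auxiliary hypergraph (this should again follow from $(r-2)$-tuple-regularity of the auxiliary object, but verifying the auxiliary hypergraph inherits the right regularity is the delicate bookkeeping step). A secondary subtlety is the union bound over which step is "exceptional" versus "typical", and ensuring the $\Omega_k(1)$ and $O_k(1)$ constants do not degrade to something depending on $r$ beyond the allowed $k$-dependence; since $r\le k$ this is fine, but it needs to be stated carefully. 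I would set up the induction so that the statement being proved includes both the tail bound and a companion bound on a truncated exponential moment, which makes the recursive step self-contained.
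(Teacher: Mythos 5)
Your high-level architecture matches the paper's: a Doob-type martingale with respect to the sequential revelation of $B_m$, induction on the regularity parameter $r$ with Azuma--Hoeffding as the $r=1$ base case, a two-term bound (Gaussian piece plus exceptional piece) at each inductive step, and tracking how the auxiliary object inherits regularity. However, the mechanism by which the induction advances is where your proposal diverges from the paper, and where it has a genuine gap.

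The paper does \emph{not} control the predictable quadratic variation $\sum_i \Ex{D_i^2\mid\mathcal F_{i-1}}$. Instead it first proves a precise martingale representation (Proposition~\ref{prop:Mart}) of $D^{\HH}_j(B_m)$ in terms of the partially-filled-edge counts $N_\ell^{\HH}(B_i)$ and the increments $X_\ell^{\HH}(B_i)$; it then runs a \emph{truncated} Azuma--Hoeffding inequality (Lemma~\ref{lem:HAv}) with a uniform truncation level $\alpha$, which reduces the inductive step to bounding $\pr{|X_\ell^{\HH}(B_i)|>\alpha}$. The recursion is then driven by the exact identity $A_\ell(B^{(x)}_{i-1}\cup\{x\}) = N_{\ell-1}^{\HH(x)}(B^{(x)}_{i-1})$, which says the increment is \emph{literally} a count in the link hypergraph $\HH(x)$ (a $(k-1)$-uniform hypergraph), to which the inductive hypothesis $P_r$ applies after checking $\HH(x)$ inherits near-regularity with max $r$-degree at most $\Delta_{r+1}$ (Lemma~\ref{lem:nearreg}). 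So the inductive hypothesis is applied to a \emph{tail bound on a single increment}, not to a concentration statement for the quadratic variation.

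Your proposal's crux assertion --- that the quadratic variation of the level-$r$ martingale ``is essentially a count in an $(r-1)$-tuple-regular auxiliary hypergraph'' --- is the unsubstantiated step, and I don't see why it should be true. The conditional variance $\Ex{D_i^2\mid\mathcal F_{i-1}}$ is a second-moment quantity obtained by averaging the squared centered increment over the choice of the next vertex; squaring produces a sum over \emph{pairs} of edges weighted by overlap pattern and residual probabilities, which is not a clean hypergraph count and has no obvious tuple-regularity. Even if one could force it into that shape, one would still need to relate its degree parameter to $\Delta_r$, which you flag as ``delicate bookkeeping'' but don't resolve. The paper sidesteps this entirely: by decomposing into $N_\ell$ levels first, the increment $X_\ell(B_i)$ (not its square) is already a centered count in a smaller hypergraph, so the recursion is on a first-moment object with an exact combinatorial identity behind it. If you want to pursue the quadratic-variation route you would need to replace the vague ``essentially a count'' by an actual identity or inequality, and I think you will find it considerably harder than the link-hypergraph identity the paper uses.

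One smaller inaccuracy: the base case does not use a ``$1$-Lipschitz'' function; the a.s.\ increment bound is $O_k(1)\Delta_1$, coming from the fact that a newly added vertex can complete at most $\binom{k-1}{\ell-1}d_{\HH}(b_i)\le O_k(1)\Delta_1$ new partial edges at each level $\ell$.
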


\begin{remark} We focus throughout the article on hypergraphs rather than even more general structures such as weighted hypergraphs or polynomials, that were considered by Kim and Vu~\cite{KV2000}.  In fact, the proof goes through with no major changes for the case of hypergraphs with positively weighted edges.  Since a general weighted hypergraph may be written as a difference of two with positive weights, one may deduce results in this setting.  Similarly, a bounded degree polynomial may be broken into a finite number of homogeneous polynomials which correspond to weighted hypergraphs.
\end{remark}

Unfortunately some natural hypergraphs are not precisely $r$-tuple-regular, but just very close to being so.  Let $\bar{d}_r=\bar{d}_r(\HH)$ denote the average degree of $r$-sets in a hypergraph $\HH$.  We say that $\HH$ is $(r,\eta)$--near--regular if every $r$--tuple of vertices is contained in $(1\pm \eta)\bar{d}_r$ edges.  Obviously, a hypergraph which is $r$-tuple-regular is $(r,\eta)$-near-regular for all $\eta\ge 0$.  In particular, Theorem~\ref{thm:reg} is a special case ($\eta=0$) of the following theorem.

\begin{samepage}
\begin{theorem}\label{thm:nearreg}  Let $1\le r\le k$ and let $\eta\in [0,3^{-r+1}]$.
Let $\HH$ be a $k$--uniform hypergraph on $[N]$.  Suppose that $\HH$ is $(r-1,\eta)$--near--regular with maximum $r$--degree $\Delta_r$.  Then
\[
\pr{|D^{\HH}(B_m)|>a}\, \le\, N^{O_k(1)}\, \exp\left(\frac{-\Omega_k(1) a^{2/r}}{m\Delta_{r}^{2/r}}\right)
\]
for all
\[
a\, \ge\, \left( 10k! \right)^{10^r}e(\HH)\left(\frac{\eta m^{k-1} }{N^{k-1}}\right)^{r/(r-1)}.
\]
\end{theorem}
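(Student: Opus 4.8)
Here is a plan for proving Theorem~\ref{thm:nearreg}.

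We may assume $r\ge 2$, since for $r=1$ Theorem~\ref{thm:nearreg} is contained in Theorem~\ref{thm:reg}. Put $q:=m/N$, let $\xi_v:=\mathbf 1[v\in B_m]$ and $Y_v:=\xi_v-q$, and expand (all set‑sums are over subsets of $[N]$)
\[
N^{\HH}(B_m)=\sum_{e\in\HH}\prod_{v\in e}(q+Y_v)=\sum_{j=0}^{k}q^{\,k-j}\sum_{|T|=j}d(T)\prod_{v\in T}Y_v ,
\]
where $d(T)=|\{e\in\HH:T\subseteq e\}|$. The reason to pass to the $Y_v$ is an elementary feature of the $B_m$‑model: every power sum $\sum_v Y_v^{\,i}=m(1-q)^i+(N-m)(-q)^i$ is a \emph{deterministic} constant (since $\xi_v\in\{0,1\}$ and $\sum_v\xi_v=m$), hence by Newton's identities so is every elementary symmetric polynomial $e_j(Y_1,\dots,Y_N)=\sum_{|T|=j}\prod_{v\in T}Y_v$. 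Writing $d(T)=\bar d_j+\eps_j(T)$ for $|T|=j\le r-1$, where $|\eps_j(T)|\le\eta\bar d_j$ because $(r-1,\eta)$‑near‑regularity descends (by double counting, with the same $\eta$) to level $j$, the pieces $q^{k-j}\bar d_j\,e_j(Y)$ with $j\le r-1$ are deterministic and cancel on subtracting $L^{\HH}(m)=\Ex{N^{\HH}(B_m)}$. This yields $D^{\HH}(B_m)=Z_{\ge r}+Z_{<r}$, where $Z_{\ge r}$ is obtained by subtracting its mean from $\sum_{j=r}^{k}q^{k-j}\sum_{|T|=j}d(T)\prod_{v\in T}Y_v$ — a polynomial in the $Y_v$ supported on monomials of degree between $r$ and $k$, all of whose coefficients are at most $\Delta_r$ (as $d(T)\le\Delta_r$ for $|T|\ge r$ and $q^{k-j}\le1$) — and $Z_{<r}$ is obtained by subtracting its mean from $\sum_{j=1}^{r-1}q^{k-j}\sum_{|T|=j}\eps_j(T)\prod_{v\in T}Y_v$, a sum of homogeneous forms of degrees $1,\dots,r-1$, the degree‑$j$ one having coefficients bounded by $C_j:=q^{k-j}\eta\bar d_j$.

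The bound for $Z_{\ge r}$ is exactly the content of Theorem~\ref{thm:reg}: when $\HH$ is $(r-1)$‑tuple‑regular one has $Z_{<r}=0$ and $D^{\HH}(B_m)=Z_{\ge r}$, and the proof of Theorem~\ref{thm:reg} uses regularity only to discard the low‑degree terms, so it in fact establishes, for every $k$‑uniform $\HH$,
\[
\pr{\,|Z_{\ge r}|>a/2\,}\le N^{O_k(1)}\exp\!\Big(\frac{-\Omega_k(1)\,a^{2/r}}{m\,\Delta_r^{2/r}}\Big).
\]
(Concretely one obtains this by a Doob martingale revealing the elements of $B_m$ one at a time and bootstrapping: a one‑step difference is again a polynomial in the revealed $\xi_v$ of one smaller degree, and $r$ rounds of this produce the exponent $2/r$ together with the parameter $\Delta_r$, in the spirit of Kim and Vu~\cite{KV2000}.) The \emph{same} argument applied to the degree‑$j$ form inside $Z_{<r}$, with the roles of $r,\Delta_r$ played by $j,C_j$, gives
\[
\pr{\,|Z_{<r}|>a/2\,}\le N^{O_k(1)}\sum_{j=1}^{r-1}\exp\!\Big(\frac{-\Omega_k(1)\,a^{2/j}}{m\,C_j^{2/j}}\Big).
\]

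It remains to check that the hypothesis on $a$ makes the second display dominated by the first. The $j$‑th term there is at most $N^{O_k(1)}\exp(-\Omega_k(1)a^{2/r}/(m\Delta_r^{2/r}))$ as soon as $a^{2/j}/C_j^{2/j}\gtrsim_k a^{2/r}/\Delta_r^{2/r}$, i.e.\ as soon as $a\gtrsim_k C_j^{\,r/(r-j)}\Delta_r^{-j/(r-j)}$, the most demanding cases being $j=1$ and $j=r-1$. Substituting $C_j=q^{k-j}\eta\bar d_j$ and $\bar d_j\binom{N}{j}=\binom{k}{j}e(\HH)$, and using the trivial inequality $\binom{k}{r}e(\HH)\le\binom{N}{r}\Delta_r$ (counting incidences of edges with $r$‑subsets) together with $m\le N$, one finds that $C_j^{\,r/(r-j)}\Delta_r^{-j/(r-j)}\le (10k!)^{10^r}e(\HH)(\eta m^{k-1}/N^{k-1})^{r/(r-1)}$ for all $1\le j\le r-1$, the constant being generous enough to absorb the (at most $r$‑fold) accumulation of the implicit constants; this is precisely the stated lower bound on $a$. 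A union bound then gives $\pr{|D^{\HH}(B_m)|>a}\le\pr{|Z_{\ge r}|>a/2}+\pr{|Z_{<r}|>a/2}\le N^{O_k(1)}\exp(-\Omega_k(1)a^{2/r}/(m\Delta_r^{2/r}))$, and $\eta=0$ recovers Theorem~\ref{thm:reg}.

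I expect the main obstacle to be the martingale concentration for $Z_{\ge r}$ itself — the technical heart of Theorem~\ref{thm:reg} — where one must control a polynomial of degree up to $k$ in the \emph{dependent} variables $(\xi_v)_v$ of the $B_m$‑model with access only to the single top‑level parameter $\Delta_r$; this is the origin of both the exponent $2/r$ and the $k$‑dependent constants. The secondary difficulty is the book‑keeping above: passing from the $a^{2/r}$‑type tail of the main term to the unavoidably weaker $a^{2/j}$‑type tails of the error forms forces a lower bound on $a$, and it is the degree‑$(r-1)$ form, with its $a^{2/(r-1)}$ exponent, that produces the exponent $r/(r-1)$ in the admissible threshold.
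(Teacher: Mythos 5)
Your decomposition is a genuinely different and appealing idea: exploiting the fact that in the $B_m$ model every power sum $\sum_v Y_v^i$ is deterministic, hence every elementary symmetric polynomial $e_j(Y)$ is deterministic, so near-regularity causes the low-degree part of $N^{\HH}(B_m)$ to contribute only an (approximately) deterministic constant that cancels on centering. This is not how the paper proceeds; the paper never passes to the centered indicators $Y_v$ or the symmetric functions. Instead it proves a martingale representation for $D^{\HH}_j(B_m)$ in terms of increments $X^{\HH}_\ell(B_i)=N^{\HH}_\ell(B_i)-\Ex{N^{\HH}_\ell(B_i)\mid B_{i-1}}$ (Proposition~\ref{prop:Mart}), and then runs a double induction ($P_r\Rightarrow Q_r\Rightarrow P_{r+1}$) that links deviation bounds to increment bounds via the link hypergraphs $\HH(x)$.

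There is, however, a genuine gap at the one place you flag as the ``main obstacle.'' You assert that the proof of Theorem~\ref{thm:reg} ``uses regularity only to discard the low-degree terms'' and therefore already establishes
\[
\pr{|Z_{\ge r}|>a/2}\le N^{O_k(1)}\exp\!\left(\frac{-\Omega_k(1)\,a^{2/r}}{m\,\Delta_r^{2/r}}\right)
\]
for every $k$-uniform $\HH$. That is not what the paper's argument gives. In the $P_r\Rightarrow Q_r$ step, the control of $X_\ell(B_i)$ for $\ell\ge r+1$ is obtained by identifying $A_\ell(B_i)$ with $N_{\ell-1}^{\HH(x)}(B_{i-1}^{(x)})$ and then applying $P_r$ to the link $\HH(x)$; this requires $\HH(x)$ to itself be $(r-1,3\eta)$-near-regular, which in turn requires $\HH$ to be $(r,\eta)$-near-regular (Lemma~\ref{lem:nearreg}(ii)). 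So regularity is not merely killing the $\ell\le r$ increments; it is also what makes the inductive hypothesis applicable to the links. For a general $\HH$ with bounded $\Delta_r$ but no regularity, $\HH(x)$ inherits nothing, and the induction does not close. Equivalently: Theorem~\ref{thm:reg} is a statement about $D^{\HH'}(B_m)$ for regular $\HH'$, and while that coincides with $Z_{\ge r}^{\HH'}$, it says nothing about $Z_{\ge r}^{\HH}$ when $\HH$ is not regular; $Z_{\ge r}^{\HH}$ is not itself $D^{\HH''}(B_m)$ for a regular $\HH''$ (the coefficients $d(T)$ for $|T|\ge r$ are constrained by hypergraph double-counting across levels and cannot be chosen freely, so there is no such reduction). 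The same issue afflicts your bound on each degree-$j$ form inside $Z_{<r}$: these are signed, homogeneous-degree-$j$ polynomials in the $Y_v$, which are even further from the form $D^{\G}(B_m)$ covered by the paper's machinery. What your plan really needs is a self-contained Kim--Vu-type concentration inequality for polynomials in the dependent variables $(Y_v)_v$ of the $B_m$ model, controlled only by the top-level coefficient bound $\Delta_r$. That is a statement of roughly the same difficulty as the theorem itself, and it is precisely the content that your ``Doob martingale plus $r$ rounds of bootstrapping'' parenthetical would have to supply. Your bookkeeping in the final paragraph (the comparison of $C_j^{r/(r-j)}\Delta_r^{-j/(r-j)}$ to the stated threshold on $a$) does check out at the level of exponents, so if that concentration input were available the argument would likely close; but as written the proposal substitutes an assertion for the paper's entire inductive engine.
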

\end{samepage}

\begin{remark} In the dense case ($m=\Theta(N)$), the bound given by Theorem~\ref{thm:nearreg} is best possible (up to the constant implicit in $\Omega_k(1)$) for all $r\ge 0$ and across the whole range $\Theta(1)\le \Delta_r\le \Theta(N)$.  Examples are given in Section~\ref{sec:LB}.
\end{remark}

\begin{remark}\label{rem:minr} One may easily observe by a double counting argument that an $(r-1,\eta)$-near-regular hypergraph $\HH$ is $(r'-1,\eta)$-near-regular for all $1\le r'\le r$.  So one may choose which of the above inequalities to apply.  Therefore
\[
\pr{D^{\HH}(B_m)>a}\, \le\, N^{O_k(1)}\, \min_{1\le r'\le r} \left\{\exp\left(\frac{-\Omega_k(1) a^{2/r'}}{m\Delta_{r'}^{2/r'}}\right)\right\}\, .
\]
It is worth remarking that the minimum is not always obtained at the extremes $r'\in \{1,r\}$, see the application to arithmetic progressions in Section \ref{sec:applications} for example.
\end{remark}

\begin{remark}\label{rem:regand}  The lower bound condition on $a$ given in Theorem~\ref{thm:nearreg} is best possible in the weak sense that there exist hypergraphs for which it cannot be significantly improved.  However, with extra conditions one may weaken the condition on $a$.  In particular, the condition in Theorem~\ref{thm:nearreg} may be weakened to
\[
a\, \ge\, \left( 10k! \right)^{10^r}e(\HH)\left(\frac{\eta m^{k-2} }{N^{k-2}}\right)^{r/(r-2)}
\]
if $\HH$ is regular and $(r-1,\eta)$-near-regular, for $r\ge 3$.  This result is stated as Proposition~\ref{prop:regand} in Section~\ref{sec:regand}. \end{remark}


The existing literature has focussed on the probability of deviations of $e(\HH(B_p))$ in the model of a $p$-random subset $B_p\subseteq [N]$, in which each element is included in $B_p$ independently with probability $p$.  As usual in probabilistic combinatorics we allow for the possibility that $p=p(n)$ is a function of $n$.  Our results apply for a range of moderate deviations, which are smaller than the order of magnitude of the mean.

For the range of deviations covered by the theorem below  it precisely provides the asymptotic log probability of the corresponding deviation event.   For comparison, the bounds given by the  Kim-Vu inequality~\cite{KV2000} are not so strong, however they often apply for a much larger range of deviations.

\begin{theorem}\label{thm:pworld}
Let $k\ge r\ge 2$.  Let $\HH_N$ be a sequence of $k$-uniform hypergraphs which are $(r-1,\eta)$-near-regular with maximum $r$-degree $\Delta_r$ and $V(\HH_N)=[N]$.  Let $\delta_N$ be a sequence satisfying
$$
\max \left \{\frac{\Delta_r (N\log N)^{r/2}}{p^{k-r/2} e(\HH)}\, ,\, \big(\eta^{r} p^{k-r}\big)^{1/(r-1)}\, ,\, \frac{1}{\sqrt{pN}} \right \} \ll \delta_N \ll \left(\frac{p^{k-r}e(\HH)}{N^r\Delta_r}\right)^{1/(r-1)}\, ,
$$
where $0 \leq p \leq 1$ is bounded away from $1$.
Then
$$
\pr{|D^{\HH_{N}} (B_p)| > \delta_N L^{\HH_{N}}(B_p)} \, =\,  \exp \left( -(1+o(1))\frac{\delta_N^2 pN}{2k^2(1-p)}\right)\, .
$$
\end{theorem}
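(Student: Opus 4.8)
The plan is to transfer the moderate-deviation bound for the fixed-size model $B_m$ (Theorems~\ref{thm:reg} and~\ref{thm:nearreg}) to the $p$-random model $B_p$ by conditioning on $|B_p|$, and to obtain the matching lower bound by a direct insertion/deletion argument. First I would write $\pr{|D^{\HH_N}(B_p)|>\delta_N L^{\HH_N}(B_p)}$ as a sum over $m$ of $\pr{|B_p|=m}\,\pr{|D^{\HH_N}(B_m)|>\delta_N L^{\HH_N}(B_m)}$, being careful that $L^{\HH_N}(B_p)$ and $L^{\HH_N}(B_m)$ differ; since $|B_p|$ concentrates around $pN$ with fluctuations of order $\sqrt{pN\log N}$, and since the assumption $\delta_N\gg 1/\sqrt{pN}$ guarantees that the deviation window $\delta_N pN$ is much larger than these fluctuations of $|B_p|$, I can restrict attention to $m=pN+O(\sqrt{pN\log N})$ at the cost of an error that is superpolynomially small, and replace $\delta_N L^{\HH_N}(B_m)$ by $(1+o(1))\delta_N L^{\HH_N}(pN)$. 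For each such $m$, apply Theorem~\ref{thm:nearreg} with $a=(1+o(1))\delta_N L^{\HH_N}(pN)$: here $L^{\HH_N}(pN)\approx p^k e(\HH_N)$ (more precisely $e(\HH_N)\binom{pN}{k}/\binom{N}{k}$), so the exponent becomes $-\Omega_k(1)(\delta_N p^k e(\HH))^{2/r}/(pN\,\Delta_r^{2/r})$. The hypotheses on $\delta_N$ are exactly what is needed: the upper bound $\delta_N\ll (p^{k-r}e(\HH)/(N^r\Delta_r))^{1/(r-1)}$ ensures this exponent dominates $\log N$ so the polynomial prefactor $N^{O_k(1)}$ is absorbed, while the lower bound $\delta_N\gg(\eta^r p^{k-r})^{1/(r-1)}$ is precisely the rescaled form of the side condition $a\ge (10k!)^{10^r}e(\HH)(\eta m^{k-1}/N^{k-1})^{r/(r-1)}$ in Theorem~\ref{thm:nearreg}, and $\delta_N\gg \Delta_r(N\log N)^{r/2}/(p^{k-r/2}e(\HH))$ is what makes the exponent exceed $\log N$ in the relevant regime. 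Applying the $r'=r$ case gives an upper bound of the claimed form but with the worse constant $\Omega_k(1)$ rather than $1/(2k^2(1-p))$; to get the sharp constant one must instead run the $B_m$ argument with the optimal choice of parameters, or equivalently note that in this moderate-deviation range the dominant contribution is a single-vertex effect, which is captured by the $r'=2$ term.

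For the lower bound I would isolate the cheapest way to force the deviation: condition on whether a single fixed vertex $v$ lies in $B_p$, or rather perturb the natural $(1,\eta)$-near-regular structure. The key computation is that changing the status of one vertex changes $N^{\HH}(B_p)$ by roughly $\bar d_1 \approx k\,e(\HH)/N$ in expectation, so forcing $\delta_N L^{\HH}(pN)\approx \delta_N p^k e(\HH)$ extra edges requires tilting the inclusion probabilities of about $\delta_N p^k e(\HH)/(p^{k-1} k e(\HH)/N)=\delta_N pN/k$ vertices; the entropic cost of biasing $\delta_N pN/k$ of the $pN$ in-vertices (or $(1-p)N$ out-vertices) by the requisite amount is, by a standard Cramér/Sanov estimate for $\mathrm{Bin}(N,p)$, of order $(\delta_N pN/k)^2/(pN(1-p))$ up to the $1/2$, giving $\exp(-(1+o(1))\delta_N^2 pN/(2k^2(1-p)))$. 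Making this rigorous amounts to choosing a tilted product measure on $\{0,1\}^N$ under which the deviation event has probability bounded below by a constant, and bounding its relative entropy with respect to the $p$-product measure; one checks using $\delta_N\ll(p^{k-r}e(\HH)/(N^r\Delta_r))^{1/(r-1)}$ and the near-regularity that the higher-order (multi-vertex interaction) corrections to the change in $N^{\HH}$ are lower order, so the linear approximation governs the cost.

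I expect the main obstacle to be pinning down the \emph{constant} $1/(2k^2(1-p))$ on both sides simultaneously, rather than just the exponent's order of magnitude. On the upper side, Theorem~\ref{thm:nearreg} as quoted only gives $\Omega_k(1)$; extracting the sharp $1/(2k^2(1-p))$ requires either revisiting the proof of Theorem~\ref{thm:nearreg} to track the optimal constant in the regime where $a/L^{\HH}(m)\to 0$ (where the martingale/Freedman-type argument should give a Gaussian-type tail with the variance proxy governed by the single-vertex differences, of size $\approx (k e(\HH)/N)$ each, summed over $m\approx pN$ steps with the hypergeometric-vs-binomial correction $(1-p)$), or invoking a CLT-type input for $D^{\HH}(B_m)$. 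On the lower side, the delicate point is showing that once we tilt to produce the required number of extra edges, the event actually occurs with probability bounded away from $0$ and not, say, $N^{-\omega(1)}$ — this needs a second-moment or local-CLT argument for $N^{\HH}$ under the tilted measure, again using near-regularity to control its variance. The two one-sided bounds for the stated two-sided probability match because the $+$ and $-$ deviations have, to leading exponential order, the same cost $\delta_N^2 pN/(2k^2(1-p))$, so neither dominates and the union/maximum is $\exp(-(1+o(1))\delta_N^2 pN/(2k^2(1-p)))$ as claimed.
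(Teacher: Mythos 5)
Your lower-bound instinct is right in spirit: the cheapest way to push $N^{\HH}(B_p)$ up by $\delta_N L(p)$ is to include about $\delta_N pN/k$ extra vertices, and the entropic cost of doing so is $\exp(-(1+o(1))\delta_N^2 pN/2k^2(1-p))$. But your upper-bound plan has a gap that would prevent you from recovering the sharp constant, and the suggested fixes would not close it.

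The issue is that you restrict the sum over $m$ to $m = pN + O(\sqrt{pN\log N})$ and then try to bound $\pr{N^{\HH}(B_m) > (1+\delta_N)L(p)}$ by a sharpened version of Theorem~\ref{thm:nearreg}. This is the wrong mechanism. The dominant contribution to the event $D^{\HH}(B_p) > \delta_N L(p)$ does \emph{not} come from $m$ near $pN$ with an unusual conditional configuration; it comes from $|B_p|$ being unusually large, specifically $|B_p| \approx m_* := (1+\delta_N)^{1/k}pN \approx pN(1 + \delta_N/k)$, at which point $L^{\HH}(m) \ge (1+\delta_N)L(p)$ and no conditional deviation is needed at all. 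When $\delta_N^2 pN \gg \log N$ (a large part of the allowed range, e.g.\ $\delta_N$ constant and $p$ constant) the true answer is superpolynomially small, so ``throwing away a superpolynomially small error'' discards precisely the mass you want to count. Moreover, the suggested sharpening of Theorem~\ref{thm:nearreg} via the ``variance proxy governed by single-vertex differences'' would produce the hypergeometric variance of $N^{\HH}(B_m)$ at fixed $m$, which (thanks to near-regularity) is lower order than the variance contributed by the fluctuation of $|B_p|$ itself; so that route gives an exponent that is too favourable, not the sharp one. The correct variance proxy is $(\partial_m L^{\HH}(m))^2 \cdot \mathrm{Var}(|B_p|)$, and this lives entirely in the binomial factor, not in the conditional law.

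The paper's argument is both simpler and sharper: it uses the elementary observation that $m \mapsto \pr{N^{\HH}(B_m) > t}$ is monotone increasing, so that for any $m_-$,
\[
\pr{D^{\HH}(B_p) > \delta_N L(p)} \le \pr{N^{\HH}(B_{m_-}) > (1+\delta_N)L(p)} + B_{N,p}(m_-),
\]
and for any $m_+$,
\[
\pr{D^{\HH}(B_p) > \delta_N L(p)} \ge B_{N,p}(m_+)\,\pr{N^{\HH}(B_{m_+}) > (1+\delta_N)L(p)}.
\]
Then $m_\pm$ are chosen as $m_* \mp g_N$ and $m_* + f_N$ for suitable lower-order $f_N, g_N$, so that the binomial tails $B_{N,p}(m_\pm)$ both equal $\exp(-(1+o(1))\delta_N^2 pN/2k^2(1-p))$, while Theorem~\ref{thm:nearreg} is invoked only to show that the two conditional probabilities are, respectively, $1-o(1)$ and $o(\exp(-\delta_N^2 pN/2k^2(1-p)))$. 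This means the sharp constant comes entirely from a binomial tail estimate; the hypergraph machinery is used only to show that the conditional fluctuations are negligible. It also sidesteps the local-CLT argument you anticipated needing for the lower bound: monotonicity plus the fact that $(1+\delta_N)L(p) < L(m_+) - a$ for the relevant $a$ reduces the $1-o(1)$ claim to a one-sided application of Theorem~\ref{thm:nearreg}, no second-moment or anti-concentration step required.
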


\begin{remark}\label{rem:regp} As in Remark~\ref{rem:regand}, we may be able extend the range of $\delta_N$  if we also know that $\HH$ is regular as well as $(r-1,\eta)$-near-regular.  Using Proposition~\ref{prop:regand} in place of Theorem~\ref{thm:nearreg} one may replace the term $\big(\eta^{r} p^{k-r}\big)^{1/(r-1)}$ by $\big(\eta^{r}p^{2k-2r}\big)^{1/(r-2)}$ in the lower bound of $\delta_N$ in Theorem \ref{thm:pworld}.
\end{remark}

When $\Delta_r$ is much larger than $e(\HH)/N^r$, the order of magnitude of the average $r$-degree, the range of deviations covered by the above theorem may be disappointing.  In particular this theorem does not appear to give new results when applied in the context of subgraph counts in the Erd\H os-R\'enyi random graph $G(n,p)$.  For results concerning moderate deviations for subgraph counts the reader may consult~\cite{DE2009,FMN2016,GGS2019,JW2015}.  There is a large literature dedicated to problems of large deviations of subgraph counts, we encourage the interested reader to consult the survey of Chatterjee~\cite{C2016}, the recent article of Harel, Mousset and Samotij~\cite{HMS2019}, and the references therein.

On the other hand the condition on $\delta_N$ in Theorem~\ref{thm:pworld} simplifies significantly if $\Delta_r$ is of the same order as the average $r$-degree.  For example, when $r=2$ and the hypergraph is regular the condition simplifies to
\[
\max \left \{\frac{\log{N}}{p^{k-1}N}\, ,\, \frac{1}{\sqrt{pN}} \right \} \ll \delta_N \ll p^{k-2}\, .
\]

\newpage
As an immediate application, consider $3$-term arithmetic progressions in the cyclic group $\Z/\N\Z$, for $N$ prime.  Let us write $D^{3}(B_p)$ for the deviation of the $3$--progressions count in a $p$-random subset $B_p$ of $\Z/\N\Z$.  Note that the expected number of such arithmetic progressions is $L^{3}(B_p)=p^3\binom{N}{2}$.

\begin{samepage}

\begin{theorem}\label{thm:APpworld}
Let $\delta_N$ be a sequence satisfying
\[
\max \left \{\frac{\log{N}}{p^{2}N}\, ,\, \frac{1}{\sqrt{pN}} \right \}\, \ll \,\delta_N\, \ll\, p.
\]
Then,
\[
\pr{D^{3} (B_p) > \delta_N L^3(B_p)} \, =\,  \exp \left( -(1+o(1))\frac{\delta_N^2 pN}{18(1-p)}\right)\, .
\]
Furthermore, the same bounds apply to the corresponding negative deviations.
\end{theorem}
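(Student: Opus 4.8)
The plan is to deduce Theorem~\ref{thm:APpworld} as a direct instance of Theorem~\ref{thm:pworld}, applied to the hypergraph $\HH = \HH_N$ whose vertex set is $\Z/N\Z$ and whose edges are the $3$-term arithmetic progressions $\{x, x+d, x+2d\}$ with $d \ne 0$. Thus $k=3$, and since $N$ is prime every pair of distinct vertices $\{a,b\}$ lies in exactly three such progressions (solving $a = x$, $b = x+2d$; $a = x$, $b = x+d$; $a = x+d$, $b = x+2d$ each uniquely determines $x,d$), so $\HH$ is exactly $2$-tuple-regular, hence $(r-1,\eta)$-near-regular with $r=3$, $\eta=0$. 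However, choosing $r=3$ would force the exponent $1/(r-1) = 1/2$ and $\Delta_3$-dependent terms; instead I take $r=2$, for which $\HH$ is $1$-tuple-regular (i.e.\ regular: each vertex lies in $3\cdot\binom{N}{2}/N = 3(N-1)/2$ edges, constant) with $\eta = 0$, and the maximum $2$-degree is $\Delta_2 = 3$, exactly the average $2$-degree. We also have $e(\HH) = 3\binom{N}{2} = \Theta(N^2)$ and $L^3(B_p) = p^3\binom N2$, consistent with the formula $L^{\HH}(B_p) = p^k e(\HH)$.

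With these parameters the condition on $\delta_N$ in Theorem~\ref{thm:pworld} becomes
\[
\max\left\{\frac{\Delta_2 (N\log N)}{p^{2} e(\HH)},\ 0,\ \frac{1}{\sqrt{pN}}\right\} \ll \delta_N \ll \left(\frac{p^{1}e(\HH)}{N^2\Delta_2}\right)\, .
\]
The middle term vanishes because $\eta = 0$. The first term is $\frac{3 N\log N}{p^2 \cdot 3\binom N2} = \Theta\!\left(\frac{\log N}{p^2 N}\right)$, and the upper bound is $\frac{p \cdot 3\binom N2}{3N^2} = \Theta(p)$; so the hypotheses of Theorem~\ref{thm:pworld} hold precisely when $\delta_N$ satisfies the displayed condition of Theorem~\ref{thm:APpworld}. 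The conclusion of Theorem~\ref{thm:pworld} then reads
\[
\pr{|D^{\HH_N}(B_p)| > \delta_N L^{\HH_N}(B_p)} = \exp\!\left(-(1+o(1))\frac{\delta_N^2 p N}{2\cdot 3^2 (1-p)}\right) = \exp\!\left(-(1+o(1))\frac{\delta_N^2 pN}{18(1-p)}\right),
\]
which is exactly the claimed estimate (and trivially bounds the one-sided deviation $\pr{D^3(B_p) > \delta_N L^3(B_p)}$ from above; the matching lower bound on this probability is the content of the sharpness half of Theorem~\ref{thm:pworld}).

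The only genuine point to verify, rather than a routine substitution, is the claim that $\pr{D^3(B_p) > \delta_N L^3(B_p)}$ is itself (not merely bounded above by) $\exp(-(1+o(1))\delta_N^2 pN/18(1-p))$, together with the analogous statement for negative deviations; this is where one must inspect how Theorem~\ref{thm:pworld} is proved to confirm that the lower bound on the deviation probability is obtained separately for positive and for negative deviations (one expects the extremal configuration to be, respectively, a slightly denser or slightly sparser interval, or a small planted clique-like structure, realised within the $B_p$ model). I also need to double-check the harmless arithmetic discrepancy between $\Z/N\Z$ (where I count ordered pairs $(x,d)$, giving each $3$-AP possibly with multiplicity, e.g.\ the degenerate reversal $d \mapsto -d$ yields the same set) and the stated hypergraph of genuine $3$-element subsets: for $N$ prime and $N > 3$ a $3$-AP $\{x,x+d,x+2d\}$ with $d\ne 0$ has exactly two representations $(x,d)$ and $(x+2d,-d)$, so $e(\HH) = \tfrac12 N(N-1) = \binom N2$ and each pair lies in $3$ edges after this identification — in any case $\Delta_2 = \Theta(1)$ equals the average $2$-degree, $e(\HH) = \Theta(N^2)$, and $L^3(B_p) = p^3\binom N2$, which are all that Theorem~\ref{thm:pworld} consumes, so the conclusion is unaffected. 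The main ``obstacle'' is therefore purely bookkeeping: making sure the three $\ll$-conditions and the constant $2k^2 = 18$ transcribe correctly under $k=3$, $r=2$, $\eta=0$.
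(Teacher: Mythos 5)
Your proposal matches the paper's argument exactly: the paper likewise observes that $\HH_3$ (the $3$-uniform hypergraph of $3$-term progressions in $\Z/N\Z$, $N$ prime) is $2$-tuple-regular and hence regular, so one may apply Theorem~\ref{thm:pworld} with $k=3$, $r=2$, $\eta=0$, $\Delta_2=3$, $e(\HH)=\binom N2$, and the substitution yields precisely the stated window for $\delta_N$ and the constant $2k^2=18$. Your point about one-sided versus two-sided deviations is well taken and resolved by inspecting the proof of Theorem~\ref{thm:pworld}: the lower bound in Section~\ref{sec:LBpworld} and the upper bound in Section~\ref{sec:UBpworld} are both established for the one-sided positive deviation (the negative case being symmetric), so the one-sided claim of Theorem~\ref{thm:APpworld} follows directly.
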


\end{samepage}

See Section~\ref{sec:applications} for a discussion of Theorem \ref{thm:APpworld} and analogous results.  

We remark that the results above would not hold for hypergraphs without some kind of regularity property.  Related results for the non-regular setting, which includes arithmetic progressions in $[N]=\{1,\dots ,N\}$ for example, have recently been obtained by Christoph Koch together with the second and third authors~\cite{GKS2020}.

\subsection{Layout of the article}

In Section~\ref{sec:overview} we prepare for the proof of Theorem~\ref{thm:nearreg} by stating a number of auxiliary results which we require.  In particular we state a martingale representation for the deviation $D^{\HH}(B_m)$, which may be of independent interest.  This section also introduces notation and required inequalities from probability theory.

In Section~\ref{sec:mproof} we complete the proof of Theorem~\ref{thm:nearreg}.   This is achieved by a double induction argument which links deviation probabilities to the increments of the martingale and vice-versa in successive steps. In Section~\ref{sec:pworld} we show how Theorem~\ref{thm:pworld} may be deduced from Theorem~\ref{thm:nearreg}. 

Section~\ref{sec:applications} illustrates possible applications of Theorem~\ref{thm:nearreg}  to obtain deviation probabilities for a variety of arithmetic structures. These include $k$--arithmetic progressions, Schur triples, additive energy and, more generally, solutions of linear systems in random sets.  We also include a direct proof of the deviation result for $3$-term arithmetic progressions which is simpler than the general case (Theorem~\ref{thm:nearreg}).  Since many of the same ideas arise, the reader may wish to read this proof before that of Theorem~\ref{thm:nearreg}.  See Theorem~\ref{thm:3apm}, and its proof.

In Section~\ref{sec:LB} we prove that the bound given in Theorem~\ref{thm:nearreg} is best possible up to the implicit constant $\Omega_k(1)$.  The proof is based on a construction of hypergraphs with certain regularity properties.  The details of the construction are given in the appendix.

Finally, in Section~\ref{sec:final} we give some concluding remarks and discuss some related open questions.






\section{Overview and Auxiliary results}\label{sec:overview}

In this section we introduce notation and auxiliary results which we require for the proof of Theorem~\ref{thm:nearreg}.  In particular we introduce a martingale representation for $D^{\HH}(B_m)$ (see Proposition~\ref{prop:Mart}) which is the basis of our proof of Theorem~\ref{thm:nearreg}.  This representation is similar in spirit to that introduced in~\cite{GGS2019} in the context of subgraph counts.  The section is divided into Section~\ref{sec:notation} in which we introduce notation, Section~\ref{sec:Aux} in which we state auxiliary results and Section~\ref{sec:prob} in which we state some inequalities from probability theory.  The proofs of the auxiliary results of Section~\ref{sec:Aux} are presented in Section ~\ref{sec:Auxproofs}.

\subsection{Notation}\label{sec:notation}

We will continue to use $N$ for the number of vertices of the hypergraph $\HH$.  We set $h:=e(\HH)$.

We will continue to use $m$ for the number of elements in the random subset $B_m\subseteq [N]$.  We denote its density by $t:=m/N$.  We sometimes consider the set $B_i$ at an earlier step $i$ of the process.  We denote its density $s:=i/N$.

Given a vertex $x$ of a hypergraph $\HH$ we define
\[
\HH(x)\, :=\, \{f\setminus \{x\}\, :\, f\in E(\HH)\, , \, x\in f\}\, .
\]
In the case that $\HH$ is a $k$-uniform hypergraph on $[N]$, then $\HH(x)$ is a $(k-1)$-uniform hypergraph on $[N]\setminus \{x\}$.

In addition to $N^{\HH}(B_m)$, the number of edges of $\HH$ in $B_m$, we shall also consider partially filled edges (with multiplicity).  Let $N_j^{\HH}(B_m)$ be the number of $j$--subsets of edges of $\HH$ which are contained in $B_m$. Equivalently,
\[
N_j^{\HH}(B_m)\, :=\, \sum_{f\in E(\HH)} \binom{|f\cap B_m|}{j}\, .
\]
For a $k$-uniform hypergraph $\HH$ we have $N^{\HH}(B_m)=N^{\HH}_{k}(B_m)$.


Throughout the article we denote by
\begin{align}
&L_{j}^{\HH}(m)\, :=\, \Ex{N_{j}^{H}(B_m)}\, =\, h\binom{k}{j} \frac{(m)_j}{(N)_j}\, , \quad \text{and}\label{eq:Lj}\\
&D_j^{\HH}(B_m)\, := \, N_j^{\HH}(B_m)\, -\, L_j^{\HH}(m)\, ,\label{eq:Dj}
\end{align}
the mean and deviation of $N_j^{\HH}(B_m)$ respectively.

In order to define the increments of the key martingale representation, we introduce\footnote{Implicit in this notation is the fact that we regard $B_i$ as a set together with the information of the order in which the points were added.}
\[
X^{\HH}_{\ell}(B_i)\, :=\, N^{\HH}_{\ell}(B_i)\, -\, \Ex{N^{\HH}_{\ell}(B_i)\, |\,B_{i-1}}.
\]
Since $N^{\HH}_{\ell}(B_{i-1})$ is determined by $B_{i-1}$ we observe that
\[
X^{\HH}_{\ell}(B_i)\, =\, A^{\HH}_{\ell}(B_i)\, -\, \Ex{A^{\HH}_{\ell}(B_i)\, |\,B_{i-1}},
\]
where 
\[
 A^{\HH}_{\ell}(B_i)\, :=\, N^{\HH}_{\ell}(B_i)\, -\, N^{\HH}_{\ell}(B_{i-1})
\]
denotes the increase in $N^{\HH}_{\ell}(B_i)$ with the addition of the $i$--th element.  Both the above  expressions for $X^{\HH}_{\ell}(B_i)$ will be used during the proof. We observe that, by its definition, the sequence $X^{\HH}_{\ell}(B_i)$ is a difference martingale for each $\ell$. 

When the hypergraph $\HH$ is clear from context we drop it from the notation.  That is, we write $N_{j}(B_m), D_j(B_m), A_{\ell}(B_i), X_{\ell}(B_i)$, etc.

It will sometimes be convenient to write $x\in a\pm b$ to express that $x$ belongs to the interval $[a-b,a+b]$.

\subsection{Auxiliary results}\label{sec:Aux}

The most significant auxiliary result we state here is the martingale representation for $D^{\HH}_j(B_m)$.

\begin{prop}\label{prop:Mart}
Let $\HH$ be a $k$-uniform hypergraph and let $1\le j\le k$. Then
\begin{equation}\label{eq:Mart}
D^{\HH}_{j}(B_m)\, =\, \sum_{i=1}^{m} \sum_{\ell=1}^{j} \frac{(N-m)_{\ell}(m-i)_{j-\ell}}{(N-i)_j}\, \binom{k-\ell}{k-j}\, \, X^{\HH}_{\ell}(B_i)\, .
\end{equation}
\end{prop}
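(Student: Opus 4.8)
The plan is to prove the martingale representation $D^{\HH}_j(B_m) = \sum_{i=1}^m D^{\HH}_j(B_m) - D^{\HH}_j(B_{i-1})$... wait, let me think about this more carefully.

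Let me reconsider.
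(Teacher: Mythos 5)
Your proposal is not a proof; it is an abandoned false start. The identity $D^{\HH}_j(B_m) = \sum_{i=1}^m D^{\HH}_j(B_i) - D^{\HH}_j(B_{i-1})$ that you began to write down is a trivial telescoping sum, but the increments $D^{\HH}_j(B_i) - D^{\HH}_j(B_{i-1})$ are not the increments $X^{\HH}_{\ell}(B_i)$ that appear in the claimed representation, and you never establish any connection between them. After noticing this does not immediately work, you simply stop. There is no argument to evaluate.

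For comparison, the paper's proof proceeds by double induction on $m$ and $j$, with trivial base cases $j=0$ and $m=0$. The key step is to compute the one-step conditional expectation
\[
\Ex{A_j(B_{m})\mid B_{m-1}}\, =\, \frac{(k-j+1)N_{j-1}(B_{m-1})\, -\, jN_{j}(B_{m-1})}{N-m+1}\, ,
\]
which, combined with the corresponding identity for the deterministic means $L_j$, yields the recursion
\[
D_j(B_m)\, =\, \frac{N-m-j+1}{N-m+1}\, D_j(B_{m-1})\, +\, \frac{k-j+1}{N-m+1}\,D_{j-1}(B_{m-1})\, +\, X_j(B_m)\, .
\]
One then substitutes the inductive hypotheses for $D_j(B_{m-1})$ and $D_{j-1}(B_{m-1})$ into this recursion and verifies, by a direct (if somewhat tedious) algebraic check split into four cases according to whether $i=m$ or $i<m$ and $\ell=j$ or $\ell<j$, that the coefficient of each $X_{\ell}(B_i)$ matches the stated formula. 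This recursion is the idea your proposal is missing: without it, telescoping alone gives you no way to see the specific coefficients $\frac{(N-m)_{\ell}(m-i)_{j-\ell}}{(N-i)_j}\binom{k-\ell}{k-j}$ that appear in \eqref{eq:Mart}.
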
 

The equation \eqref{eq:Mart} expresses $D^{\HH}_{j}(B_m)$ as a linear combination of difference martingales and it is therefore a martingale. 

It will also be useful to note the ways in which $\eta$-near-regularity is inherited.

\begin{lemma}\label{lem:nearreg}
Let $\HH$ be a $k$-uniform $(r,\eta)$-near-regular hypergraph with maximum $(r+1)$-degree $\Delta_{r+1}$, for some $\eta \in [0, 1/3]$.  Let $x\in V(\HH)$.  Then
\begin{enumerate}
\item[(i)] $\HH$ is $(r-1,\eta)$-near-regular
\item[(ii)] $\HH(x)$ is $(r-1, 3\eta)$-near-regular with maximum $r$-degree at most $\Delta_{r+1}$.
\end{enumerate}
\end{lemma}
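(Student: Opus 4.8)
Both parts are essentially double-counting identities, so the strategy is to track how the degree of a $j$-set in $\HH$ relates to degrees in the derived hypergraphs, and then push the multiplicative error $(1\pm\eta)$ through the count. For part (i), I would fix an $(r-1)$-set $S\subseteq V(\HH)$ and write its degree in $\HH$ as an average over extensions: by a standard double count,
\[
d_{\HH}(S)\,=\,\frac{1}{k-r+1}\sum_{y\notin S} d_{\HH}(S\cup\{y\})\, .
\]
Since $\HH$ is $(r,\eta)$-near-regular, every term $d_{\HH}(S\cup\{y\})$ lies in $(1\pm\eta)\bar d_r$, hence so does the average; and since this holds for every $S$, the common value forces $d_{\HH}(S)\in(1\pm\eta)\bar d_{r-1}$ as well (the average of the $d_{\HH}(S)$ over all $(r-1)$-sets $S$ is exactly $\bar d_{r-1}$, and they all sit in a window of multiplicative width $(1\pm\eta)$ around the same number, so that number is within $(1\pm\eta)$ of $\bar d_{r-1}$). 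This gives (i).

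For part (ii), fix $x\in V(\HH)$ and an $(r-1)$-set $T\subseteq V(\HH)\setminus\{x\}$. The key observation is the identity
\[
d_{\HH(x)}(T)\,=\,d_{\HH}(T\cup\{x\})\, ,
\]
since edges of $\HH(x)$ containing $T$ correspond exactly to edges of $\HH$ containing $T\cup\{x\}$ (with $x$ removed). Now $T\cup\{x\}$ is an $r$-set of $\HH$, so $(r,\eta)$-near-regularity immediately gives $d_{\HH(x)}(T)\in(1\pm\eta)\bar d_r(\HH)$, and the maximum $r$-degree bound $d_{\HH(x)}(T)\le\Delta_{r+1}$ follows because $T\cup\{x\}$ is an $r$-set and any $(r+1)$-set containing $T\cup\{x\}$ has degree at most $\Delta_{r+1}$, hence — summing over the extensions — $d_{\HH}(T\cup\{x\})\le(k-r)\Delta_{r+1}/1$... wait, more carefully: an $r$-set has degree at most the number of edges through it, and each such edge contains $\binom{k-r}{1}$-many $(r+1)$-extensions, so $d_\HH(T\cup\{x\})\le \Delta_{r+1}$ directly is false in general — I should instead note $d_\HH(T\cup\{x\}) = d_{\HH(x)}(T)$ and observe that an $r$-set of $\HH$ containing $x$ restricted to $\HH(x)$ is an $(r-1)$-set whose $\HH(x)$-degree equals the $\HH$-degree of an $r$-set, which is at most $\Delta_r(\HH)\le\Delta_{r+1}$ by the double-counting monotonicity of max degrees. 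The remaining point is the constant $3$ in "$3\eta$": the subtlety is that the \emph{average} $r$-degree of $\HH(x)$, namely $\bar d_{r-1}(\HH(x))$, is \emph{not} equal to $\bar d_r(\HH)$ — it is a ratio $e(\HH(x))/\binom{N-1}{r-1}$ versus $r\cdot e(\HH)/\binom{N}{r}\cdot(\text{stuff})$ — so I must compare the window $(1\pm\eta)\bar d_r(\HH)$ around the $\HH(x)$-degrees with the correct center $\bar d_{r-1}(\HH(x))$.

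The main obstacle, then, is exactly this last step: controlling the relationship between $\bar d_{r-1}(\HH(x))$ and $\bar d_r(\HH)$. The plan is to express $e(\HH(x))=d_\HH(x)$ (the degree of the single vertex $x$), which by part (i) applied with the $1$-set $\{x\}$ — or rather by near-regularity of $\HH$ at level... I'd need $\HH$ to be $(1,\eta)$-near-regular, which follows from (i) iterated down — lies in $(1\pm\eta)\bar d_1(\HH)$; and $\bar d_1(\HH)=k\cdot e(\HH)/N$. Chaining these multiplicative $(1\pm\eta)$ factors, together with the elementary estimate $(1+\eta)/(1-\eta)\le 1+3\eta$ valid for $\eta\le 1/3$, converts the $(1\pm\eta)$ window around $\bar d_r(\HH)$ into a $(1\pm 3\eta)$ window around the genuine average $\bar d_{r-1}(\HH(x))$. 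I would isolate the inequality $(1+\eta)/(1-\eta)\le 1+3\eta$ for $\eta\in[0,1/3]$ as a one-line computation, and the rest is bookkeeping of binomial ratios.
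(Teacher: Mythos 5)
Your overall plan matches the paper's proof — double-count in both parts, chain the multiplicative errors, and use $(1+\eta)/(1-\eta)\le 1+3\eta$ for $\eta\le 1/3$ — but there are two places where the reasoning, as you've written it, does not land.

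\textbf{Part (i).} Your identity
\[
d_{\HH}(S)=\frac{1}{k-r+1}\sum_{y\notin S} d_{\HH}(S\cup\{y\})
\]
is the right starting point, and it puts every $d_{\HH}(S)$ in $(1\pm\eta)M$ with $M:=\frac{N-r+1}{k-r+1}\,\bar d_r$. But your parenthetical justification — ``they all sit in a $(1\pm\eta)$ window around the same number $M$, the average of the $d_\HH(S)$ is $\bar d_{r-1}$, so $M$ is within $(1\pm\eta)$ of $\bar d_{r-1}$'' — only gives $d_{\HH}(S)\le\tfrac{1+\eta}{1-\eta}\,\bar d_{r-1}$, i.e.\ $(r-1,3\eta)$-near-regularity, \emph{not} the claimed $(r-1,\eta)$. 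The point you are skipping is that $M$ equals $\bar d_{r-1}$ \emph{exactly}: from $\bar d_j=e(\HH)\binom{k}{j}/\binom{N}{j}$ one computes $\bar d_{r-1}/\bar d_r=(N-r+1)/(k-r+1)$, so $M=\bar d_{r-1}$ and there is no loss. You should make that identity explicit; without it, the argument proves a weaker statement.

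\textbf{Part (ii), the degree bound.} The observation $d_{\HH(x)}(T)=d_{\HH}(T\cup\{x\})$ is exactly right and is all you need. For the maximum $r$-degree of $\HH(x)$, take $T$ to be an $r$-set in $V(\HH(x))$; then $T\cup\{x\}$ is an $(r+1)$-set in $\HH$, so $d_{\HH(x)}(T)=d_{\HH}(T\cup\{x\})\le\Delta_{r+1}$, and you are done. The detour you take instead — ending in ``$\Delta_r(\HH)\le\Delta_{r+1}$ by the double-counting monotonicity of max degrees'' — is wrong: monotonicity goes the other way. By double counting, $d_{\HH}(S)=\tfrac{1}{k-r}\sum_{y}d_{\HH}(S\cup\{y\})\le\tfrac{N-r}{k-r}\Delta_{r+1}$ for an $r$-set $S$, so $\Delta_r$ is in general much \emph{larger} than $\Delta_{r+1}$. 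You had the correct one-line argument in hand; the confusion arose from mixing up which size of set $T$ is in the two sub-claims (for near-regularity $T$ is an $(r-1)$-set, for the degree bound it must be an $r$-set).

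\textbf{Part (ii), the $3\eta$ bound.} Here your plan is correct and coincides with the paper's: use $e(\HH(x))=d_{\HH}(x)\in(1\pm\eta)\bar d_1(\HH)$ (which follows from (i) iterated to $(1,\eta)$-near-regularity), compare $\bar d_{r-1}(\HH(x))$ to $\bar d_r(\HH)$ via the binomial ratios, and absorb the two $(1\pm\eta)$ factors into a single $(1\pm3\eta)$ using $\eta\le 1/3$. No issues there.
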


A major part of the proof of Theorem~\ref{thm:nearreg} involves controlling the size of the increments in the martingale representation.  In some cases we can control $|X^{\HH}_{\ell}(B_i)|$ directly and deterministically.

\begin{lemma}\label{lem:deterministic}
Let $\HH$ be a $k$-uniform hypergraph on $[N]$ which is $(r,\eta)$-near-regular. If $1 \leq \ell \leq r$, then
\[
|X_{\ell}^{\HH}(B_i)| \, \leq \, \frac{2 \ell \binom{k}{\ell} \eta s^{\ell-1} h}{N}. 
\]
\end{lemma}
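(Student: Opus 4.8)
The plan is to obtain an exact combinatorial expression for the increment, reinterpret it through $\ell$-degrees so that near-regularity can be invoked, and then bound $X^{\HH}_\ell(B_i)$ simply as the deviation of one value from an average of values all lying in a short interval. Throughout, write $d_\ell(T)$ for the number of edges of $\HH$ containing a given $\ell$-set $T$, so that the average $\ell$-degree equals $\bar d_\ell = h\binom{k}{\ell}\big/\binom{N}{\ell}$; since $\ell\le r$, the double-counting noted in Remark~\ref{rem:minr} (cf. Lemma~\ref{lem:nearreg}(i)) shows $\HH$ is also $(\ell,\eta)$-near-regular, i.e. $d_\ell(T)\in(1\pm\eta)\bar d_\ell$ for every $\ell$-set $T$. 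We may also assume $i\ge\ell$, since otherwise $|B_{i-1}|<\ell-1$ forces $N^{\HH(y)}_{\ell-1}(B_{i-1})=0$ for all $y$ and the bound is trivial.

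First I would fix the $i$-th vertex $x=x_i$, so that $B_i=B_{i-1}\cup\{x\}$ with $x\notin B_{i-1}$, and use $|f\cap B_i|=|f\cap B_{i-1}|+\mathbf{1}[x\in f]$ together with $\binom{a+1}{\ell}-\binom{a}{\ell}=\binom{a}{\ell-1}$ to get
\[
A^{\HH}_\ell(B_i)=\sum_{f\ni x}\binom{|f\cap B_{i-1}|}{\ell-1}=\sum_{g\in E(\HH(x))}\binom{|g\cap B_{i-1}|}{\ell-1}=N^{\HH(x)}_{\ell-1}(B_{i-1}),
\]
the middle equality being the bijection $f\mapsto f\setminus\{x\}$ between edges through $x$ and $E(\HH(x))$, valid because $x\notin B_{i-1}$. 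Since, given $B_{i-1}$, the vertex $x$ is uniform on the $N-i+1$ vertices outside $B_{i-1}$, this yields
\[
X^{\HH}_\ell(B_i)=N^{\HH(x)}_{\ell-1}(B_{i-1})-\frac{1}{N-i+1}\sum_{y\notin B_{i-1}}N^{\HH(y)}_{\ell-1}(B_{i-1}).
\]

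Next I would re-count each term by $(\ell-1)$-subsets of $B_{i-1}$: a pair $(S,f)$ with $|S|=\ell-1$, $S\subseteq B_{i-1}$, $y\in f\in E(\HH)$ and $S\subseteq f\setminus\{y\}$ is counted, for fixed $S$, exactly $d_\ell(S\cup\{y\})$ times (and $y\notin S$, so $|S\cup\{y\}|=\ell$). Hence, using $(\ell,\eta)$-near-regularity and that there are $\binom{i-1}{\ell-1}$ choices of $S$,
\[
N^{\HH(y)}_{\ell-1}(B_{i-1})=\sum_{S\in\binom{B_{i-1}}{\ell-1}}d_\ell(S\cup\{y\})\in\binom{i-1}{\ell-1}(1\pm\eta)\bar d_\ell
\]
for every $y\notin B_{i-1}$. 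Both $N^{\HH(x)}_{\ell-1}(B_{i-1})$ and the average over $y$ lie in this interval, which has length $2\eta\binom{i-1}{\ell-1}\bar d_\ell$, so $|X^{\HH}_\ell(B_i)|\le 2\eta\binom{i-1}{\ell-1}\bar d_\ell$.

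Finally I would substitute $\bar d_\ell=h\binom{k}{\ell}\big/\binom{N}{\ell}$ and estimate, with $s=i/N$,
\[
\frac{\binom{i-1}{\ell-1}}{\binom{N}{\ell}}=\ell\cdot\frac{(i-1)_{\ell-1}}{(N)_\ell}=\frac{\ell}{N}\prod_{j=1}^{\ell-1}\frac{i-j}{N-j}\le\frac{\ell}{N}\Big(\frac{i}{N}\Big)^{\ell-1}=\frac{\ell\, s^{\ell-1}}{N},
\]
where $(i-j)/(N-j)\le i/N$ because $i\le N$ and $j\ge1$. Combining the last two displays gives $|X^{\HH}_\ell(B_i)|\le 2\eta h\binom{k}{\ell}\cdot \ell s^{\ell-1}/N$, the claimed bound. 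I do not expect a genuine obstacle: the only points requiring care are getting the two combinatorial identities right and recognizing that $\ell\le r$ is precisely the hypothesis that allows near-regularity to be applied at the level of $\ell$-sets — once those are in place, the rest is a short computation.
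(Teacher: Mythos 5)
Your proof is correct and follows essentially the same route as the paper's: express the increment $A_\ell(B_i)$ as a sum of $\ell$-degrees $d_\ell(C\cup\{b_i\})$ over $(\ell-1)$-subsets $C\subseteq B_{i-1}$, invoke $(\ell,\eta)$-near-regularity (inherited from $(r,\eta)$-near-regularity via Lemma~\ref{lem:nearreg}(i)) to put every such degree in $(1\pm\eta)\bar d_\ell$, note that both $A_\ell(B_i)$ and $\Ex{A_\ell(B_i)\mid B_{i-1}}$ then lie in the same interval of length $2\eta\binom{i-1}{\ell-1}\bar d_\ell$, and finish with the elementary falling-factorial estimate. The only cosmetic difference is that you route through the reinterpretation $A_\ell(B_i)=N^{\HH(x)}_{\ell-1}(B_{i-1})$ (which the paper reserves for Lemma~\ref{lem:claim}) before re-counting by $(\ell-1)$-subsets, whereas the paper's proof of this lemma writes the sum over $C\subseteq B_{i-1}$ directly; the content is identical.
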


Finally, given a $B_i$--measurable event $E$ let $E^{-}$ be the $B_{i-1}$--measurable event that $E$ occurs for some extension $B_{i-1}\cup \{x\}$ of $B_{i-1}$.  Since the conditional probability $\pr{E|E^{-}}$ is between $1/N$ and $1$ we immediately obtain the following.

\begin{lemma}\label{lem:pN} Let $E$ be a $B_i$ measurable then
\[
\pr{E}\, \le\, \pr{E^{-}}\, \le\, N\pr{E}\, .
\]
\end{lemma}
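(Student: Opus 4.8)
The plan is to exploit the one-step structure of the process building $B_i$ from $B_{i-1}$ together with the definition of $E^-$. First I would record the elementary observation that, since $E$ is $B_i$-measurable and $E^-$ is the $B_{i-1}$-measurable event that \emph{some} choice of the $i$-th point $x$ extends $B_{i-1}$ into the event $E$, the occurrence of $E$ forces the occurrence of $E^-$; hence $E\subseteq E^-$ as events, which gives the left-hand inequality $\pr{E}\le \pr{E^-}$ immediately by monotonicity of probability.

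For the right-hand inequality I would condition on $B_{i-1}$. On the event $E^-$, by definition there is at least one admissible choice of the new vertex $x\in[N]\setminus B_{i-1}$ for which the resulting $B_i=B_{i-1}\cup\{x\}$ lies in $E$; since the $i$-th vertex is chosen uniformly among the $N-i+1$ remaining vertices, the conditional probability $\pr{E\mid B_{i-1}}$ is at least $1/(N-i+1)\ge 1/N$ on $E^-$ (and is $0$ off $E^-$). Therefore $\pr{E}=\Ex{\pr{E\mid B_{i-1}}\mathbf 1_{E^-}}\ge \frac1N\pr{E^-}$, which rearranges to $\pr{E^-}\le N\pr{E}$. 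Combining the two bounds yields the claimed chain $\pr{E}\le\pr{E^-}\le N\pr{E}$.

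There is essentially no obstacle here: the only point requiring a little care is the bookkeeping that $B_i$ is regarded as an ordered set (as flagged in the footnote to the definition of $X^{\HH}_\ell(B_i)$), so that "extending $B_{i-1}$ by one point" is the correct notion of the one-step filtration, and that the count $N-i+1$ of available vertices is at most $N$, which is all that is needed for the crude bound $1/N$. I would keep the write-up to two or three sentences, since the statement is exactly the assertion that a single uniform step changes probabilities by at most a factor of $N$.
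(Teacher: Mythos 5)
Your proof is correct and matches the paper's own (one-line) justification: the paper simply observes that $\pr{E\mid E^-}$ lies between $1/N$ and $1$ and declares the lemma immediate. Your version, conditioning on $B_{i-1}$ and noting that $\pr{E\mid B_{i-1}}\ge 1/(N-i+1)\ge 1/N$ on $E^-$ and vanishes off $E^-$, is the same argument spelled out a touch more carefully, with the containment $E\subseteq E^-$ giving the left-hand inequality.
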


\subsection{Martingale Inequalities}\label{sec:prob} 

Our proofs will rely on the Azuma--Hoeffding~\cite{A1967,H1963} martingale inequality.
 
\begin{lemma}[Azuma--Hoeffding inequality]\label{lem:HA}
Let $(S_i)_{i=0}^{m}$ be a martingale with increments $(X_i)_{i=1}^{m}$, and let $c_i=\|X_i\|_{\infty}$ for each $1\le i\le m$.
Then, for each $a>0$, 
\[
\pr{S_m-S_0\, >\, a}\, \le \, \exp\left(\frac{-a^2}{2\sum_{i=1}^{m}c_i^2}\right)\, .
\]
Furthermore, the same bound holds for $\pr{S_m-S_0\, <\, -a}$.
\end{lemma}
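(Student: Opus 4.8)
The plan is to use the standard exponential moment method (the Chernoff--Bernstein trick) together with Hoeffding's lemma on the conditional moment generating function of each martingale increment; nothing from the earlier parts of the excerpt is needed, as the statement is self-contained.

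First I would fix $\lambda>0$, apply Markov's inequality to $e^{\lambda(S_m-S_0)}$, and obtain
\[
\pr{S_m-S_0>a}\, \le\, e^{-\lambda a}\, \Ex{\exp\bigl(\lambda(S_m-S_0)\bigr)}\, .
\]
Writing $S_m-S_0=\sum_{i=1}^{m}X_i$ and letting $(\mathcal{F}_i)_{i=0}^m$ denote the underlying filtration, I would peel off the increments one at a time, using the tower property:
\[
\Ex{e^{\lambda(S_m-S_0)}}\, =\, \Ex{e^{\lambda\sum_{i=1}^{m-1}X_i}\, \Ex{e^{\lambda X_m}\, \big|\, \mathcal{F}_{m-1}}}\, .
\]

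The key step is to bound the inner conditional expectation by $\exp(\lambda^2 c_i^2/2)$. Since $(S_i)$ is a martingale, $\Ex{X_i\mid\mathcal{F}_{i-1}}=0$, and by hypothesis $|X_i|\le c_i$ almost surely, so this is Hoeffding's lemma applied conditionally. I would prove it by convexity of $t\mapsto e^{\lambda t}$ on $[-c_i,c_i]$: writing $t=\tfrac{c_i-t}{2c_i}(-c_i)+\tfrac{c_i+t}{2c_i}(c_i)$ gives $e^{\lambda t}\le\tfrac{c_i-t}{2c_i}e^{-\lambda c_i}+\tfrac{c_i+t}{2c_i}e^{\lambda c_i}$; taking conditional expectations and using $\Ex{X_i\mid\mathcal{F}_{i-1}}=0$ yields $\Ex{e^{\lambda X_i}\mid\mathcal{F}_{i-1}}\le\cosh(\lambda c_i)\le e^{\lambda^2 c_i^2/2}$, the last inequality by comparing the Taylor series of $\cosh$ with that of $t\mapsto e^{t^2/2}$. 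Substituting back and iterating over $i=m,m-1,\dots,1$ gives $\Ex{e^{\lambda(S_m-S_0)}}\le\exp\bigl(\tfrac{\lambda^2}{2}\sum_{i=1}^{m}c_i^2\bigr)$.

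Combining the two displays gives $\pr{S_m-S_0>a}\le\exp\bigl(-\lambda a+\tfrac{\lambda^2}{2}\sum_i c_i^2\bigr)$ for every $\lambda>0$, and I would finish by optimizing: the choice $\lambda=a/\sum_i c_i^2$ yields the claimed bound $\exp\bigl(-a^2/(2\sum_i c_i^2)\bigr)$. For the lower tail, observe that $(-S_i)_{i=0}^m$ is also a martingale, with increments $(-X_i)$ satisfying $\|-X_i\|_\infty=c_i$, so applying the bound just proved to it controls $\pr{S_m-S_0<-a}=\pr{(-S_m)-(-S_0)>a}$. The only genuinely non-bookkeeping ingredient is Hoeffding's lemma itself, i.e.\ the convexity estimate on the conditional moment generating function of a bounded centered random variable; everything else is routine manipulation.
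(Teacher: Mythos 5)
Your proof is correct: it is the standard Chernoff/exponential-moment argument, with Hoeffding's lemma applied conditionally to each increment and the tower property used to peel increments from the top, followed by the usual optimization over $\lambda$ and the martingale $(-S_i)$ for the lower tail. The paper does not actually prove this lemma---it cites it as classical (Azuma 1967, Hoeffding 1963) and uses it as a black box---so there is no in-paper proof to compare against; your argument matches the textbook proof referenced there. One small remark worth being explicit about if you write this up: the conditional Hoeffding step needs $\Ex{X_i\mid\mathcal{F}_{i-1}}=0$ (from the martingale property) \emph{and} $|X_i|\le c_i$ almost surely; the latter is exactly what the hypothesis $c_i=\|X_i\|_\infty$ supplies, and since the bound $|X_i|\le c_i$ holds unconditionally it holds conditionally on $\mathcal{F}_{i-1}$ as well, so the convexity estimate $e^{\lambda t}\le\tfrac{c_i-t}{2c_i}e^{-\lambda c_i}+\tfrac{c_i+t}{2c_i}e^{\lambda c_i}$ can legitimately be averaged against the conditional law of $X_i$.
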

 
By considering a ``truncation'' of the increments in which $X_i$ is set to $0$ if  it could be larger than $c_i$ with positive probability then one immediately obtains the following straightforward variant.  We state the lemma in our context of a sequence of random sets $(B_i)_{i=0}^{N}$.  This process may be defined by taking $b_1,\dots,b_N$ to be a uniformly random permutation of $\{1,\dots ,N\}$ and setting $B_i=\{b_1,\dots ,b_i\}$ for each $i=0,\dots N$.

\begin{lemma}[Azuma--Hoeffding inequality (a variant)]\label{lem:HAv}
Let $(S_i)_{i=0}^{m}$ be a martingale with respect to the natural filtration of the process $(B_i)_{i=0}^{N}$, let $(X_i)_{i=1}^{m}$ be the increments of the process and let $(c_i)_{i=1}^{N}$ be a sequence of real numbers.
Then, for each $a>0$, 
\[
\pr{S_m-S_0\, >\, a}\, \le \, \exp\left(\frac{-a^2}{2\sum_{i=1}^{m}c_i^2}\right)\, +\, N\sum_{i=1}^{m}\pr{|X_i|>c_i}\, .
\]
Furthermore, the same bound holds for $\pr{S_m-S_0\, <\, -a}$.
\end{lemma}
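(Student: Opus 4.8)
The plan is to deduce this from the classical Azuma--Hoeffding bound (Lemma~\ref{lem:HA}) by passing to a \emph{truncated} martingale whose increments are bounded by the $c_i$ surely, and then paying a small error term for the probability that the truncation actually changed the process. One may assume $c_i\ge 0$ for all $i$, since if some $c_i<0$ then $\pr{|X_i|>c_i}=1$ and the right-hand side is at least $N\ge 1$, so the inequality is trivial.

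First I would set up the truncation. For each $1\le i\le m$ the event $\{|X_i|>c_i\}$ is $B_i$--measurable; let $E_i^-$ be its $B_{i-1}$--measurable ``predecessor'' in the sense of Lemma~\ref{lem:pN}, i.e.\ the event that $|X_i|>c_i$ for some extension $B_{i-1}\cup\{x\}$ of $B_{i-1}$. The key observation is that on $(E_i^-)^c$ no extension --- in particular not the realised one --- has $|X_i|>c_i$, so $|X_i|\le c_i$ there. I would then define
\[
\widetilde X_i\, :=\, X_i\,\mathbf{1}\big[(E_i^-)^c\big]\, ,\qquad \widetilde S_i\, :=\, S_0+\sum_{j=1}^{i}\widetilde X_j\, .
\]
Since $E_i^-$ is $B_{i-1}$--measurable and $\Ex{X_i\mid B_{i-1}}=0$, we get $\Ex{\widetilde X_i\mid B_{i-1}}=\mathbf{1}[(E_i^-)^c]\,\Ex{X_i\mid B_{i-1}}=0$, so $(\widetilde S_i)_{i=0}^m$ is a martingale for the natural filtration of $(B_i)$, and by construction $|\widetilde X_i|\le c_i$ surely. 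The one point requiring care is exactly that the truncation indicator must be measurable with respect to the information at step $i-1$, not merely at step $i$ --- this is why one truncates using $E_i^-$ rather than $\{|X_i|>c_i\}$ directly, and it is precisely what Lemma~\ref{lem:pN} is there to provide. I do not anticipate any genuine obstacle beyond getting this measurability point right.

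Next I would apply Lemma~\ref{lem:HA} to $(\widetilde S_i)_{i=0}^m$ with the constants $c_1,\dots,c_m$, obtaining
\[
\pr{\widetilde S_m-S_0>a}\, \le\, \exp\!\left(\frac{-a^2}{2\sum_{i=1}^{m}c_i^2}\right).
\]
Finally, setting $F:=\bigcup_{i=1}^{m}E_i^-$, on $F^c$ we have $\widetilde X_j=X_j$ for every $j\le m$, hence $\widetilde S_m=S_m$; therefore $\{S_m-S_0>a\}\cap F^c\subseteq\{\widetilde S_m-S_0>a\}$, and a union bound together with Lemma~\ref{lem:pN} gives
\[
\pr{S_m-S_0>a}\, \le\, \pr{\widetilde S_m-S_0>a}+\sum_{i=1}^{m}\pr{E_i^-}\, \le\, \exp\!\left(\frac{-a^2}{2\sum_{i=1}^{m}c_i^2}\right)+N\sum_{i=1}^{m}\pr{|X_i|>c_i}\, .
\]
For the lower tail one repeats the argument verbatim with the martingale $(-S_i)$, whose increments $-X_i$ satisfy $|{-X_i}|=|X_i|$, so the same bound holds for $\pr{S_m-S_0<-a}$.
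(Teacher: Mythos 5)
Your proof is correct and is essentially the same as the paper's: the truncation indicator $\mathbf{1}\bigl[(E_i^-)^c\bigr]$ you build from Lemma~\ref{lem:pN} is exactly the paper's $1_{\|X_i|B_{i-1}\|_{\infty}\le c_i}$, and the rest of the argument (martingale property of the truncated process, Azuma--Hoeffding, union bound with Lemma~\ref{lem:pN}) matches. The only addition is your explicit dispatch of the trivial case $c_i<0$, which the paper leaves implicit.
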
 
 
\begin{proof}
We first define for $1 \leq i \leq m$ the ``truncation'' $X^*_i$ of the increment $X_i$ as
\[
X^*_i\, :=\, X_i\, 1_{\|X_i|B_{i-1}\|_{\infty}\le c_i}\, .
\]
Let us define a new process $(S^*_j)_{j=0}^{m}$ by $S^*_0 := S_0$ and for $1 \leq j \leq m$: 
\[
S^*_j := S^*_0 + \sum_{i=1}^j X^*_i.
\]
Since the event considered by the indicator function is $B_{i-1}$--measurable we have
$\Ex{X^*_i\, |\, B_{i-1}}\, =\, 0$
and so $(S^*_j)_{j=0}^{m}$ is a martingale with respect to the natural filtration of the process $(B_i)_{i=0}^{N}$. Note also that the increments of this process satisfy $|X^*_i| \le c_i$ almost surely. Therefore the Azuma--Hoeffding inequality gives us
\begin{equation}\label{eq:ha_truncation}
\pr{S^*_m - S^*_0 \, >\, a} \, \leq \, \exp\left(\frac{-a^2}{2\sum_{i=1}^{m}c_i^2}\right).
\end{equation}
We also observe that by union bound and Lemma ~\ref{lem:pN}

\begin{equation}\label{eq:dif_truncation}
\begin{split}
\pr{S^*_m \neq S_m}\,  &\leq\, \sum_{i=1}^{m} \pr{X^*_i \neq X_i} \\
                    &=\, \sum_{i=1}^{m} \pr{\|X_i|B_{i-1}\|_{\infty} > c_i} \\
                    &\leq \, N \sum_{i=1}^{m} \pr{|X_i| > c_i}.
\end{split}
\end{equation}
Finally we have 
\[
\pr{S_m-S_0\, >\, a} \, \le \, \pr{S^*_m - S^*_0 \, >\, a} + \pr{S^*_m \neq S_m}
\]
and so by \eqref{eq:ha_truncation} and \eqref{eq:dif_truncation} we get the desired result.
\end{proof}

\subsection{Proofs of auxiliary results}\label{sec:Auxproofs}
 
We now prove Proposition~\ref{prop:Mart} and Lemmas~\ref{lem:nearreg} and~\ref{lem:deterministic}. 

\begin{proof}[Proof of Proposition~\ref{prop:Mart}]   Fix the hypergraph $\HH$.  We prove the required expression~\eqref{eq:Mart} by a double induction over $m$ and $j$.  The base cases $j=0$ and $m=0$ are trivial.

For the induction step we may assume that~\eqref{eq:Mart} holds if $j'<j$ or if $j'=j$ and $m'<m$.
The argument proceeds by focussing on the latest point added.  We recall that $X_j(B_m)=A_j(B_m)-\Ex{A_j(B_m)|B_{m-1}}$ and that $A_j(B_m)=N_{j}(B_m)-N_{j-1}(B_{m-1})$ counts the increase in $N_j(B_m)$ with the addition of the $m$--th element of $B_m$.  Considering that any such increase must consist of a $(j-1)$--subset together with an extra element of the same edge (which is not already present) and each vertex has probability $1/(N-m+1)$ to be selected next, we have that
\[
\Ex{A_j(B_{m})|B_{m-1}}\, =\, \frac{(k-j+1)N_{j-1}(B_{m-1})\, -\, jN_{j}(B_{m-1})}{N-m+1}\, .
\]
We will use this expression to find a suitable expression for $D_{j}(B_m)$ in terms of the deviations $D_{j-1}(B_{m-1})$, $D_{j}(B_{m-1})$ and $X_j(B_m)$.  The first step will be to express $D_j$ as $N_j-L_j$.  To reach the later steps we expand $N_j(B_m)$ as $A_j(B_m)+N_j(B_{m-1})$ and when possible express $N_j$ as $L_j+D_j$ and use the identity
\[
\frac{(k-j+1)L_{j-1}(B_{m-1})\, -\, jL_{j}(B_{m-1})}{N-m+1}\, =\, L_{j}(m)\, -\, L_j(m-1)\, .
\]
We obtain the following expression for $D_{j}(B_m)$:
\begin{align*}
D_j(B_m)\, & =\, N_j(B_m)\, -\, L_j(m)\\
 & =\, A_j(B_m)\, +\, D_j(B_{m-1})\, -\big(L_j(m)\, -\, L_j(m-1)\big)\\
 & =\, \frac{N-m-j+1}{N-m+1}\, D_j(B_{m-1})\, +\, \frac{(k-j+1)}{N-m+1}D_{j-1}(B_{m-1})\, +\, X_j(B_m)\, .
 \end{align*}
The required expression~\eqref{eq:Mart} now follows immediately from the induction hypothesis by simply checking the coefficient of each $X_{\ell}(B_i)$.  This may be verified easily by checking the cases (i) $i=m$ and $\ell=j$, (ii) $i = m$ and $\ell < j$, (iii) $i < m$ and $\ell = j$, and (iv) $i < m$ and $\ell < j$.

In case (i), the coefficients on each side are $1$ and in case (ii), the coefficients on each side are $0$. In case (iii), the coefficients on both sides are $(N-m)_j/(N-i)_j$. Finally in case (iv), the coefficient on the right hand side is given by 
\begin{align*}
&\frac{N-m-j+1}{N-m+1} \cdot \frac{(N-m+1)_{\ell} (m-1-i)_{j-\ell}}{(N-i)_j} \cdot \binom{k - \ell}{k - j} \, \phantom{\Bigg|} \\
&+ \frac{k-j+1}{N-m+1} \cdot \frac{(N-m+1)_{\ell} (m-1-i)_{j-1-\ell}}{(N-i)_{j-1}} \cdot \binom{k - \ell}{k - j +1}  \phantom{\Bigg|} \\
&= \frac{(N-m)_{\ell-1} (m-1-i)_{j-1-\ell}}{(N-i)_j} \binom{k-\ell}{k-j} \left[ (N-m-j+1)(m-i-j+\ell) + (N-i-j+1)(j-\ell)\right]\phantom{\Bigg|}  \\
&= \frac{(N-m)_{\ell-1} (m-1-i)_{j-1-\ell}}{(N-i)_j} \binom{k-\ell}{k-j} \cdot (N-m-\ell+1)(m-i) \phantom{\Bigg|} \\
&= \frac{(N-m)_{\ell}(m-i)_{j-\ell}}{(N-i)_j} \binom{k-\ell}{k-j},\vspace{2mm}\phantom{\Bigg|} 
\end{align*}
which agrees with the coefficient on the left hand side.
\end{proof}

\begin{proof}[Proof of Lemma~\ref{lem:nearreg}]
Part (i) is immediate by a double counting argument.  This argument shows that the average $(r-1)$--degree is $(N-r+1)/r$ times the average $r$--degree, while the maximum $(r-1)$--degree is at most this multiple of the maximum $r$--degree, and similarly for the minimum.

For (ii) we observe that $\HH(x)$ is $(k-1)$-uniform with maximum $r$-degree at most $\Delta_{r+1}$. We will prove now that $\HH(x)$ is $(r-1,3\eta)$-near-regular.   Since $\HH$ is $(r, \eta)$-near-regular, it is also $(1,\eta)$-near-regular, by (i), and therefore $d_{\HH}(x) \, \ge\, (1-\eta) hk/N$.  And so it follows from a simple double counting argument that the average $(r-1)$-degree in $\HH(x)$ satisfies
\[
\bar{d}_{r-1}^{\, \HH(x)}\,  = \, \frac{(k-1)_{r-1}}{(N-1)_{r-1}} \, d_{\HH}(x) \, =\, \bar{d}_r^{\, \HH} \cdot \frac{N}{hk} \, d_{\HH}(x)\,  \ge\, (1-\eta)\bar{d}_r^{\HH}\, .
\]
Now let $A \subseteq V(\HH(x))$ be an $(r-1)$ element set. Since $\HH$ is $(r,\eta)$-near-regular we have
\[
d_{\HH(x)}(A)\, =\, d_{\, \HH} (A \cup \{ x \})\, \le \, (1 + \eta)\bar{d}_r^{\, \HH},
\]
and so, since $\eta\in [0,1/3]$ we have
\[
d_{\HH(x)}(A)\, \leq\, \frac{1 + \eta}{1 -\eta} \,\bar{d}_{r-1}^{\, \HH(x)}\, \le \, (1 + 3\eta)\, \bar{d}_{r-1}^{\, \HH(x)}\, .
\]
A near identical argument gives the lower bound $(1-\eta)/(1+\eta)\ge (1-3\eta)$ times $\bar{d}_{r-1}^{\, \HH(x)}$, and so completes the proof.
\end{proof}
 
 We now prove Lemma~\ref{lem:deterministic}, which bounds the possible value of $|X_{\ell}(B_i)|$ in a $k$-uniform $(r,\eta)$-near-regular hypergraph $\HH$.
 
 \begin{proof}[Proof of Lemma~\ref{lem:deterministic}]
Recall first that $X_{\ell}(B_i):= A_{\ell}(B_i)-\Ex{A_{\ell}(B_i)|B_{i-1}}$ and that $A_{\ell}(B_i) = N_{\ell}(B_i) - N_{\ell}(B_{i-1})$. If $b_i$ is the last element added to $B_i$, i.e, $B_i = B_{i-1} \cup \{b_i\}$, then
\[
A_{\ell}(B_i) = \sum_{\substack{C \subseteq B_{i-1} \\ |C| = \ell -1}} d(C \cup \{b_i\}).
\]
As $1 \leq \ell \leq r$ and $\HH$ is $(r,\eta)$-near-regular, we have that $\HH$ is also $(\ell,\eta)$-near-regular by part (i) of Lemma~\ref{lem:nearreg}.  And so $d(C \cup \{b_i\}) =(1\pm \eta)\bar{d}_{\ell}$ for all $(\ell-1)$ element subsets $C \subseteq B_{i-1}$.  It follows that
\[
A_{\ell}(B_i) \, =\,  (1\pm \eta) \binom{i-1}{\ell-1}\bar{d}_{\ell}\, .
\]
Now, as $X_{\ell}(B_i):= A_{\ell}(B_i)-\Ex{A_{\ell}(B_i)|B_{i-1}}$, it follows that
\[
|X_{\ell}(B_i)|\,  \leq\, 2 \eta \binom{i-1}{\ell-1} \bar{d}_{\ell}\, .
\]

Finally, since $\bar{d}_{\ell} = h \binom{k}{\ell}/\binom{N}{\ell}$, and using the bound $(i-1)_{\ell -1}\le s^{\ell-1}(N-1)_{\ell-1}$, we obtain
\[
|X_{\ell}(B_i)|\, \leq \, \frac{2\ell \binom{k}{\ell} \eta s^{\ell-1} h}{N}\, ,
\]
as required.
\end{proof}


\section{Proof of Theorem~\ref{thm:nearreg}}\label{sec:mproof}

The proof of Theorem~\ref{thm:nearreg} is given by induction on $r$.  In fact we prove two series of statements with a joint induction.  Since $N^{\HH}_{k}(B_m)=N^{\HH}(B_m)$, it is clear that Theorem~\ref{thm:nearreg} is included in the sequence of statements $P_{r}, r\ge 1$.

{\large $\mathbf{P_{r}}$}: \textit{ For all $k\ge j\ge r$ and $\eta\in [0,3^{-r+1}]$, for all $k$-uniform hypergraphs $\HH$ on $[N]$ which are $(r-1,\eta)$-near-regular with maximum $r$-degree $\Delta_r$, we have}
\[
\pr{D^{\HH}_j(B_m)>a}\, \le\, N^{O_k(1)}\, \exp\left(\frac{-\Omega_k(1) a^{2/r}}{m\Delta_{r}^{2/r}}\right) \phantom{\Bigg|}
\]
\textit{for all  $0\le m\le N$ and all $a\ge C_r \eta^{r/(r-1)}ht^{(j-1)r/(r-1)}$, where $C_r = \left( 10k! \right)^{10^r}$.}

The other sequence of statements $Q_r$ will be related to the behaviour of the random variables $X^{\HH}_{\ell}(B_i)$ that occur in the martingale representation of $D_{\HH}(B_m)$.  We define $Q_r$ to be the following statement.

{\large $\mathbf{Q_{r}}$}: \textit{For all $k\ge \ell\ge r+1$ and $\eta\in [0,3^{-r}]$, for all $k$-uniform hypergraphs $\HH$ on $[N]$ which are $(r,\eta)$-near-regular with maximum $(r+1)$-degree $\Delta_{r+1}$, we have}
\[
\pr{|X^{\HH}_{\ell}(B_i)|\, >\, \alpha}\, \le\, N^{O_k(1)}\, \exp\left(\frac{-\Omega_k(1) \alpha^{2/r}}{i\Delta_{r+1}^{2/r}}\right)
\]
\textit{for all $0\le i\le N$ and all $\alpha\ge D_r\eta s^{\ell -1}h/N$, where $D_r = \left(10k!\right)^{10^r+10}$.}

The base case of the induction, $P_1$, is proved in Section~\ref{sec:base} by a straightforward application of the Azuma--Hoeffding inequality.  We complete the proof by showing that $P_r$ implies $Q_r$ and $Q_r$ implies $P_{r+1}$.  These proofs are given in Sections~\ref{sec:PtoQ} and~\ref{sec:QtoP} respectively.

\subsection{The base case -- $P_1$} \label{sec:base}

Let us fix $k\ge j\ge 1$, $\eta\in [0,1]$, and a $k$-uniform hypergraph $\HH$ on $[N]$ with maximum degree $\Delta$.

We use the Azuma--Hoeffding inequality applied to the martingale representation:
\[
D_{j}(B_m)\, =\, \sum_{i=1}^{m} \sum_{\ell=1}^{j} \frac{(N-m)_{\ell}(m-i)_{j-\ell}}{(N-i)_j}\, \cdot \, \binom{k-\ell}{k-j}\, X_{\ell}(B_i)\, .
\]
In order to do so we must bound the magnitude of the increment
\[
Y_i\, :=\,  \sum_{\ell=1}^{j} \frac{(N-m)_{\ell}(m-i)_{j-\ell}}{(N-i)_j}\, \cdot \, \binom{k-\ell}{k-j}\, X_{\ell}(B_i)
\]
of the martingale.  We observe that the first fraction is always at most $1$, and so the coefficient itself is $O_k(1)$.  Recalling that $X_{\ell}(B_i):= A_{\ell}(B_i)-\Ex{A_{\ell}(B_i)|B_{i-1}}$, and that both $A_{\ell}(B_i)$ and $\Ex{A_{\ell}(B_i)|B_{i-1}}$ are non-negative we have
\[
\|X_{\ell}(B_i)\|_{\infty}\, \le\, \|A_{\ell}(B_i)\|_{\infty} \, \le\, O_k(1)\, \cdot \, \Delta\qquad a.s.
\]
where the second inequality follows since $A_{\ell}(B_i)=N_{\ell}(B_i)-N_{\ell}(B_{i-1})$ is certainly at most $\binom{k-1}{\ell-1}=O_k(1)$ times $d_{\HH}(b_i)\le \Delta$ (any ``new'' $\ell$-sets must be in edges containing $b_i$).

Since $Y_i$ consists of a finite number of terms and the coefficients are $O_k(1)$,
\[
|Y_i|\, \le\, O_k(1)\, \cdot\, \Delta \qquad a.s.
\]
By an application of the Azuma--Hoeffding inequality to $D_{j}(B_m)=\sum_{i=1}^{m}Y_i$, we have that
\[
\pr{D_j(B_m)\, >\, a}\, \le\, \exp\left(\frac{-a^2}{2m \, O_k(1)\, \Delta^2}\right)\, =\, \exp\left(\frac{-\Omega_k(1) a^2}{m\Delta^2}\right)\, .
\]
This complete the proof of the base case $P_1$.

\subsection{P implies Q}\label{sec:PtoQ}

In this section, we shall prove that $P_r\, \Rightarrow\, Q_r$. Let us fix $k\ge \ell\ge r+1$, $\eta\in [0,3^{-r}]$ and a $k$-uniform hypergraph $\HH$ on $[N]$ which is $(r,\eta)$-near-regular.  Let $\Delta_{r+1}$ be the maximum $(r+1)$-degree of $\HH$.  Let us also fix $0\le i\le N$ and $\alpha\ge D_r\eta s^{\ell -1}h/N$.  In order to prove $Q_r$ we must prove that
\begin{equation}\label{eq:qis}
\pr{|X_{\ell}(B_i)|\, >\, \alpha}\, \le\, N^{O_k(1)}\, \exp\left(\frac{-\Omega_k(1) \alpha^{2/r}}{i\Delta_{r+1}^{2/r}}\right)\, .
\end{equation}

We recall that $X_{\ell}(B_i)=A_{\ell}(B_i)-\Ex{A_{\ell}(B_i)|B_{i-1}}$.  Our proof of~\eqref{eq:qis} is based on the following proposition on the deviation of $A_{\ell}(B_i)$ from its mean.  We set
\[
\lambda_{\ell}(i)\, :=\, \Ex{A_{\ell}(B_i)}\, =\, \frac{\ell  \binom{k}{\ell} h (i-1)_{\ell -1}}{(N)_{\ell}}\, .
\]
Note that $\lambda_{\ell}(B_i)$ is also equal to $L_{\ell}(i)-L_{\ell}(i-1)$.

\begin{prop}\label{prop:Q}
\[
\pr{\big|A_{\ell}(B_i)\, -\, \lambda_{\ell}(i)\big|\, >\, \alpha}\, \le \, N^{O_k(1)}\exp\left(\frac{-\Omega_k(1)\, \alpha^{2/r}}{i\Delta_{r+1}^{2/r}}\right)\, 
\]
for all $\alpha \ge D_r\eta s^{\ell -1}h/2N$.
\end{prop}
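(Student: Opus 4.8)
I would derive Proposition~\ref{prop:Q} from the inductive hypothesis $P_r$ applied to the link hypergraphs $\HH(x)$, $x\in[N]$. The first step is to rewrite $A_\ell(B_i)$ via a link hypergraph. If $b_i$ denotes the $i$-th element added, so $B_i=B_{i-1}\cup\{b_i\}$, then Pascal's identity gives, for each edge $f$, that $\binom{|f\cap B_i|}{\ell}-\binom{|f\cap B_{i-1}|}{\ell}$ equals $\binom{|f\cap B_{i-1}|}{\ell-1}$ if $b_i\in f$ and $0$ otherwise; summing over $f$ and using $b_i\notin B_{i-1}$ yields
\[
A_\ell(B_i)\,=\,\sum_{f\in E(\HH):\, b_i\in f}\binom{|f\cap B_{i-1}|}{\ell-1}\,=\,N_{\ell-1}^{\HH(b_i)}(B_{i-1}).
\]
Conditioning on $\{b_i=x\}$, the set $B_{i-1}$ is a uniformly random $(i-1)$-element subset of $[N]\setminus\{x\}$, so $A_\ell(B_i)=N_{\ell-1}^{\HH(x)}(B_{i-1})$ and hence $A_\ell(B_i)-L_{\ell-1}^{\HH(x)}(i-1)=D_{\ell-1}^{\HH(x)}(B_{i-1})$. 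Since the target bound is uniform in $x$, it suffices to bound this conditionally on $\{b_i=x\}$ for each $x$ and then average; the cases $i<\ell$ are trivial, as then $A_\ell(B_i)=0=\lambda_\ell(i)$, so we may assume $i\ge\ell\ge r+1$.

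Next I would apply $P_r$ to $\HH(x)$. By Lemma~\ref{lem:nearreg}(ii), the $(k-1)$-uniform hypergraph $\HH(x)$ is $(r-1,3\eta)$-near-regular with maximum $r$-degree at most $\Delta_{r+1}$; moreover $3\eta\le 3^{-(r-1)}$ since $\eta\le 3^{-r}$, and $\ell-1\ge r$ since $\ell\ge r+1$. Thus $P_r$ (applied with ground set $[N]\setminus\{x\}$, uniformity $k-1$, $j=\ell-1$, and in both tails, the lower tail being handled identically) gives
\[
\pr{\,|D_{\ell-1}^{\HH(x)}(B_{i-1})|>b\,}\,\le\,N^{O_k(1)}\exp\!\left(\frac{-\Omega_k(1)\,b^{2/r}}{(i-1)\,\Delta_{r+1}^{2/r}}\right)
\]
for all $b\ge C_r(3\eta)^{r/(r-1)}\,d_{\HH}(x)\,\big((i-1)/(N-1)\big)^{(\ell-2)r/(r-1)}$ (with no restriction on $b$ when $r=1$).

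I then need to absorb the gap between the conditional centering $L_{\ell-1}^{\HH(x)}(i-1)$ and the unconditional mean $\lambda_\ell(i)=\Ex{A_\ell(B_i)}$. A direct computation from~\eqref{eq:Lj}, using $\ell\binom{k}{\ell}=k\binom{k-1}{\ell-1}$, gives
\[
L_{\ell-1}^{\HH(x)}(i-1)-\lambda_\ell(i)\,=\,\binom{k-1}{\ell-1}\frac{(i-1)_{\ell-1}}{(N-1)_{\ell-1}}\Big(d_{\HH}(x)-\tfrac{hk}{N}\Big),
\]
and since $\HH$ is $(1,\eta)$-near-regular by Lemma~\ref{lem:nearreg}(i) we have $|d_{\HH}(x)-hk/N|\le\eta hk/N$; together with $(i-1)_{\ell-1}/(N-1)_{\ell-1}\le s^{\ell-1}$ this shows $|L_{\ell-1}^{\HH(x)}(i-1)-\lambda_\ell(i)|\le O_k(1)\,\eta s^{\ell-1}h/N$. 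Choosing $D_r$ large (recall $D_r/C_r=(10k!)^{10}$), this is at most $\alpha/2$ whenever $\alpha\ge D_r\eta s^{\ell-1}h/2N$, so on $\{b_i=x\}$ the event $\{|A_\ell(B_i)-\lambda_\ell(i)|>\alpha\}$ is contained in $\{|D_{\ell-1}^{\HH(x)}(B_{i-1})|>\alpha/2\}$, and we set $b=\alpha/2$ above.

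The last step, and the only one requiring care, is checking that $b=\alpha/2$ really satisfies the admissibility threshold of $P_r$. Using $\eta^{r/(r-1)}\le\eta$ (as $\eta\le1$), $d_{\HH}(x)\le 2hk/N$, $(i-1)/(N-1)\le s\le1$, and the elementary inequality $(\ell-2)r/(r-1)\ge\ell-1$ — equivalent to $\ell\ge r+1$, hence valid here — one bounds the right-hand side $C_r(3\eta)^{r/(r-1)}d_{\HH}(x)\big((i-1)/(N-1)\big)^{(\ell-2)r/(r-1)}$ by $O_k(1)\,C_r\,\eta s^{\ell-1}h/N$, which is below $\alpha/2$ once $D_r\gg_k C_r$; for $r=1$ there is nothing to check since $P_1$ holds for all $b>0$. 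Finally, since $i-1\ge i/2$ and $b=\alpha/2$, we have $b^{2/r}/(i-1)\ge\Omega_k(1)\,\alpha^{2/r}/i$, so the exponent may be rewritten in terms of $\alpha^{2/r}/(i\Delta_{r+1}^{2/r})$ at the cost of the constant $\Omega_k(1)$; replacing $(N-1)^{O_k(1)}$ by $N^{O_k(1)}$ and averaging the conditional bound over $x\in[N]$ yields Proposition~\ref{prop:Q}. The main obstacle is purely this constant-chasing: ensuring $D_r$ dominates both $C_r$ and the combinatorial factors $O_k(1)$ arising from the mean-shift and from $3^{r/(r-1)}$.
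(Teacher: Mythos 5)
Your proposal is correct and matches the paper's approach essentially step by step: you express $A_\ell(B_i)$ as $N_{\ell-1}^{\HH(b_i)}(B_{i-1})$ via the link hypergraph (the paper's Lemma~\ref{lem:claim}), condition on $b_i=x$, apply $P_r$ to $\HH(x)$ using Lemma~\ref{lem:nearreg}(ii) (the paper's Lemma~\ref{lem:fromP}), bound the mean-shift $|L_{\ell-1}^{\HH(x)}(i-1)-\lambda_\ell(i)|\le O_k(1)\eta s^{\ell-1}h/N$ by $\alpha/2$, and verify the admissibility threshold of $P_r$ using $(\ell-2)r/(r-1)\ge\ell-1$ (your direction of this inequality is the correct one; the paper's displayed inequality $(\ell-2)r\le(\ell-1)(r-1)$ has the sign reversed but is used in the correct direction). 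The only cosmetic difference is that you inline the two lemmas' content rather than stating them separately.
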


Let us first observe that~\eqref{eq:qis} follows from Proposition~\ref{prop:Q}.  Since $X_{\ell}(B_i)=A_{\ell}(B_i)-\Ex{A_{\ell}(B_i)|B_{i-1}}$, the event $|X_{\ell}(B_i)|\, >\, \alpha$ of~\eqref{eq:qis} may only occur if either $|A_{\ell}(B_i) -\, \lambda_{\ell}(i)|>\alpha/2$ or $|\Ex{A_{\ell}(B_i)|B_{i-1}} -\, \lambda_{\ell}(i)|>\alpha/2$, by the triangle inequality.  The first of these events has probability at most 
\[
N^{O_k(1)}\exp\left(\frac{-\Omega_k(1)\, \alpha^{2/r}}{i\Delta_{r+1}^{2/r}}\right)
\]
by Proposition~\ref{prop:Q}.  The second may only occur if there exists $x\in [N]\setminus B_{i-1}$ such that $|A_{\ell}(B_{i-1}\cup \{x\}) -\, \lambda_{\ell}(i)|>\alpha/2$ and so the same bound holds by Lemma~\ref{lem:pN} and Proposition~\ref{prop:Q}.  This completes the proof of~\eqref{eq:qis}.

All that remains is to prove Proposition~\ref{prop:Q}.  We shall base the proof of Proposition~\ref{prop:Q} on Lemma~\ref{lem:claim}, which shows how we may view $A_{\ell}$ in terms a deviation in a hypergraph $\HH(x)$, and Lemma~\ref{lem:fromP} which uses $P_r$ to bound the probability of such deviations.

It will be useful to condition on the element $b_i=x$ that is added as the $i$th element of the process.  Given that $b_i=x$ the set $B_i$ is distributed as
\[
B_i\, =\, B_{i-1}^{(x)}\, \cup \{x\}
\]
where $B_{i-1}^{(x)}$ is a uniformly random subset of $i-1$ elements of $[N]\setminus \{x\}$.  We also recall that we define the hypergraph
\[
\HH(x)\, :=\, \{f\setminus \{x\}\, :\, f\in E(\HH)\, , \, x\in f\}\, .
\]
The first lemma shows that $A_{\ell}(B^{(x)}_{i-1}\cup\{x\})$ may be expressed precisely in terms of the deviation of $(\ell-1)$ sets in the hypergraph $\HH(x)$

\begin{lemma}\label{lem:claim}  For each $x\in [N]$ 
\begin{align*}
A_{\ell}(B^{(x)}_{i-1}\cup\{x\})\, &=\, D_{\ell-1}^{\HH(x)}(B^{(x)}_{i-1}) \, +\,  \Ex{N_{\ell-1}^{\HH(x)}(B^{(x)}_{i-1})}\\
& =\, D_{\ell-1}^{\HH(x)}(B^{(x)}_{i-1}) \, +\, \lambda_{\ell}(i)\, \pm \, \frac{\eta\ell\binom{k}{\ell}h s^{\ell-1}}{N} \, .
\end{align*}
\end{lemma}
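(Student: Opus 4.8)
The plan is to establish the two equalities of the statement in turn, the first being a combinatorial identity and the second an elementary computation with the formula \eqref{eq:Lj}.

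For the first equality, fix $x\in[N]$ and write $B:=B^{(x)}_{i-1}\ssq [N]\setminus\{x\}$. By definition $A_{\ell}(B\cup\{x\})=N_{\ell}(B\cup\{x\})-N_{\ell}(B)=\sum_{f\in E(\HH)}\big(\binom{|f\cap(B\cup\{x\})|}{\ell}-\binom{|f\cap B|}{\ell}\big)$. Since $x\notin B$, the $f$-th summand vanishes unless $x\in f$, in which case $|f\cap(B\cup\{x\})|=|f\cap B|+1$ and the summand equals $\binom{|f\cap B|}{\ell-1}=\binom{|(f\setminus\{x\})\cap B|}{\ell-1}$. Summing over $f\ni x$ and recalling that the multiset of edges of $\HH(x)$ is exactly $\{f\setminus\{x\}:f\in E(\HH),\,x\in f\}$, this sum is precisely $N^{\HH(x)}_{\ell-1}(B)$. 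Rewriting $N^{\HH(x)}_{\ell-1}(B)=D^{\HH(x)}_{\ell-1}(B)+\Ex{N^{\HH(x)}_{\ell-1}(B)}$ is just the definition of the deviation, which gives the first displayed equality. (The only point needing care is the bookkeeping of edge multiplicities in $\HH(x)$ together with the fact that $B$ avoids $x$, so that every ``new'' $\ell$-subset of an edge arises by appending $x$ to an $(\ell-1)$-subset of an edge already inside $B$; I regard this as the main, though minor, obstacle.)

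For the second equality I would apply \eqref{eq:Lj} to the $(k-1)$-uniform hypergraph $\HH(x)$ on the $(N-1)$-element vertex set $[N]\setminus\{x\}$, which has $d_{\HH}(x)$ edges counted with multiplicity, and to the uniformly random $(i-1)$-subset $B^{(x)}_{i-1}$ of that set; this yields $\Ex{N^{\HH(x)}_{\ell-1}(B^{(x)}_{i-1})}=d_{\HH}(x)\binom{k-1}{\ell-1}\frac{(i-1)_{\ell-1}}{(N-1)_{\ell-1}}$. On the other hand $\lambda_{\ell}(i)=\frac{\ell\binom{k}{\ell}h(i-1)_{\ell-1}}{(N)_{\ell}}$, and using $\ell\binom{k}{\ell}=k\binom{k-1}{\ell-1}$ and $(N)_{\ell}=N(N-1)_{\ell-1}$ this rewrites as $\lambda_{\ell}(i)=\frac{kh}{N}\binom{k-1}{\ell-1}\frac{(i-1)_{\ell-1}}{(N-1)_{\ell-1}}$. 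Hence the difference between the two quantities equals $\big(d_{\HH}(x)-\tfrac{kh}{N}\big)\binom{k-1}{\ell-1}\frac{(i-1)_{\ell-1}}{(N-1)_{\ell-1}}$.

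It then remains to bound this difference. Since $\HH$ is $(r,\eta)$-near-regular with $r\ge1$, repeated use of Lemma~\ref{lem:nearreg}(i) shows $\HH$ is $(1,\eta)$-near-regular, so $d_{\HH}(x)=(1\pm\eta)\tfrac{kh}{N}$ and $|d_{\HH}(x)-\tfrac{kh}{N}|\le\eta\tfrac{kh}{N}$. Combined with the elementary bounds $\frac{(i-1)_{\ell-1}}{(N-1)_{\ell-1}}\le\big(\frac{i-1}{N-1}\big)^{\ell-1}\le s^{\ell-1}$ (valid since $i\le N$, each factor $\frac{i-1-j}{N-1-j}$ being non-increasing in $j$) and $k\binom{k-1}{\ell-1}=\ell\binom{k}{\ell}$, this gives $\big|\Ex{N^{\HH(x)}_{\ell-1}(B^{(x)}_{i-1})}-\lambda_{\ell}(i)\big|\le\frac{\eta\ell\binom{k}{\ell}hs^{\ell-1}}{N}$, which is exactly the error term claimed, completing the proof.
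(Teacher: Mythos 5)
Your proof is correct and follows essentially the same route as the paper: the first equality is shown by isolating the contribution of edges through $x$ (you do it via Pascal's identity on binomials, the paper via counting pairs $(S,f)$ with $S\cup\{x\}\subseteq f$, which are equivalent viewpoints), and the second is the same computation via \eqref{eq:Lj} applied to $\HH(x)$, the $(1,\eta)$-near-regularity estimate $d_{\HH}(x)=(1\pm\eta)kh/N$, and the bound $(i-1)_{\ell-1}/(N-1)_{\ell-1}\le s^{\ell-1}$.
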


We use $P_r$ to provide a bound on the probability that $D_{\ell-1}^{\HH(x)}(B^{(x)}_{i-1})$ is large.

\begin{lemma}\label{lem:fromP} Let $r+1\le \ell\le k$.  Then
\[
\pr{\big|D_{\ell-1}^{\HH(x)}(B^{(x)}_{i-1})\big|\, >\, \alpha}\, \le \, N^{O_k(1)} \exp\left(\frac{-\Omega_k(1)\, \alpha^{2/r}}{i\Delta_{r+1}^{2/r}}\right)\,  
\]
for all $\alpha \ge C_r (3\eta)^{r/(r-1)}e(\HH(x))s^{(\ell-2)r/(r-1)}$.  In particular, the result holds for all $\alpha\ge D_r\eta s^{\ell -1}h/4 N$. 
\end{lemma}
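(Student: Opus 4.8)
The plan is to deduce the lemma directly from the statement $P_r$ applied to the hypergraph $\HH(x)$. First I would record the structural facts about $\HH(x)$ that allow $P_r$ to be invoked. By hypothesis $\HH$ is $(r,\eta)$-near-regular; applying Lemma~\ref{lem:nearreg}(ii) tells us that $\HH(x)$ is a $(k-1)$-uniform hypergraph that is $(r-1,3\eta)$-near-regular with maximum $r$-degree at most $\Delta_{r+1}$. Since $\eta\in[0,3^{-r}]$ we have $3\eta\in[0,3^{-(r-1)+1}]=[0,3^{-r+1}]$, so the near-regularity parameter for $\HH(x)$ lies in the range permitted by $P_r$. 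We also have $\ell-1\ge r$ (because $\ell\ge r+1$) and $\ell-1\le k-1$, so with $j:=\ell-1$ and the ground hypergraph $\HH(x)$ we are squarely inside the scope of statement $P_r$.

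Next I would apply $P_r$ with these parameters. The density of $B^{(x)}_{i-1}$ inside $V(\HH(x))=[N]\setminus\{x\}$ is $(i-1)/(N-1)$, which is at most $s=i/N$ up to a harmless factor (one may absorb this into the $\Omega_k(1)$/$N^{O_k(1)}$ or simply note $(i-1)/(N-1)\le i/N$ for $i\le N$); I would use $s$ throughout for cleanliness. Statement $P_r$ then gives, writing $h':=e(\HH(x))$, $\Delta':=\Delta_r(\HH(x))\le \Delta_{r+1}$, $\eta':=3\eta$, $j':=\ell-1$,
\[
\pr{D^{\HH(x)}_{\ell-1}(B^{(x)}_{i-1})>\alpha}\ \le\ N^{O_k(1)}\exp\!\left(\frac{-\Omega_k(1)\,\alpha^{2/r}}{(i-1)\,\Delta_{r+1}^{2/r}}\right)
\]
for all $\alpha\ge C_r (3\eta)^{r/(r-1)} h' \big((i-1)/(N-1)\big)^{(\ell-2)r/(r-1)}$, and symmetrically for the lower tail; combining the two tails by a union bound (and replacing $i-1$ by $i$ in the exponent, which only helps) yields the two-sided bound claimed, with the threshold $\alpha\ge C_r(3\eta)^{r/(r-1)}h' s^{(\ell-2)r/(r-1)}$.

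Finally I would verify the ``in particular'' clause: that the stated threshold is implied by $\alpha\ge D_r\eta s^{\ell-1}h/(4N)$. Here I would use $h'=e(\HH(x))=d_{\HH}(x)\le \Delta_1\le$ (something like) $kh/N$ up to a constant — indeed the average degree of $\HH$ is $kh/N$ and $(1,\eta)$-near-regularity gives $d_\HH(x)\le (1+\eta)kh/N\le 2kh/N$ — so $C_r(3\eta)^{r/(r-1)}h' s^{(\ell-2)r/(r-1)}\le C_r 3^{r/(r-1)}\cdot 2k\cdot \eta^{r/(r-1)}\,(h/N)\,s^{(\ell-2)r/(r-1)}$. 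It then suffices to check that $\eta^{r/(r-1)}s^{(\ell-2)r/(r-1)}\le \eta\, s^{\ell-1}$ up to the constant slack between $C_r 3^{r/(r-1)}\cdot 2k$ and $D_r/4$; since $\eta\le 3^{-r}\le 1$ we have $\eta^{r/(r-1)}\le \eta$, and since $s\le 1$ and $(\ell-2)r/(r-1)\ge \ell-1$ exactly when $\ell\ge r$ (which holds as $\ell\ge r+1$) we have $s^{(\ell-2)r/(r-1)}\le s^{\ell-1}$; and $D_r=(10k!)^{10^r+10}$ was chosen precisely so that $D_r/4$ dominates $C_r 3^{r/(r-1)}\cdot 2k=(10k!)^{10^r}\cdot 3^{r/(r-1)}\cdot 2k$.

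I do not anticipate a serious obstacle here: the content of the lemma is essentially bookkeeping, matching parameters so that $P_r$ (the genuine inductive hypothesis) applies to $\HH(x)$ and then simplifying the threshold. The one point requiring care is the comparison of exponents $(\ell-2)r/(r-1)$ versus $\ell-1$ and the bookkeeping of the constants $C_r$ versus $D_r$, together with the bound $e(\HH(x))=d_\HH(x)=O_k(h/N)$ coming from $1$-near-regularity of $\HH$; these are routine and the margins in the definitions of $C_r,D_r$ are deliberately generous.
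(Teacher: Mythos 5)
Your proposal is correct and follows essentially the same route as the paper's proof: apply Lemma~\ref{lem:nearreg}(ii) to see that $\HH(x)$ is $(k-1)$-uniform, $(r-1,3\eta)$-near-regular with maximum $r$-degree at most $\Delta_{r+1}$; invoke $P_r$ with $j=\ell-1$; and then verify the ``in particular'' clause by comparing the threshold $C_r(3\eta)^{r/(r-1)}e(\HH(x))s^{(\ell-2)r/(r-1)}$ with $D_r\eta s^{\ell-1}h/4N$ using $e(\HH(x))\le 2hk/N$, $\eta^{r/(r-1)}\le\eta$, $s^{(\ell-2)r/(r-1)}\le s^{\ell-1}$ and the choice $D_r=(10k!)^{10}C_r$. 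You are, if anything, slightly more careful than the paper about the density $(i-1)/(N-1)$ versus $s=i/N$ and about splitting the two-sided tail; one small slip is your parenthetical ``exactly when $\ell\ge r$'' — the inequality $(\ell-2)r/(r-1)\ge\ell-1$ holds exactly when $\ell\ge r+1$, but since the hypothesis gives $\ell\ge r+1$ this does not affect the argument.
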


Let us see how Proposition~\ref{prop:Q} follows from these lemmas.

\begin{proof}[Proof of Proposition~\ref{prop:Q}] By Lemma~\ref{lem:claim} we have that
\[
\big|A_{\ell}(B_i)\, -\, \lambda_{\ell}(i)\big|\, \le \, \big|D_{\ell-1}^{\HH(x)}(B^{(x)}_{i-1})\big|\, +\,  \frac{\eta\ell\binom{k}{\ell}h s^{\ell-1}}{N} \, .
\]
Since the second term on the right is at most $\alpha/2$ (this follows from the condition on $\alpha$ and the fact that $D_r\ge 4\ell\binom{k}{l}$), the event that $\big|A_{\ell}(B_i)\, -\, \lambda_{\ell}(i)\big|> \alpha$ is contained in the event that $|D_{\ell-1}^{\HH(x)}(B^{(x)}_{i-1})|>\alpha/2$.  The required bound now follows immediately from Lemma~\ref{lem:fromP}.
\end{proof}

All that remains is to prove Lemma~\ref{lem:claim} and Lemma~\ref{lem:fromP}.  We begin with Lemma~\ref{lem:claim}.

\begin{proof}[Proof of Lemma~\ref{lem:claim}] Recall that 
\[
A_{\ell}(B^{(x)}_{i-1}\cup\{x\})\, :=\, N_{\ell}(B^{(x)}_{i-1}\cup\{x\})\, -\, N_{\ell}(B^{(x)}_{i-1})
\]
which is precisely the number of pairs $(S,f)$ where $S\subseteq B^{(x)}_{i-1}$ is a subset of $\ell - 1$ elements, and $f$ is an edge of $\HH$ such that $S \cup \{x\} \subseteq f$.  Setting $f^{-}=f\setminus \{x\}$ we note that this condition is equivalent to the fact that $S \subseteq f^{-}$.  It follows that $A_{\ell}(B^{(x)}_{i-1}\cup\{x\})$ is precisely $N_{\ell-1}^{\HH(x)}(B^{(x)}_{i-1})$, and so
\[
A_{\ell}(B^{(x)}_{i-1}\cup\{x\})\, =\, D_{\ell-1}^{\HH(x)}(B^{(x)}_{i-1}) \, +\,  \Ex{N_{\ell-1}^{\HH(x)}(B^{(x)}_{i-1})}
\]
by the definition of $D_{\ell-1}^{\HH(x)}(B^{(x)}_{i-1})$ as the deviation of $N_{\ell-1}^{\HH(x)}(B^{(x)}_{i-1})$ from its mean.  All that remains is to prove that 
\[
\Ex{N_{\ell-1}^{\HH(x)}(B^{(x)}_{i-1})}\, =\, \lambda_{\ell}(i)\, \pm \, \frac{\eta\ell\binom{k}{\ell}s^{\ell-1}h}{N} \, .
\]
Since $\HH$ is $(r,\eta)$-near-regular (and so $(1,\eta)$-near-regular by Lemma~\ref{lem:nearreg}) we have $e(\HH(x))=d_{\HH}(x)=(1 \pm \eta) hk/N$.  We may also observe that $\HH(x)$ is $(k-1)$-uniform on $N-1$ vertices and so
\begin{align*}
\Ex{N_{\ell-1}^{\HH(x)}(B^{(x)}_{i-1})}\, &=\, \frac{e(\HH(x))\binom{k-1}{\ell-1} (i-1)_{\ell-1}}{(N-1)_{\ell-1}}\\
&=\, \frac{\ell\binom{k}{\ell}h (i-1)_{\ell-1}}{(N)_{\ell}} \, \pm \, \frac{\eta\ell\binom{k}{\ell}h(i-1)_{\ell-1}}{(N)_{\ell}} \\
&=\,  \lambda_{\ell}(i)\, \pm  \, \frac{\eta\ell\binom{k}{\ell}s^{\ell-1}h}{N} \, ,
\end{align*}
as required.
\end{proof}

\begin{proof}[Proof of Lemma~\ref{lem:fromP}]
We prove the required bound by applying the inequality given by $P_r$ to the hypergraph $\HH(x)$.  We observe that $\HH(x)$ is a $(k-1)$-uniform hypergraph on $N-1$ vertices.  We may also observe that $\HH(x)$ inherits the regularity condition $(r-1,3\eta)$-near-regular from $\HH$ by Lemma~\ref{lem:nearreg} and the maximum $r$-degree of $\HH(x)$ is at most $\Delta_{r+1}$.  By $P_r$ we have that
\[
\pr{\big|D_{\ell-1}^{\HH(x)}(B^{-}_{i-1})\big|\, >\, \alpha}\, \le \, N^{O_k(1)} \exp\left(\frac{-\Omega_k(1)\, \alpha^{2/r}}{i\Delta_{r+1}^{2/r}}\right)\,  
\]
for all $\alpha\ge C_r (3\eta)^{r/(r-1)}e(\HH(x))s^{(\ell-2)r/(r-1)}$.  This is exactly the result we need.  All that remains is to verify that this includes all $\alpha\ge D_r\eta s^{\ell -1}h/4 N$. 

We have that $D_r\ge 10k \cdot 3^{r/(r-1)}C_r$, $\eta<1$, $(\ell-2)r\le (\ell-1)(r-1)$ and $e(\HH(x))\le (1+\eta)hk/N\le 2hk/N$.  It follows that
\begin{align*}
\frac{D_r\eta h s^{\ell -1}}{4N}\, &\ge \, \frac{10C_r (3\eta)^{r/(r-1)}hk s^{(\ell -2)r/(r-1)}}{4N}\\
&\ge \, C_r(3\eta)^{r/(r-1)}e(\HH(x))s^{(\ell -2)r/(r-1)}\, .
\end{align*}
This confirms that the inequality holds across the whole of the range we claimed.
\end{proof}

\subsection{Q implies P}\label{sec:QtoP}

In this section, we will prove that $Q_r\, \Rightarrow\, P_{r+1}$. The main idea of the induction step is using our information about the magnitude of increments combined with Lemma ~\ref{lem:HAv}. Let us fix $k \geq j \geq r+1$, $\eta \in [0,3^{-r}]$ and a $k$-uniform $(r,\eta)$-near-regular hypergraph $\HH$ on $[N]$ with maximum $(r+1)$-degree $\Delta_{r+1}$. We recall the martingale representation
\[
D_j(B_m) = \sum_{i=1}^m Y_i,
\]
where
\[
Y_i\, =\, \sum_{\ell=1}^{j} \frac{(N-m)_{\ell}(m-i)_{j-\ell}}{(N-i)_j}\, \binom{k-\ell}{k-j}\, \, X_{\ell}(B_i)\, .
\]
We prove now an auxiliary lemma that controls the probability that the increments are large.
\begin{lemma} \label{lem:probinc}
If $\alpha \geq j k! D_r \eta t^{j-1}h/N$, then
\[
\pr{|Y_i|\, >\, \alpha}\, \le \, N^{O_k(1)}\exp\left(\frac{-\Omega_k(1) \alpha^{2/r}}{m\Delta_{r+1}^{2/r}}\right) .
\]
\end{lemma}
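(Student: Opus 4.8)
The plan is to bound $|Y_i|$ by splitting the sum defining $Y_i$ into the ``small'' terms $1\le \ell\le r$, which we control deterministically via Lemma~\ref{lem:deterministic}, and the ``large'' terms $r+1\le \ell\le j$, whose deviations we control probabilistically via the hypothesis $Q_r$. Recall that each coefficient $\frac{(N-m)_\ell(m-i)_{j-\ell}}{(N-i)_j}\binom{k-\ell}{k-j}$ is bounded above by $O_k(1)$ (the falling-factorial fraction is at most $1$, and the binomial coefficient is at most $2^k$), so it suffices to control each $|X_\ell(B_i)|$.

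First I would handle the deterministic part. For $1\le \ell\le r$, since $\HH$ is $(r,\eta)$-near-regular, Lemma~\ref{lem:deterministic} gives $|X_\ell(B_i)|\le 2\ell\binom{k}{\ell}\eta s^{\ell-1}h/N\le 2k\,k!\,\eta t^{\ell-1}h/N$ almost surely (using $s\le t$ when $i\le m$; if $i>m$ the term is anyway absent since $(m-i)_{j-\ell}=0$ for $\ell<j$, and one checks the $\ell=j$ case separately). Summing the at most $r\le k$ such terms against their $O_k(1)$ coefficients, the total contribution of the small-$\ell$ part to $|Y_i|$ is at most $O_k(1)\cdot k\cdot k!\,\eta t^{j-1}h/N$, which by the hypothesis $\alpha\ge jk!D_r\eta t^{j-1}h/N$ and $D_r\gg 1$ is at most $\alpha/2$. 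Hence on the event $\{|Y_i|>\alpha\}$ one of the large-$\ell$ terms, say the $\ell$-th, must have $|X_\ell(B_i)|$ exceeding $c_k\alpha$ for a suitable constant $c_k=\Omega_k(1)$ (absorbing the $O_k(1)$ coefficient bound and the factor $j\le k$ from the union bound over $\ell$).

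Next I would apply $Q_r$. For each $r+1\le \ell\le j\le k$ we have $c_k\alpha\ge c_k\cdot jk!D_r\eta t^{j-1}h/N\ge D_r\eta s^{\ell-1}h/N$ (using $s\le t$, $\ell-1\le j-1$, $\eta,t\le 1$, and $c_kjk!\ge1$), so the hypothesis of $Q_r$ is met with $i,\Delta_{r+1}$ as given, and
\[
\pr{|X_\ell(B_i)|>c_k\alpha}\,\le\,N^{O_k(1)}\exp\!\left(\frac{-\Omega_k(1)(c_k\alpha)^{2/r}}{i\Delta_{r+1}^{2/r}}\right)\,\le\,N^{O_k(1)}\exp\!\left(\frac{-\Omega_k(1)\alpha^{2/r}}{m\Delta_{r+1}^{2/r}}\right),
\]
where the last step uses $i\le m$ (for $i>m$ the increment $Y_i$ does not occur) and absorbs $c_k^{2/r}=\Omega_k(1)$ into the implicit constant. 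A union bound over the at most $k$ values of $\ell$ multiplies the bound by $k$, which is absorbed into $N^{O_k(1)}$, giving the claimed estimate. The only mildly delicate point — the main obstacle — is the bookkeeping in the edge case $i>m$ and the verification that every constant comparison ($D_r$ dominating the $O_k(1)$ coefficient bounds, $c_kjk!\ge1$, etc.) goes through with room to spare; these are routine given the generous choice $D_r=(10k!)^{10^r+10}$.
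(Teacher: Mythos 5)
There is a genuine gap in your handling of the deterministic (small-$\ell$) part, and it traces back to the first line of your plan where you bound each coefficient
\[
\frac{(N-m)_{\ell}(m-i)_{j-\ell}}{(N-i)_j}\binom{k-\ell}{k-j}
\]
merely by $O_k(1)$. The paper uses the sharper bound $t^{j-\ell}\,k!$ on this coefficient (essentially because $(m-i)_{j-\ell}/(N-i-\ell)_{j-\ell}\lesssim t^{j-\ell}$ and $(N-m)_{\ell}/(N-i)_{\ell}\le 1$), and that factor of $t^{j-\ell}$ is not cosmetic --- it is what makes the deterministic comparison work. With your $O_k(1)$ bound, Lemma~\ref{lem:deterministic} gives, for $1\le\ell\le r$,
\[
O_k(1)\cdot|X_\ell(B_i)|\;\le\;O_k(1)\,\eta\,t^{\ell-1}\frac{h}{N},
\]
and summing over $\ell\le r<j$ produces $O_k(1)\,\eta\,h/N$ in the worst case ($\ell=1$), \emph{not} $O_k(1)\,\eta\,t^{j-1}h/N$ as you assert. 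Since $t$ may be much smaller than $1$ and the lower bound on $\alpha$ is proportional to $t^{j-1}$, your claim that the small-$\ell$ contribution is at most $\alpha/2$ fails: for $t\to 0$ the deterministic bound on the $\ell=1$ term exceeds $\alpha$ by a factor of order $t^{-(j-1)}$, and no choice of $D_r$ can rescue it. The passage from $t^{\ell-1}$ to $t^{j-1}$ in your display is exactly where the argument breaks.

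The fix is what the paper does: keep the $t^{j-\ell}$ in the coefficient, so the term $\ell$ contributes $t^{j-\ell}\cdot O_k(1)\,\eta\,t^{\ell-1}h/N=O_k(1)\,\eta\,t^{j-1}h/N$, uniformly in $\ell$. Equivalently, instead of asking whether $|X_\ell(B_i)|$ itself is small, run a union bound at the level of $|X_\ell(B_i)|>\alpha t^{\ell-j}/(jk!)$; then Lemma~\ref{lem:deterministic} kills these events for $\ell\le r$ (the threshold $\alpha t^{\ell-j}/(jk!)\ge D_r\eta t^{\ell-1}h/N$ now dominates the deterministic bound), and $Q_r$ controls them for $\ell\ge r+1$ exactly as you describe. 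The probabilistic half of your plan is fine; it is only the deterministic half, via the lossy $O_k(1)$ coefficient bound, that needs repair.
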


\begin{proof}
We first observe that
\begin{equation}\label{eq:boundYX}
|Y_i| \, \leq \, \sum_{\ell = 1}^{j} k! t^{j-\ell} |X_{\ell}(B_i)|,
\end{equation}
since
\[
\frac{(N-m)_{\ell}(m-i)_{j-\ell}}{(N-i)_j}\, \binom{k-\ell}{k-j}\, \leq \, t^{j-l} k!.
\]
By an application of Lemma ~\ref{lem:deterministic} and $Q_r$, since $\alpha \geq j k! D_r \eta t^{j-1}h/N$ the following bound holds for all $1 \leq \ell \leq k$:

\begin{equation}\label{eq:probX}
\pr{|X_{\ell}(B_i)|\, >\, \frac{\alpha t^{\ell-j}}{jk!} }\, \le\, N^{O_k(1)}\, \exp\left(\frac{-\Omega_k(1) \alpha^{2/r}}{m\Delta_{r+1}^{2/r}}\right).
\end{equation}

Finally, using \eqref{eq:boundYX}, \eqref{eq:probX} and the union bound, we have
\[
\pr{|Y_i|\, >\, \alpha}\, \le \, N^{O_k(1)}\exp\left(\frac{-\Omega_k(1) \alpha^{2/r}}{m\Delta_{r+1}^{2/r}}\right) .
\]
\end{proof}

We are now ready to prove $P_{r+1}$. Let $a\ge C_{r+1} \eta^{(r+1)/r}ht^{(j-1)(r+1)/r}$. Choosing $\alpha = a^{r/(r+1)}\Delta_{r+1}^{1/(r+1)}$, we can easily verify that
\[
\alpha \, \geq \, j k! D_r \eta t^{j-1}h/N,
\]
using $\Delta_{r+1} \geq h/(N^{r+1})$ and $C_{r+1} \geq k^2 (k!)^2 D_r^2$. By an application of the Azuma--Hoeffding inequality (the version given in Lemma ~\ref{lem:HAv}) with $c_i = \alpha$ for every $i$, we have
\[
\pr{D_j(B_m) > a} \, \leq \, \exp \left( \frac{-a^2}{2m \alpha^2} \right) + N \sum_{i=1}^m \pr{|Y_i| > \alpha} .
\]
By Lemma ~\ref{lem:probinc}, we obtain
\[
\pr{D_j(B_m) > a} \, \leq \, \exp \left( \frac{-a^2}{2m \alpha^2} \right) + N^{O_k(1)}\exp\left(\frac{-\Omega_k(1) \alpha^{2/r}}{m\Delta_{r+1}^{2/r}}\right).
\]
Since $\alpha = a^{r/(r+1)}\Delta_{r+1}^{1/(r+1)}$, this last inequality gives us
\[
\pr{D_j(B_m) > a} \, \leq \, N^{O_k(1)}\exp\left(\frac{-\Omega_k(1)\, a^{2/(r+1)}}{m\Delta_{r+1}^{2/(r+1)}}\right),
\]
which establishes the induction step.

Now that we have established the base case $P_1$ and the implications $P_r\Rightarrow Q_r$ and $Q_r\Rightarrow P_{r+1}$ for all $r\ge 1$ we have completed the proof of Theorem~\ref{thm:nearreg}.

\subsection{A weaker condition if $\HH$ is regular and $(r-1,\eta)$-near-regular}\label{sec:regand}

We remarked (Remark~\ref{rem:regand}) in the Introduction that the condition on $a$ may be weakened to
\[
a\, \ge\, \left( 10k! \right)^{10^r}e(\HH)\left(\frac{\eta m^{k-2} }{N^{k-2}}\right)^{r/(r-2)}
\]
if $\HH$ is regular and $(r-1,\eta)$-near-regular, for $r\ge 3$.  

The proof is essentially identical to that given above.  We highlight only the differences.  

Let $\HH$ be regular and $(r,\eta)$-near-regular.  In this case the $\pm\frac{\eta\ell\binom{k}{\ell}s^{\ell-1}h}{N}$ term in Lemma~\ref{lem:claim} is not necessary.  It follows that in this case $Q_r$ holds for all $\alpha\ge C_r(3\eta)^{r/(r-1)} h s^{(\ell-2)r/(r-1)}$ (by a simple adaptation of the proof of Proposition~\ref{prop:Q}).

It is then possible to prove the result of Lemma~\ref{lem:probinc} for all $\alpha\ge 10 j(k+1)! C_r\eta^{r/(r-1)} ht^{(j-2)r/(r-1)}/N$.  Following the rest of the proof of Section~\ref{sec:QtoP} we obtain the required result for all $a\, \ge\, C_{r+1} \eta^{(r+1)/(r-1)}h t^{(j-2)(r+1)/(r-1)}$.  Taking $j=k$ and swapping $r$ for $r-1$ we obtain the claimed result, which we now state as a proposition.

\begin{prop}\label{prop:regand}  Let $1\le r\le k$ and let $\eta\in [0,3^{-r+1}]$.
Let $\HH$ be a $k$-uniform hypergraph on $[N]$.  Suppose that $\HH$ is regular and $(r-1,\eta)$-near-regular with maximum $r$-degree $\Delta_r$.  Then
\[
\pr{|D^{\HH}(B_m)|>a}\, \le\, N^{O_k(1)}\, \exp\left(\frac{-\Omega_k(1) a^{2/r}}{m\Delta_{r}^{2/r}}\right)
\]
for all
\[
a\, \ge\, \left( 10k! \right)^{10^r}e(\HH)\left(\frac{\eta m^{k-2} }{N^{k-2}}\right)^{r/(r-2)}
\]
\end{prop}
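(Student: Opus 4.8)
The plan is to retrace the proof of Theorem~\ref{thm:nearreg} essentially verbatim, keeping track of the single place where the extra hypothesis that $\HH$ is \emph{exactly} regular makes a difference. Recall that that proof runs through the families $P_r$ and $Q_r$, with $P_1$ as base case, $P_r\Rightarrow Q_r$ (Section~\ref{sec:PtoQ}) and $Q_r\Rightarrow P_{r+1}$ (Section~\ref{sec:QtoP}). The only use of near-regularity that is \emph{loose} when $\HH$ happens to be regular occurs in Lemma~\ref{lem:claim}: the identity $A_\ell(B^{(x)}_{i-1}\cup\{x\})=D_{\ell-1}^{\HH(x)}(B^{(x)}_{i-1})+\Ex{N_{\ell-1}^{\HH(x)}(B^{(x)}_{i-1})}$ is exact, but the expectation is only pinned to $\lambda_\ell(i)\pm\frac{\eta\ell\binom{k}{\ell}hs^{\ell-1}}{N}$ because $e(\HH(x))=d_{\HH}(x)=(1\pm\eta)hk/N$. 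When $\HH$ is regular we have $d_{\HH}(x)=hk/N$ on the nose, so $\Ex{N_{\ell-1}^{\HH(x)}(B^{(x)}_{i-1})}=\lambda_\ell(i)$ exactly, the $\pm\frac{\eta\ell\binom{k}{\ell}hs^{\ell-1}}{N}$ term disappears, and moreover $X^{\HH}_1(B_i)\equiv 0$.

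First I would re-run Section~\ref{sec:PtoQ} with this simplification. Proposition~\ref{prop:Q}, and hence $Q_r$, then holds already for $\alpha\ge C_r(3\eta)^{r/(r-1)}\,e(\HH(x))\,s^{(\ell-2)r/(r-1)}\le C_r(3\eta)^{r/(r-1)}h\,s^{(\ell-2)r/(r-1)}$ -- that is, with the $\eta s^{\ell-1}h/N$ contribution to the threshold removed, which is exactly what Lemma~\ref{lem:fromP} alone supplies. Note that $\HH(x)$ is still only $(r-1,3\eta)$-near-regular (Lemma~\ref{lem:nearreg}), \emph{not} regular, so at this step one feeds $\HH(x)$ into the \emph{already-established} $P_{r-1}$; there is no ``regular $P_r$'' to invoke, so this is not a fresh double induction but a modification of the final two links in the chain.

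Next I would push the sharpened $Q_r$ through Section~\ref{sec:QtoP}. In the analogue of Lemma~\ref{lem:probinc} the increments $X^{\HH}_\ell(B_i)$ with $\ell\le r$ are still bounded deterministically by Lemma~\ref{lem:deterministic} (no improvement under regularity beyond the vanishing of the $\ell=1$ term), while those with $\ell\ge r+1$ are controlled by the sharpened $Q_r$; recomputing the threshold as in the proof of Lemma~\ref{lem:probinc} yields its analogue valid for $\alpha\ge 10\,j(k+1)!\,C_r\,\eta^{r/(r-1)}h\,t^{(j-2)r/(r-1)}/N$. Feeding this into the Azuma--Hoeffding step of Section~\ref{sec:QtoP} with the same choice $\alpha=a^{r/(r+1)}\Delta_{r+1}^{1/(r+1)}$ and the inequality $\Delta_{r+1}\ge h/N^{r+1}$ then gives the deviation bound for all $a\ge C_{r+1}\,\eta^{(r+1)/(r-1)}h\,t^{(j-2)(r+1)/(r-1)}$, once the constants are chosen so that $C_{r+1}\ge(10\,j(k+1)!\,C_r)^{(r+1)/r}$ (of the same flavour as $C_{r+1}\ge k^2(k!)^2D_r^2$ in the original). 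Finally, taking $j=k$, so that $t^{(k-2)(r+1)/(r-1)}=(m^{k-2}/N^{k-2})^{(r+1)/(r-1)}$ and $h=e(\HH)$, and relabelling $r\mapsto r-1$ -- which forces $r-1\ge 2$, i.e.\ $r\ge 3$, for the exponent $r/(r-2)$ to make sense -- produces exactly the statement of Proposition~\ref{prop:regand}.

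The only real obstacle is the bookkeeping. One must check that the sharpened threshold in the modified Lemma~\ref{lem:probinc} is genuinely met by $\alpha=a^{r/(r+1)}\Delta_{r+1}^{1/(r+1)}$ for every admissible $a$ and every $1\le i\le m$, so that the maximum over $\ell$ of the per-$\ell$ thresholds -- with the $\ell=1$ contribution now gone, the surviving deterministic contributions for $2\le\ell\le r$, and the sharpened tail contributions for $\ell\ge r+1$ -- comes out at the claimed size; and one must check that the constraint $\eta\le 3^{-r+1}$ (equivalently $3\eta\le 3^{-r+2}$ after the single passage to $\HH(x)$) keeps everything inside the admissible ranges of the instances of $P_{r-1}$, $Q_r$ and Lemma~\ref{lem:probinc} being invoked. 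None of this needs a new idea: it is the computation of Sections~\ref{sec:PtoQ} and~\ref{sec:QtoP} run again with one error term set to zero.
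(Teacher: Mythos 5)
Your proposal matches the paper's own proof sketch in Section~\ref{sec:regand} essentially verbatim: drop the error term of Lemma~\ref{lem:claim} under exact regularity, propagate the sharpened threshold through $Q_r$ and Lemma~\ref{lem:probinc}, and finish by taking $j=k$ and relabelling $r\mapsto r-1$. One small indexing slip: in the sketch's own indexing (where $\HH$ is $(r,\eta)$-near-regular), $\HH(x)$ is $(r-1,3\eta)$-near-regular and is fed into the \emph{non-regular} $P_r$ of Theorem~\ref{thm:nearreg}, not $P_{r-1}$; this does not affect the soundness of your plan.
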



\section{Deviations $D^{\HH}(B_p)$ -- Proof of Theorem~\ref{thm:pworld}}\label{sec:pworld}

Deviations $D(B_p)$ in the $p$-model are intimately related to deviations $D(B_m)$ in the $m$-model, via the identity
\eq{ptom}
\pr{D^{\HH}(B_p)\, >\, a}\, =\, \sum_{m=0}^{N}b_{N,p}(m) \, \pr{N^{\HH}(B_m)\, >\, p^{k}h\, +\, a}\, ,
\eqe
where $b_{N,p}(m):=\pr{\Bin(N,p)=m}$.  Recall that $L^{\HH}(p)=p^k h$ is the expected value of $N^{\HH}(B_p)$ and we study the probability that the deviation satisfies $D^{\HH}(B_p)>\delta_N L^{\HH}(p)$.

Our proof of Theorem~\ref{thm:pworld} consists of a lower bound (see Section~\ref{sec:LBpworld}) and an upper bound (see Section~\ref{sec:UBpworld}).  The lower bound is based on  on the fact that $\pr{N(B_m)\, >\, p^{k}h\, +\, a}$ is increasing in $m$ and so
\eq{forLB}
\pr{D(B_p)\, >\, \delta_N L(p)}\, \ge \, B_{N,p}(m_{+}) \, \pr{N(B_{m_+})\, >\, \big(1 \, +\, \delta_N\big) L(p)}
\eqe
for all $m_+ \ge 0$, where $B_{N,p}(m):=\pr{\Bin(N,p)\ge m}$.  For a particular choice of $m_{+}=m_{+}(\delta_N)$ we shall prove that 
\[
B_{N,p}(m_{+}) \, \pr{N(B_{m_+})\, >\, \big(1 \, +\, \delta_N \big) L(p)}\, =\, \exp\left( -(1+o(1))\frac{\delta_N^2 pN}{2k^2(1-p)}\right)\, 
\]
which gives the required lower bound.  

We need to work harder to prove the upper bound.  We must control all contributions to the sum~\eqr{ptom}.  We again use that $\pr{N(B_m)\, >\, p^{k}h\, +\, a}$ is increasing to observe that
\[
\pr{D(B_p)\, >\, \delta_N L(p)}\, \le \, \pr{N(B_{m_-})\, >\, \big(1 \, +\, \delta_N\big) L(p)}\, +\, B_{N,p}(m_-)
\]
for all $m_-\ge 0$.  We shall then choose $m_{-}=m_{-}(\delta_N)$ such that 
\[
B_{N,p}(m_{-})\, =\, \exp\left( -(1+o(1))\frac{\delta_N^2 pN}{2k^2(1-p)}\right)
\]
and
\[
\pr{N(B_{m_-})\, >\, \big(1 \, +\, \delta_N\big) L(p)}\, \ll\, \exp\left(\frac{-\delta_N^2 pN}{2k^2(1-p)}\right)\, .
\]
The latter inequality is proved using Theorem~\ref{thm:nearreg}.

Based on the above sketch it is clear that the probabilities $b_{N,p}(m)$ and $B_{N,p}(m)$, related to the binomial distribution, are central to our proof.  While more precise estimates, up to a multiplicative factor of $1+o(1)$ are known\footnote{see for example the bound in~\cite{GGS2019}, adapted from Bahadur~\cite{B1960}}, the following is sufficient for our purposes.  Throughout the section we set $q:=1-p$.

\begin{theorem}\label{thm:binom}
Suppose that $(x_N)$ is a sequence such that $1 \ll x_N \ll \sqrt{Npq} $. Then
$$
b_{N,p}(\lfloor pN + x_N\sqrt{Npq} \rfloor) \, = \, \frac{1}{\sqrt{Npq}} \exp \left( -(1+o(1))\frac{x_N^2}{2}\right)
$$
and
$$
B_{N,p}(\lfloor pN+x_N\sqrt{Npq}\rfloor) \, =\, \exp \left( -(1+o(1)) \frac{x_N^2}{2} \right).
$$
\end{theorem}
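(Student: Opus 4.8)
The plan is to prove Theorem~\ref{thm:binom} by a direct local central limit estimate for the binomial distribution, obtained via Stirling's formula, and then sum/integrate to handle the tail quantity $B_{N,p}$. Throughout write $m := \lfloor pN + x_N\sqrt{Npq}\rfloor$, so $m = pN + x_N\sqrt{Npq} + O(1)$, and note that the hypothesis $1 \ll x_N \ll \sqrt{Npq}$ guarantees both that $m$ is well inside the range $(0,N)$ (so that all factorials make sense and $m, N-m \to \infty$) and that the relative deviation $(m - pN)/(pN)$ as well as $(m-pN)/(qN)$ tend to $0$, which is what makes the Gaussian approximation clean.

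First I would establish the local estimate. Writing $b_{N,p}(m) = \binom{N}{m} p^m q^{N-m}$ and applying Stirling in the form $\log n! = n\log n - n + \tfrac12\log(2\pi n) + O(1/n)$ to each of $N!, m!, (N-m)!$, one gets
\[
\log b_{N,p}(m) \,=\, -\tfrac12 \log\!\big(2\pi N \tfrac{m}{N}\tfrac{N-m}{N}\big) \,-\, m\log\tfrac{m}{pN} \,-\, (N-m)\log\tfrac{N-m}{qN} \,+\, o(1).
\]
Since $m/N = p + O(x_N\sqrt{pq/N}) = p + o(1)$ and similarly $(N-m)/N = q+o(1)$, the prefactor is $-\tfrac12\log(2\pi Npq) + o(1)$, giving the claimed $1/\sqrt{Npq}$ factor. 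For the main term, set $u := (m-pN)/N$, so $|u| = O(x_N\sqrt{pq/N}) \to 0$; then $m = (p+u)N$, $N-m = (q-u)N$, and a second-order Taylor expansion of $x\log(x/p)$ and $x\log(x/q)$ around $x = p$ and $x=q$ respectively yields
\[
m\log\tfrac{m}{pN} + (N-m)\log\tfrac{N-m}{qN} \,=\, N\Big(\tfrac{u^2}{2p} + \tfrac{u^2}{2q}\Big)(1+o(1)) \,=\, \frac{Nu^2}{2pq}(1+o(1)) \,=\, \frac{x_N^2}{2}(1+o(1)),
\]
using $Nu^2/(pq) = x_N^2 + O(x_N)$ and that the cubic error term is $O(N|u|^3/\min(p,q)^2) = O(x_N^3/\sqrt{Npq}) = o(x_N^2)$. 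This proves the first display of the theorem.

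Next I would deduce the tail bound $B_{N,p}(m) = \sum_{m' \ge m} b_{N,p}(m')$. For the upper bound, the summands decay at least geometrically with ratio bounded away from $1$ once $m' \ge m$ (indeed $b_{N,p}(m'+1)/b_{N,p}(m') = \frac{(N-m')p}{(m'+1)q} \le \frac{(N-m)p}{(m+1)q} < 1$ with a gap, since $m > pN$ by a margin $\gg 1$), so $B_{N,p}(m) \le C\, b_{N,p}(m) \le \exp(-(1+o(1))x_N^2/2)$ absorbing the $1/\sqrt{Npq}$ and the constant. For the matching lower bound, simply $B_{N,p}(m) \ge b_{N,p}(m) = \exp(-(1+o(1))x_N^2/2)$, which suffices since we only need the log asymptotics. (Alternatively one can integrate the local estimate over a window of length $\asymp \sqrt{Npq}$ and compare with the Gaussian tail $\int_{x_N}^\infty e^{-t^2/2}\,dt = \exp(-(1+o(1))x_N^2/2)$, but the geometric-decay argument is shorter.)

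The main obstacle is simply bookkeeping the error terms in the Taylor expansion carefully enough to see that they are $o(x_N^2)$ uniformly — in particular checking that the cubic (and higher) terms, which carry factors of $1/p^2$ or $1/q^2$, are still controlled; this is exactly where the lower hypothesis $x_N \gg 1$ (so that $x_N^2$ dominates the $O(x_N)$ and $O(1)$ corrections, including the $\tfrac12\log(2\pi Npq)$ prefactor) and the upper hypothesis $x_N \ll \sqrt{Npq}$ (so that $|u| \ll \sqrt{pq/N}\cdot\sqrt{Npq}/N \cdot$, i.e. the relative deviations are small and the expansion converges) are both used. Everything else is routine.
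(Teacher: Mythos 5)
Your proof takes a genuinely different route from the paper. The paper dispatches Theorem~\ref{thm:binom} in two lines by citing a sharper local limit estimate (Theorem~1.13 of~\cite{GGS2019}, adapted from Bahadur~\cite{B1960}) and observing that the correction terms there are $\exp(o(x_N^2))$; you instead give a self-contained direct proof via Stirling's formula and a second-order Taylor expansion. Your local estimate is correct: the prefactor bookkeeping and the bound $O(Nu^3/\min(p,q)^2) = O(x_N^3/\sqrt{Npq}) = o(x_N^2)$ on the cubic remainder both check out, using $x_N \ll \sqrt{Npq}$ for the remainder and $x_N \gg 1$ to absorb the $\log(2\pi Npq)$ prefactor and the $O(1)$ from the floor. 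The self-contained argument is arguably preferable here since the paper's citation requires the reader to unwind a fairly technical statement in another source.

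One small imprecision in the tail step, worth flagging because the stated reason is wrong even though the conclusion survives. You claim the ratio $\rho := b_{N,p}(m'+1)/b_{N,p}(m') \le \tfrac{(N-m)p}{(m+1)q}$ is below $1$ ``with a gap'' and then write $B_{N,p}(m) \le C\,b_{N,p}(m)$ with $C$ a constant. In fact the gap is $1 - \rho = \Theta(x_N/\sqrt{Npq})$, which tends to $0$ under the hypothesis $x_N \ll \sqrt{Npq}$; the ratio is \emph{not} bounded away from $1$. The geometric sum therefore gives $B_{N,p}(m) \le b_{N,p}(m)/(1-\rho) = O\!\left(\tfrac{\sqrt{Npq}}{x_N}\right) b_{N,p}(m)$. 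The argument still closes, because $\tfrac{\sqrt{Npq}}{x_N}\cdot\tfrac{1}{\sqrt{Npq}} = \tfrac{1}{x_N} \le 1 = \exp(o(x_N^2))$, but you should say this explicitly rather than invoking a bounded gap, since the ``gap bounded away from zero'' claim as written is false. With that correction the proof is complete.
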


To see that these bounds do indeed follow from Theorem 1.13 of~\cite{GGS2019} (for example) simply note that for any sequence $1\ll x_N\ll \sqrt{Npq}$ we have that:
\begin{enumerate}
\item[(i)] the $E(x_N,N)$ expression is at most $o(x_N^2)$ (by comparison with a geometric series), and 
\item[(ii)] the multiplicative term satisfies $1/\sqrt{2\pi x_N}=\exp(-\log(2\pi x_N)/2)=\exp(o(x_N^2))$\, .
\end{enumerate}

Both of the values $m_{-}$ and $m_{+}$ discussed above will be chosen in relation to
$$
m_{*}\,  :=\, (1+\delta_{N})^{1/k} pN\, ,
$$
which is chosen so that $L(m^*)=L(p)$.  Let us also define
$$
x(m) := \frac{m-pN}{\sqrt{Npq}}
$$
in general and, in particular, set $x_{*} := x(m_{*})$.

\subsection{Lower Bound} \label{sec:LBpworld}
First we choose a sequence $f_N$ such that
\[
\max \left \{ \eta^{r/(r-1)} p^{(k-1)/(r-1)}N,\frac{p^{r/2-k+1}N^{r/2+1}\Delta_r (\log N)^{r/2}}{h} \right \} \ll f_N \ll \delta_N pN.
\]
Note that since 
\[
\delta_N \gg \max \left\{\frac{\Delta_r (N\log N)^{r/2}}{p^{k-r/2}h},\big(\eta^{r} p^{k-r}\big)^{1/(r-1)} \right\}
\]
we can take such sequence. We then choose $m_+ = m_* + f_N$ and we also set $x_+ = x(m_+)$.

Now we prove the following two lemmas, which together with \eqr{forLB}, will give us the desired lower bound.

\begin{lemma}\label{lem:probm+}
\[
\pr{N(B_{m_+}) > (1+\delta_N) L(p)} = 1 - o(1).
\]
\end{lemma}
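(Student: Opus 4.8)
The plan is to show that with the choice $m_+ = m_* + f_N$, the count $N(B_{m_+})$ exceeds $(1+\delta_N)L(p)$ with probability tending to $1$, by combining a first-moment lower bound with a deviation estimate coming from Theorem~\ref{thm:nearreg}. First I would recall that $L(m_+) = \mathbb{E}[N(B_{m_+})]$ and estimate it: since $L(m)=h(m)_k/(N)_k$ and $m_* = (1+\delta_N)^{1/k}pN$ was chosen so that $L(m_*)=L(p)=p^k h$ (up to lower-order corrections from the falling factorial versus the power, which are negligible for the range considered), the extra $f_N$ points push the mean above $(1+\delta_N)L(p)$ by a margin of order $(f_N/m_*) \cdot k \cdot L(p) = \Theta(f_N p^{k-1} h / N)$. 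So I would write $L(m_+) \ge (1+\delta_N)L(p) + c\, f_N p^{k-1}h/N$ for some constant $c>0$, for $N$ large.

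Next, the event $\{N(B_{m_+}) \le (1+\delta_N)L(p)\}$ is contained in the event $\{D^{\HH}(B_{m_+}) \le -(L(m_+) - (1+\delta_N)L(p))\} \subseteq \{|D^{\HH}(B_{m_+})| > c\, f_N p^{k-1}h/N\}$. I would then apply Theorem~\ref{thm:nearreg} (in the form noted in Remark~\ref{rem:minr}, taking whichever $r'\le r$ is available — here it suffices to use the given $r$) with $a = c\, f_N p^{k-1}h/N$ and $m = m_+ = \Theta(pN)$. The two things to check are: (a) that $a$ satisfies the lower-bound hypothesis $a \ge C_r\, h\, (\eta m_+^{k-1}/N^{k-1})^{r/(r-1)}$ of Theorem~\ref{thm:nearreg}, which reduces to $f_N \gg \eta^{r/(r-1)} p^{(k-1)/(r-1)} N$ (after cancelling $h$ and powers of $p$), precisely the first term in the lower bound imposed on $f_N$; and (b) that the resulting exponent $\Omega_k(1) a^{2/r}/(m_+ \Delta_r^{2/r})$ tends to infinity, which after substituting $a$ and $m_+ = \Theta(pN)$ becomes a condition of the form $f_N^{2/r} p^{2(k-1)/r} h^{2/r} / (N^{2/r}\, pN\, \Delta_r^{2/r}) \to \infty$, i.e. $f_N \gg p^{r/2-k+1} N^{r/2+1}\Delta_r (\log N)^{r/2}/h$ up to the $\log N$ factor — which is exactly the second term in the lower bound on $f_N$ (the $\log N$ being harmless, indeed giving polynomial-in-$N$ decay that beats the $N^{O_k(1)}$ prefactor). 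Thus $\pr{|D^{\HH}(B_{m_+})| > a} = N^{O_k(1)} e^{-\omega(\log N)} = o(1)$, and the lemma follows.

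I expect the main obstacle to be purely bookkeeping: carefully tracking the various powers of $p$, $N$, $h$, and $\Delta_r$ through the substitution of $a$ into the bound of Theorem~\ref{thm:nearreg} and verifying that the two constraints on $f_N$ chosen at the start of Section~\ref{sec:LBpworld} are exactly what is needed for both the admissibility of $a$ and the divergence of the exponent. A secondary point requiring a little care is justifying that $L(m_*) = L(p)$ up to an error small compared to $f_N p^{k-1}h/N$, so that the margin claim genuinely holds; this uses $(m)_k/(N)_k = (m/N)^k (1+O(k^2/m))$ together with $m_* = \Theta(pN) \gg 1$ and the fact that $\delta_N \gg 1/\sqrt{pN}$ guarantees the relevant error terms are lower order. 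Once these estimates are in place, the conclusion is immediate from Theorem~\ref{thm:nearreg}.
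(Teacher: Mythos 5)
Your proposal is correct and follows essentially the same approach as the paper: rewrite the event as a negative deviation of size $\Theta(f_N p^{k-1}h/N)$, then apply Theorem~\ref{thm:nearreg} and check that the two lower-bound conditions imposed on $f_N$ at the start of Section~\ref{sec:LBpworld} are exactly what is needed for the admissibility of $a$ and for the exponential factor to beat the $N^{O_k(1)}$ prefactor. The only cosmetic difference is that you estimate the margin $L(m_+)-(1+\delta_N)L(p)$ via a derivative heuristic around $m_*$ whereas the paper does a direct falling-factorial expansion of $(m_*+f_N-k)^k$, but both give the same $-hp^{k-1}f_N/N$ bound.
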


\begin{proof}
Note first that it suffices to prove that
\[
\pr{D(B_{m_+}) \leq (1+\delta_N)L(p) - L(m_+)}\, =\, o(1).
\]
Observe now that
\begin{equation}
\begin{split}
(1+\delta_N)L(p) - L(m_+) &= \, hp^k \left[ (1+\delta_N)p^k  - \frac{(m_+)_k}{(N)_k} \right] \nonumber \\
&\leq  \, h \left[ (1+\delta_N)p^k - \frac{(m_* + f_N-k)^k}{N^k} \right]\nonumber \\
& \leq \, h \left[ (1+\delta_N)p^k - \frac{m_*^k}{N^k} - \frac{m_* ^{k-1}f_N}{N^k} \right]\nonumber \\
& \leq \, -\frac{hp^{k-1}f_N}{N}. \label{eq:change}
\end{split}
\end{equation}
So we have
\[
\pr{D(B_{m_+})  \leq  (1+\delta_N)L(p) - L(m_+)} \le \pr{D(B_{m_+}) \, \leq \, -\frac{hp^{k-1}f_N}{N}}.
\]
Let us denote $a = (hp^{k-1}f_N)/N$. Since $f_N \gg \eta^{r/(r-1)} p^{(k-1)/(r-1)}N$, if $N$ is sufficiently large, we can apply Theorem ~\ref{thm:nearreg} and obtain
\[
\pr{D(B_{m_+}) \leq -a} \, \le \, N^{O_k(1)}\, \exp\left(\frac{-\Omega_k(1) a^{2/r}}{m\Delta_{r}^{2/r}}\right).
\]
Using now that
\[
f_N\, \gg\, \frac{p^{r/2-k+1}N^{r/2+1}\Delta_r (\log N)^{r/2}}{h}
\]
we can easily verify that
\[
N^{O_k(1)}\, \exp\left(\frac{-\Omega_k(1) a^{2/r}}{m\Delta_{r}^{2/r}}\right) = o(1),
\]
which gives us the desired result.
\end{proof}


\begin{lemma}\label{lem:mplus}
\[
B_{N,p}(m_+) \, =\,  \exp\left( -(1+o(1))\frac{\delta_N^2 pN}{2k^2(1-p)}\right).
\]
\end{lemma}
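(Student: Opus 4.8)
The plan is to express $B_{N,p}(m_+)$ directly through Theorem~\ref{thm:binom} by writing $m_+$ in the form $\lfloor pN + x_+\sqrt{Npq}\rfloor$ and controlling the parameter $x_+ = x(m_+)$. Recall $m_+ = m_* + f_N$ with $m_* = (1+\delta_N)^{1/k}pN$ and $f_N \ll \delta_N pN$. First I would estimate $x_*= x(m_*) = (m_*-pN)/\sqrt{Npq}$. Since $(1+\delta_N)^{1/k} = 1 + \delta_N/k + O(\delta_N^2)$ and $\delta_N \to 0$ (this follows from the upper bound hypothesis on $\delta_N$, as $p$ is bounded away from $1$), we get $m_* - pN = (\delta_N/k)(1+o(1))pN$, hence
\[
x_* \, =\, (1+o(1))\,\frac{\delta_N pN}{k\sqrt{Npq}}\, =\, (1+o(1))\,\frac{\delta_N \sqrt{pN}}{k\sqrt{q}}\,.
\]
The contribution of $f_N$ is negligible: $x_+ - x_* = f_N/\sqrt{Npq} \ll \delta_N pN/\sqrt{Npq} = O(x_*)$, but in fact we need more, namely $f_N/\sqrt{Npq} = o(x_*)$; this holds because $f_N \ll \delta_N pN$ gives $x_+ - x_* \ll \delta_N\sqrt{pN/q} = \Theta(x_*)$ — so I should be slightly more careful and note that $(x_+)^2 = (x_*)^2(1+o(1))$ provided $f_N = o(\delta_N pN)$, which is exactly what the strict inequality $f_N \ll \delta_N pN$ provides. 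The rounding from the floor changes $x_+$ by at most $1/\sqrt{Npq} = o(1) = o(x_*)$ since $x_* \to \infty$, which is also harmless.

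Next I would check that $x_+$ lies in the range where Theorem~\ref{thm:binom} applies, i.e. $1 \ll x_+ \ll \sqrt{Npq}$. The lower bound $x_+ \gg 1$ follows from $\delta_N \gg 1/\sqrt{pN}$ (one of the hypotheses of Theorem~\ref{thm:pworld}), which gives $x_* = \Theta(\delta_N\sqrt{pN}) \gg 1$. The upper bound $x_+ \ll \sqrt{Npq}$ is equivalent to $\delta_N pN \ll Npq$, i.e. $\delta_N \ll q$, which holds since $\delta_N \to 0$ and $q = 1-p$ is bounded below. With these verified, Theorem~\ref{thm:binom} yields
\[
B_{N,p}(m_+) \, =\, B_{N,p}\big(\lfloor pN + x_+\sqrt{Npq}\rfloor\big)\, =\, \exp\!\left(-(1+o(1))\frac{x_+^2}{2}\right)\,.
\]
Substituting $x_+^2 = (1+o(1))\delta_N^2 pN/(k^2 q)$ gives exactly $\exp\big(-(1+o(1))\delta_N^2 pN/(2k^2(1-p))\big)$, as required.

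The main obstacle — really the only non-routine point — is bookkeeping the error terms so that the three perturbations of $x_*$ (the Taylor expansion of $(1+\delta_N)^{1/k}$, the shift by $f_N$, and the floor) all contribute only a multiplicative $(1+o(1))$ to $x_+^2$, and simultaneously confirming $x_+$ stays strictly inside $(1, \sqrt{Npq})$ using the stated bounds on $\delta_N$ and $f_N$. I expect this to be straightforward given that $\delta_N \to 0$, $\delta_N\sqrt{pN} \to \infty$, and $f_N \ll \delta_N pN$, but it is worth writing the three estimates explicitly. No deep input beyond Theorem~\ref{thm:binom} is needed.
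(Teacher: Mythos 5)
Your proof is correct and follows essentially the same route as the paper: apply Theorem~\ref{thm:binom} to $m_+$, observe that $f_N\ll\delta_N pN$ forces $x_+=(1+o(1))x_*$, and then Taylor-expand $(1+\delta_N)^{1/k}$ to express $x_*^2$ as $(1+o(1))\delta_N^2 pN/(k^2 q)$. The only difference is that you take the extra (reasonable, if minor) care to verify explicitly that $x_+$ lies in the window $1\ll x_+\ll\sqrt{Npq}$ required by Theorem~\ref{thm:binom} and to account for the floor, steps the paper leaves implicit.
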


\begin{proof}
By Theorem ~\ref{thm:binom}, we have
\[
B_{N,p}(m_+) \, = \, \exp\left( -(1+o(1))\frac{x_{+}^2}{2}\right).
\]
Observe now that
\[
\frac{x_+}{x_*} \, =\, 1 + \frac{f_N}{pN[(1+\delta_N)^{1/k}-1]}.
\]

Since $\delta_N \ll 1$, $(1+\delta_N)^{1/k} - 1 = \Theta(\delta_N)$ and as $f_N \ll \delta_N pN$, we obtain $x_+ = (1+o(1))x_*$, which gives us
\[
B_{N,p}(m_+) \, = \, \exp\left( -(1+o(1))\frac{x_{*}^2}{2}\right).
\]
Finally, note that
\[
x_* \, = \, \sqrt{\frac{pN}{1-p}} \left( (1+\delta_N)^{1/k}-1 \right)
\]
and since $(1+\delta_N)^{1/k}-1 = (1+o(1)) \delta_N/k$, we obtain the required result.
\end{proof}

\subsection{Upper Bound} \label{sec:UBpworld}

Our upper bound on $\pr{D^{\HH} (B_p) > \delta_N L^{\HH}(p)}$ requires us to control all the terms of the sum~\eqr{ptom}, i.e., all the terms in the sum
\[
\pr{D^{\HH}(B_p)\, >\, \delta_N L^{\HH}(p)}\, =\, \sum_{m=0}^{N}b_{N,p}(m) \, \pr{N^{\HH}(B_m)\, >\, p^{k}h\, +\, \delta_N L^{\HH}(p)}\, .
\]
In fact we do not require a very precise analysis.  We shall simply break the sum into two parts $m< m_{-}$ and $m\ge m_{-}$ for a value of $m_-$ we define below.  We bound the terms $m\ge m_{-}$ using only the first probability (the binomial) and the terms $m<m_{-}$ using only the second probability (the deviation in the model $B_m$).

Let $g_N$ be a sequence satisfying
\[
\frac{\eta^{r/(r-1)}p^{(k-1)/(r-1)}N}{h}\, ,\, \frac{N^{r/2+1}(\log{N})^{r/2}\Delta_r}{p^{k-r/2-1}h}\, , \, \frac{\delta_N^r N^{r+1}\Delta_r}{p^{k-r-1}h} \ll\, g_N\, \ll\, \delta_N pN\, .
\]
It is certainly possible to choose such a sequence by the conditions on $\delta_N$ in Theorem~\ref{thm:pworld}.

We define $m_-:=m_*-g_N$ and set $x_{-}=x(m_{-})$.  By a calculation similar to that given in~\eqr{change} we have
\eq{mminus}
L(m_-)\, \le\, (1+\delta_N)L(p)\,  -\, \frac{hp^{k-1}g_N}{N}\, .
\eqe

We now bound the two parts of the sum.  First, for the part $m\ge m_{-}$ we simply use that the sum of these terms is at most
\[
B_{N,p}(m_{-})\, =\, \exp\left( -(1+o(1))\frac{\delta_N^2 pN}{2k^2(1-p)}\right)\, .
\]
This may be verified by simply following the proof of Lemma~\ref{lem:mplus} and using that $g_N\ll \delta_N pN$.

Now, we bound the rest of the sum by $\exp(-x_{*}^2)$ using the following lemma.  This will complete the proof of the upper bound.

\begin{lemma}
\[
\sum_{m=0}^{m_-} \pr{N^{\HH}(B_m)\, >\, p^{k}h\, +\, \delta_N L^{\HH}_(p)}\, \le\, \exp(-x_{*}^{2})\, .
\]
\end{lemma}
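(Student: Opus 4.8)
The goal is to bound $\sum_{m=0}^{m_-} \pr{N^{\HH}(B_m) > p^k h + \delta_N L^{\HH}(p)}$ by $\exp(-x_*^2)$. Since $\pr{N^{\HH}(B_m) > p^k h + \delta_N L^{\HH}(p)}$ is increasing in $m$, every term in the sum is at most the $m = m_-$ term, so the whole sum is at most $(m_- + 1)\,\pr{N^{\HH}(B_{m_-}) > p^k h + \delta_N L^{\HH}(p)} \le N\cdot\pr{D^{\HH}(B_{m_-}) > (1+\delta_N)L(p) - L(m_-)}$. By~\eqref{eq:mminus} we have $(1+\delta_N)L(p) - L(m_-) \ge hp^{k-1}g_N/N =: a$, so it suffices to show $N\cdot\pr{D^{\HH}(B_{m_-}) > a} \le \exp(-x_*^2)$.

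First I would check that Theorem~\ref{thm:nearreg} applies with this value of $a$: the lower-bound hypothesis on $a$ requires $a \ge (10k!)^{10^r} h (\eta m^{k-1}/N^{k-1})^{r/(r-1)}$, and since $m_- \le pN$ this follows from the first term $\eta^{r/(r-1)}p^{(k-1)/(r-1)}N/h \ll g_N$ in the definition of $g_N$ (for $N$ large). Theorem~\ref{thm:nearreg} then gives $\pr{D^{\HH}(B_{m_-}) > a} \le N^{O_k(1)}\exp\!\left(-\Omega_k(1)\,a^{2/r}/(m_-\Delta_r^{2/r})\right)$. Substituting $a = hp^{k-1}g_N/N$ and $m_- \le pN$, the exponent is $-\Omega_k(1)\,(hp^{k-1}g_N/N)^{2/r}/(pN\,\Delta_r^{2/r})$. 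I want this to dominate $x_*^2 + O_k(\log N)$; recall $x_*^2 = (1+o(1))\delta_N^2 pN/(k^2(1-p)) = O(\delta_N^2 pN)$.

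So the remaining task is purely a matter of verifying the size conditions on $g_N$ force the exponent to beat both $\log N$ (absorbing the $N^{O_k(1)}$ factor and the extra factor of $N$ from the union bound over $m$) and $x_*^2$. Unwinding, $(hp^{k-1}g_N/N)^{2/r}/(pN\Delta_r^{2/r}) \gg \log N$ is exactly the content of $g_N \gg N^{r/2+1}(\log N)^{r/2}\Delta_r / (p^{k-r/2-1}h)$, while the same quantity $\gg \delta_N^2 pN$ is exactly the content of $g_N \gg \delta_N^r N^{r+1}\Delta_r/(p^{k-r-1}h)$ — these two lower bounds on $g_N$ were tailor-made for precisely these two inequalities. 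I would write out each of these two elementary algebraic rearrangements, conclude $N\cdot N^{O_k(1)}\exp(-\Omega_k(1) a^{2/r}/(m_-\Delta_r^{2/r})) = o(\exp(-x_*^2)) \le \exp(-x_*^2)$ for $N$ large, and finish.

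**Main obstacle.** There is no real obstacle here; the work was already done in choosing $g_N$. The only mild care needed is bookkeeping: making sure the polynomial factor $N^{O_k(1)}$ from Theorem~\ref{thm:nearreg}, the additional factor $N$ from bounding the sum by its largest term, and the additive slack $x_*^2$ are all comfortably swallowed by the stretched-exponential decay, which is why one needs $g_N$ to beat a $(\log N)^{r/2}$-type term rather than a bare constant. I would also double-check that $m_- \ge 0$ and that $g_N \ll \delta_N p N$ (used only to ensure $m_- = (1+o(1))m_*$, which is not even needed for this lemma but keeps the constants clean).
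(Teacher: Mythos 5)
Your proof is correct and follows essentially the same path as the paper's: bound the sum of at most $N$ increasing terms by $N$ times the $m=m_-$ term, translate the event via~\eqref{eq:mminus} into $D^{\HH}(B_{m_-}) > hp^{k-1}g_N/N$, verify the applicability of Theorem~\ref{thm:nearreg} using the first lower bound on $g_N$, and then use the second and third lower bounds on $g_N$ to show the resulting bound is $o(\exp(-x_*^2)/N)$. The only small inaccuracy is the claim that $m_- \le pN$; in fact $m_- = m_* - g_N$ with $m_* - pN = \Theta(\delta_N pN) \gg g_N$, so $m_-$ may exceed $pN$, but since $\delta_N \to 0$ one has $m_- = (1+o(1))pN$, which is all that your estimates actually require.
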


\begin{proof}
Since $\pr{N^{\HH}(B_m) > p^{k}h + \delta_N L^{\HH}(p)}$ is increasing in $m$ it suffices to prove that
\eq{need}
\pr{N^{\HH}(B_{m_-})\, >\, p^{k}h\, +\, \delta_N L^{\HH}(p)}\, \le\, \frac{\exp(-x_{*}^{2})}{N}\, .
\eqe
By~\eqr{mminus} this event is contained in the event that $D^{\HH}(B_{m_{-}}) > hp^{k-1}g_N/ N$.  Three lower bounds on $g_N$ were given above.  The first ensures that we may apply Theorem~\ref{thm:nearreg} to bound the probability of the deviation $D^{\HH}(B_{m_{-}})\, >\, hp^{k-1}g_N/ N$.  The second and third give that the resulting bound is at most $ N^{O_k(1)}\exp(-\omega(\log{N}))$ and $ N^{O_k(1)}\exp(\omega(x_{*}^2))$ respectively.  In particular, for any constant $C$ we have that
\[
\pr{D^{\HH}(B_{m_{-}})\, >\, hp^{k-1}g_N/ N}\, \le\, N^{O_k(1)} \exp(-C\log{N}-x_{*}^2)
\]
for all sufficiently large $N$.  Choosing $C$ to be one larger than the constant of the $O_k(1)$ we obtain~\eqr{need}, and so complete the proof of the lemma.
\end{proof}

\section{Arithmetic configurations in random sets}\label{sec:applications}

In this section we collect some applications of Theorems  \ref{thm:nearreg} and \ref{thm:pworld} to illustrate its use in obtaining bounds of deviations for the count of arithmetic structures in random sets. To simplify matters we will consider the ambient group to be the cyclic group $\Z/N\Z$ with $N$ prime.

\subsection{$k$--progressions}

The initial motivation of this article was to analyze moderate deviations in counting the number of $k$--term arithmetic progressions ($k$--progressions for short) in random sets. As it has been mentioned in the Introduction, large deviations for this problem have been intensively studied and they are currently well understood, see \cite{JR2011, W2017, BGSZ2016, HMS2019, BG2020}. It was also proved by Berkowitz, Sah and Sawhney~\cite{BSS2020} that the number of $k$--progressions in a dense binomial random subset of $\ZZ_n$ does not obey a local central limit theorem, despite obeying a central limit theorem. We consider here moderate deviations for the $m$--model first.  Let $N^k(B_m)$ denote the number of $k$--progressions in a random subset $B_m$ of the cyclic group $\Z/N\Z$, $N$ prime, and by $D^k(B_m)$ the deviation of $N^k(B_m)$. 

Let  $\HH_k$ be the $k$--uniform hypergraph with vertex set $\Z/N\Z$ and edges the nontrivial $k$--progressions $\{x,x+d,\ldots ,x+(k-1)d\}$ with $x\in \Z / N\Z$ and $d\in \{1,2,\ldots ,(N-1)/2\}$. Every pair $\{x,y\}$ of elements in $\Z/N\Z$ is contained in $\binom{k}{2}$ edges of $\HH_k$, so that the hypergraph is $2$--regular and hence $1$--regular.  A direct application of Theorem \ref{thm:reg} for $r=2$ gives the following result.

\begin{theorem}\label{thm:kprog} For each $a>0$,
$$
\pr{|D^k(B_m)|>a}\le N^{c_1}\exp \left(-c_2\frac{a}{m}\right)
$$
for some constants $c_1, c_2$ which depend only on $k$.
\end{theorem}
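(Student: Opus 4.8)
The plan is to obtain Theorem~\ref{thm:kprog} as a \emph{direct} application of Theorem~\ref{thm:reg} with the parameter choice $r=2$. First I would make the translation into the hypergraph language explicit: with $\HH_k$ the $k$--uniform hypergraph on $\Z/N\Z$ whose edges are the nontrivial $k$--progressions (each $k$--progression counted once, thanks to the restriction $d\in\{1,\dots,(N-1)/2\}$), a $k$--progression is contained in $B_m$ exactly when all of its vertices are, so $N^k(B_m)=N^{\HH_k}(B_m)=e(\HH_k[B_m])$ and hence $D^k(B_m)=D^{\HH_k}(B_m)$. At this point I would also note that $\HH_k$ really is $k$--uniform because $N$ is prime: $x+id\equiv x+jd\pmod N$ with $d\not\equiv 0$ forces $N\mid(j-i)$, which is impossible for $0\le i<j\le k-1<N$.

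Next I would check the regularity hypothesis and apply the theorem. As recorded in the text just before the statement, every pair of vertices of $\Z/N\Z$ lies in exactly $\binom{k}{2}$ edges of $\HH_k$ (the choice of which two of the $k$ positions of the progression are occupied by the two given points pins down the common difference and the starting term), so $\HH_k$ is $2$--tuple--regular, and in particular $(r-1)=1$--tuple--regular by the double--counting remark preceding Theorem~\ref{thm:reg}, with maximum $2$--degree $\Delta_2=\binom{k}{2}$ depending only on $k$. Substituting $r=2$ and $\Delta_r=\binom{k}{2}$ into Theorem~\ref{thm:reg} then gives, for every $a>0$,
\[
\pr{|D^k(B_m)|>a}\ \le\ N^{O_k(1)}\exp\left(\frac{-\Omega_k(1)\,a}{m\binom{k}{2}}\right),
\]
and absorbing $\binom{k}{2}$ into the implied constant yields the claim with $c_1$ the constant hidden in $N^{O_k(1)}$ and $c_2=\Omega_k(1)/\binom{k}{2}$, both functions of $k$ only.

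There is essentially no obstacle here; the only genuine content beyond invoking Theorem~\ref{thm:reg} is the (already--recorded) fact that the pair--degree of $\HH_k$ is uniformly $\binom{k}{2}$, together with the observation that $r=2$ is the \emph{right} parameter to use. Taking $r=1$ instead would only need $0$--tuple--regularity, but the maximum vertex degree of $\HH_k$ is $\Delta_1=\Omega(N)$ (already the progressions with $d$ arbitrary and the point in first position number $\Omega(N)$), so Theorem~\ref{thm:reg} would give only the far weaker bound $N^{O_k(1)}\exp(-\Omega_k(1)a^2/(mN^2))$; the whole point of working with $r=2$ is that $\Delta_2$ is bounded, which upgrades the exponent from $a^2/(mN^2)$--type to one linear in $a/m$.
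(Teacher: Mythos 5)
Your proposal is correct and follows exactly the same route as the paper: observe that every pair of vertices of $\Z/N\Z$ lies in exactly $\binom{k}{2}$ edges of $\HH_k$, so $\HH_k$ is $2$--tuple--regular (hence $1$--tuple--regular) with $\Delta_2=\binom{k}{2}=O_k(1)$, and then invoke Theorem~\ref{thm:reg} with $r=2$. The additional details you supply (why $N$ prime makes $\HH_k$ genuinely $k$--uniform, and why $r=2$ rather than $r=1$ is the useful parameter) are correct and match the surrounding discussion in the paper.
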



The expected value of $N^k(B_m)$ in this example is  (see \eqref{eq:Lj})
$$
L^k(m)={N\choose 2}\frac{(m)_k}{(N)_k}\sim\frac{m^k}{2N^{k-2}}.
$$
Bounds on moderate deviations are thus obtained in Theorem \ref{thm:kprog} for $a\ll m^k/N^{k-2}$. 

In particular, for $m\log N\ll a \ll m^k/N^{k-2}$ we obtain exponentially small bounds for moderate deviations of the $k$--progressions count, which apply to random sets of  $\Z/N\Z$ of size $m\gg (N^{k-2}\log N)^{1/(k-1)}$. This is slightly above the threshold cardinality for the existence of $k$--progressions in a random set (see R\"odl, Ruci\'nski  \cite{RR1997}).  One can not expect to obtain such small deviation for smaller sets since, as shown in Ru\'e, Spiegel and Zumalac\'arregui  \cite{RSZ2018}, the count of $k$--progressions within the threshold window converges to a Poisson distribution. 

We note that the term in the exponential in Theorem \ref{thm:reg} for $r=3$ corresponding to  this example would be $-\Omega_k(1)a^{2/3}/m$ (as $\Delta_3=O_k(1)$), while for $r=1$ we have $\Delta_1\sim N/2$ and  we would obtain  $-\Omega_k(1)a^2/mN$. Both are worse than the one obtained for $r=2$ as displayed in the bound of Theorem \ref{thm:3apm}. This exemplifies  Remark \ref{rem:minr} in the Introduction.

By following the proof of Theorem \ref{thm:nearreg} in the case  $k=3$  one can make the $o_k(1)$ and $\Omega_k(1)$ terms in the bound of the Theorem explicit (even if not optimized).  We include this proof here as it is simpler than the general one described in Section \ref{sec:mproof} and may illustrate its main lines.

\begin{theorem}\label{thm:3apm} Let $D^3(B_m)$ denote the deviation in counting $3$--progressions in $\Z/N\Z$, with $N$ prime. For every $a>0$, we have
$$
\pr{|D^3(B_m)|>a}\le (Nm+1)\,\exp \left(-\frac{a}{9m}\right).
$$
\end{theorem}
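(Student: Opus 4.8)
The plan is to prove Theorem~\ref{thm:3apm} directly, following the same double-induction scheme as in the proof of Theorem~\ref{thm:nearreg} but with $k=3$ and all constants tracked explicitly. Recall that $\HH_3$ is the $3$-uniform hypergraph on $\Z/N\Z$ whose edges are the nontrivial $3$-progressions, and that it is $2$-regular (hence $1$-regular) with $\Delta_3=O_k(1)$ and $h=e(\HH_3)=\binom{N}{2}$. The relevant regularity parameter here is $r=2$, so the goal is to control $D^3_j(B_m)$ for $j=2,3$ via the martingale representation of Proposition~\ref{prop:Mart}. The two statements to run the induction on are: (a) an Azuma--Hoeffding bound on $\pr{D^3_j(B_m)>a}$ with Gaussian tail $\exp(-\Omega(a^2/(m\Delta_1^2)))$ at level $r=1$, and (b) a bound on the martingale increments $X_\ell(B_i)$ at level $r=1$, which then feeds back into an $r=2$ bound $\exp(-\Omega(a/m))$ for $D^3_j(B_m)$. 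Since $\eta=0$ here (the hypergraph is exactly regular), the near-regularity error terms in Lemmas~\ref{lem:claim} and~\ref{lem:deterministic} all vanish, which substantially simplifies the bookkeeping.

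Concretely, first I would write $D^3_j(B_m)=\sum_{i=1}^m Y_i$ with $Y_i=\sum_{\ell=1}^j \frac{(N-m)_\ell(m-i)_{j-\ell}}{(N-i)_j}\binom{3-\ell}{3-j}X_\ell(B_i)$, noting the coefficients are all at most $1$ in absolute value. For the base level, $X_1(B_i)=A_1(B_i)-\Ex{A_1(B_i)\mid B_{i-1}}$ where $A_1(B_i)=d_{\HH_3}(b_i)=\binom{k}{2}\cdot\frac{N-1}{?}$... more simply, since $\HH_3$ is $1$-regular every vertex lies in the same number of edges, so $A_1(B_i)$ is a deterministic constant and $X_1(B_i)=0$. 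Thus $D^3_2(B_m)$ has a martingale representation whose increments involve only $X_1$ (which vanishes) and $X_2$. The increment $X_2(B_i)=A_2(B_i)-\Ex{A_2(B_i)\mid B_{i-1}}$ counts pairs through the new point $b_i$ landing in an edge; since every pair of vertices lies in exactly $\binom{k}{2}=3$ edges, $A_2(B_i)=3\,|B_{i-1}|=3(i-1)$ deterministically, so $X_2(B_i)=0$ as well. Hence $D^3_2(B_m)=0$ identically — which matches the fact that $\HH_3$ is $2$-regular so $N_2(B_m)$ is a deterministic function of $|B_m|$. It then remains only to handle $j=3$: here $Y_i=\binom{0}{0}\frac{(N-m)_3}{(N-i)_3}X_3(B_i)+\binom{1}{0}\frac{(N-m)_2(m-i)}{(N-i)_3}X_2(B_i)+\binom{2}{0}\frac{(N-m)(m-i)_2}{(N-i)_3}X_1(B_i)$, and the last two terms vanish, so $D^3(B_m)=D^3_3(B_m)=\sum_{i=1}^m \frac{(N-m)_3}{(N-i)_3}X_3(B_i)$.

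Now the work is to bound $X_3(B_i)=A_3(B_i)-\Ex{A_3(B_i)\mid B_{i-1}}$, where $A_3(B_i)$ counts pairs $(S,f)$ with $S\subseteq B_{i-1}$ a $2$-set, $f$ an edge of $\HH_3$, $S\cup\{b_i\}\subseteq f$; by Lemma~\ref{lem:claim} (with $\eta=0$) this equals $N_1^{\HH_3(b_i)}(B_{i-1})$, i.e.\ the number of edges of the $2$-uniform graph $\HH_3(x)$ (a union of a bounded number of perfect matchings — in fact $\HH_3(x)$ has $O_k(1)$ edges at each vertex) landing inside $B_{i-1}$. Since $\HH_3(x)$ is $(0,0)$-near-regular (trivially) with bounded max degree, the deviation $D_1^{\HH_3(x)}(B_{i-1})$ is controlled by the base case $P_1$ applied to $\HH_3(x)$: an Azuma--Hoeffding estimate gives $\pr{|D_1^{\HH_3(x)}(B_{i-1})|>\beta}\le \exp(-\Omega(\beta^2/i))$, since $e(\HH_3(x))=\Theta(N)$ and each increment of that martingale is $O_k(1)$. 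Via Lemma~\ref{lem:pN} (the factor $N$) this transfers to a bound on $\pr{|X_3(B_i)|>\beta}$ of the form $N\exp(-\Omega(\beta^2/i))$, so $X_3(B_i)$ is sub-Gaussian with parameter $O(\sqrt{i})$ up to the polynomial prefactor. Plugging this into Lemma~\ref{lem:HAv} (truncated Azuma--Hoeffding) with a threshold $c_i=\alpha$ chosen as $\alpha\asymp (a/m)^{1/2}\cdot\sqrt{m}$ — more precisely $\alpha = a^{1/2}$ times a bounded constant, optimizing the trade-off between $\exp(-a^2/(2m\alpha^2))$ and $N\sum_i \pr{|X_3(B_i)|>\alpha}\le Nm\cdot N\exp(-\Omega(\alpha^2/m))$ — yields $\pr{D^3(B_m)>a}\le (Nm+1)\exp(-\Omega(a/m))$, and chasing the explicit constants ($\binom{k}{2}=3$, the $O_k(1)$ bounds on increments of $\HH_3(x)$, etc.) through carefully gives the stated constant $1/9$ in the exponent. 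The main obstacle is purely the constant-tracking: one must pick the truncation level $\alpha$ and all intermediate constants so that the two competing exponential terms combine into a clean $\exp(-a/9m)$ with prefactor exactly $Nm+1$ (two applications of the $\times N$ loss from Lemma~\ref{lem:pN} and one $\times m$ union bound), rather than producing an unoptimized constant. I would also double-check the degenerate ranges of $a$ (small $a$, where the bound is trivially $\ge 1$) so the statement holds for all $a>0$ without an explicit lower-bound hypothesis.
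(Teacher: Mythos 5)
Your overall plan matches the paper's proof closely: exploit $2$-regularity to make the martingale representation collapse (since $X_1\equiv X_2\equiv 0$) so that $D^3(B_m)=\sum_{i}\frac{(N-m)_3}{(N-i)_3}X_3(B_i)$, then control $X_3(B_i)$ by relating it to a deviation in the $2$-uniform graph $\HH_3(x)$ via Lemma~\ref{lem:claim} with $\eta=0$ and applying an Azuma bound there, and finally feed the result into the truncated Azuma--Hoeffding inequality (Lemma~\ref{lem:HAv}). Two typos worth fixing: the quantity you want is $N_2^{\HH_3(b_i)}(B_{i-1})$ and its deviation $D_2^{\HH_3(b_i)}(B_{i-1})$, not $N_1$ and $D_1$ (you are counting \emph{edges} of the $2$-uniform $\HH_3(x)$ landing in $B_{i-1}$, i.e.\ $\ell-1=2$), though your verbal description of this quantity is correct.

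The place where your bookkeeping goes wrong is the transfer from $D_2^{\HH_3(x)}$ to $X_3$. You insert a factor of $N$ from Lemma~\ref{lem:pN} at this step, giving $\pr{|X_3(B_i)|>\beta}\le N\exp(-\Omega(\beta^2/i))$, and then a second factor of $N$ comes in from Lemma~\ref{lem:HAv}. As you yourself write, this produces ``$N\sum_i\pr{|X_3(B_i)|>\alpha}\le Nm\cdot N\exp(-\Omega(\alpha^2/m))$'', i.e.\ a prefactor of order $N^2m$, yet you then assert the final prefactor is $Nm+1$ and describe it as coming from ``two applications of the $\times N$ loss from Lemma~\ref{lem:pN} and one $\times m$ union bound'' — which is $N^2m$, not $Nm$, so the accounting is internally inconsistent. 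The paper's proof has only \emph{one} $\times N$ (the one built into Lemma~\ref{lem:HAv}); it states directly that $\pr{|X_3(B_j)|>\alpha}\le \exp(-\alpha^2/(9j))$ with no polynomial prefactor, by applying the Azuma argument of the base case to the $1$-regular graph $\HH_3(x)$ (where $\Delta_1(\HH_3(x))=\Delta_2(\HH_3)=3$). If you insist on passing through the general Lemma~\ref{lem:fromP}/Lemma~\ref{lem:pN} machinery, you will land on a bound of the form $(N^2m+1)\exp(-c\,a/m)$, not the stated $(Nm+1)\exp(-a/9m)$; to recover the clean prefactor you must bound $\pr{|X_3(B_i)|>\alpha}$ directly by a single exponential, as the paper does, rather than paying a second $\times N$.
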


\begin{proof} In this case the hypergraph $\HH_3$  defined in the proof of Theorem \ref{thm:kprog} is $2$--regular. Hence, we have $D_1(B_m)=D_2(B_m)=0$ (by Lemma~\ref{lem:deterministic}, with $\eta=0$) and the martingale representation \eqref{eq:Mart} reduces to 
$$
D_3^3(B_m)\, =\, \sum_{j=1}^{m}  \frac{(N-m)_{3}}{(N-j)_3}X_3(B_j).
$$
We bound the probability that $|X_3(B_j)|$ is large  as in the proof of Lemma \ref{lem:fromP} by considering the hypergraph $\HH(x)$ which is a $2$--uniform $1$--regular hypergraph with $\Delta_1(\HH_3(x))=\Delta_2(\HH_3)=3$. It follows from the Azuma--Hoeffding inequality (the argument in Section \ref{sec:base}) that, for each $\alpha>0$,
$$
\pr{X_3(B_j)}>\alpha)\, <\, \exp\left(-\frac{\alpha^2}{9j}\right).
$$
Therefore, as in the argument in Section \ref{sec:QtoP}, Lemma ~\ref{lem:HAv} gives, for each $a>0$ and $\alpha>0$,
$$
\pr{D_3^3(B_m)>a}\, \le\, \exp\left(-\frac{a^2}{2m\alpha^2}\right)+N\sum_{j=1}^m\exp\left(-\frac{\alpha^2}{9j}\right)\le \exp\left(-\frac{a^2}{2m\alpha^2}\right)+Nm\exp\left(-\frac{\alpha^2}{9m}\right).
$$
The result follows by choosing $\alpha=a^{1/2}$. \end{proof}

The bounds on deviations for the counting in the $m$--model can be transferred to the $p$-binomial model as described in Section \ref{sec:pworld}. As mentioned in the Introduction, Theorem \ref{thm:APpworld} follows from  Theorem \ref{thm:pworld} taking into account that the hypergraph $\HH_3$ of $3$--progressions in $\Z/N\Z$ is $1$--regular. We can analogously derive the following result for $k$--progressions from Theorem  \ref{thm:pworld} using again the $1$--regularity of the hypergraph  $\HH_k$ of $k$--progressions.

\begin{theorem}\label{thm:APkpworld}
Let  $D^{k}(B_p)$ denote the deviation of the $k$--progressions count in a $p$-random subset $B_p$ of $\Z/N\Z$, $N$ prime.

Let $\delta_N$ be a sequence satisfying
\[
\max \left \{\frac{\log{N}}{p^{k-1}N^2}\, ,\, \frac{1}{\sqrt{pN}} \right \}\, \ll \,\delta_N\, \ll\, p^{k-2}.\]
Then,
\[
\pr{D^{k} (B_p) > \delta_N L^k(p)} \, =\,  \exp \left( -(1+o(1))\frac{\delta_N^2 pN}{2k^2(1-p)}\right)\, .
\]
Furthermore, the same bounds apply to the corresponding negative deviations.
\end{theorem}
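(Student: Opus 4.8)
The plan is to apply Theorem~\ref{thm:pworld} directly to the hypergraph $\HH_k$ of nontrivial $k$--progressions in $\Z/N\Z$, exactly as Theorem~\ref{thm:APpworld} is obtained in the case $k=3$. First I would record the relevant parameters of $\HH_k$: it is a $k$--uniform hypergraph on $[N]$ with $h=e(\HH_k)={N\choose 2}$ edges, every pair of vertices lies in $\binom{k}{2}$ edges, so $\HH_k$ is $2$--tuple--regular (hence also $1$--tuple--regular). Taking $r=2$ and $\eta=0$ in Theorem~\ref{thm:pworld}, the maximum $2$--degree is $\Delta_2=\binom{k}{2}=\Theta_k(1)$, which is of the same order as the average $2$--degree $h/N^2=\Theta(1)$. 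Since $\eta=0$, the middle term $\big(\eta^{r}p^{k-r}\big)^{1/(r-1)}$ in the condition on $\delta_N$ disappears, and the first term becomes
\[
\frac{\Delta_2 (N\log N)}{p^{k-1} e(\HH_k)}\, =\, \Theta_k\!\left(\frac{N\log N}{p^{k-1}N^2}\right)\, =\, \Theta_k\!\left(\frac{\log N}{p^{k-1}N^2}\right).
\]
Thus the lower bound on $\delta_N$ reduces to $\max\{\log N/(p^{k-1}N^2),\,1/\sqrt{pN}\}$, matching the hypothesis.

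Next I would check the upper bound in the condition on $\delta_N$. With $r=2$ the exponent $1/(r-1)$ is $1$, so
\[
\left(\frac{p^{k-r}e(\HH_k)}{N^{r}\Delta_r}\right)^{1/(r-1)}\, =\, \frac{p^{k-2}{N\choose 2}}{N^2\binom{k}{2}}\, =\, \Theta_k\!\left(p^{k-2}\right),
\]
so the condition $\delta_N\ll (\cdots)^{1/(r-1)}$ becomes $\delta_N\ll p^{k-2}$, again matching the statement (the implicit constant being absorbed into the $\ll$). Finally I would substitute $r=2$ into the conclusion of Theorem~\ref{thm:pworld}: the exponent is $-(1+o(1))\delta_N^2 pN/(2k^2(1-p))$, which is precisely the claimed bound, and $L^{\HH_k}(p)=p^k{N\choose 2}=L^k(p)$. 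The statement for negative deviations follows in the same way, since Theorem~\ref{thm:pworld} (via Theorem~\ref{thm:nearreg} and the Azuma--Hoeffding inequality, which is two--sided) applies symmetrically to $-D^{\HH_k}(B_p)$; alternatively one notes that the lower-bound construction and the upper-bound argument in Section~\ref{sec:pworld} are both insensitive to the sign.

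The only mildly delicate point—hardly an obstacle—is bookkeeping the $\Theta_k(1)$ constants: $\Delta_2$, $\binom{k}{2}$ and $h/N^2$ are all constants depending on $k$, so one must make sure that absorbing them does not distort the stated thresholds on $\delta_N$. Since each of these constants is positive and depends only on $k$, and the conditions in Theorem~\ref{thm:APkpworld} are stated with $\ll$ and $\gg$, they are unaffected; the one substantive simplification relative to the general statement is that the $\eta$--term vanishes because $\HH_k$ is exactly $1$--tuple--regular, so no near--regularity slack enters. Everything else is a direct specialization of Theorem~\ref{thm:pworld}.
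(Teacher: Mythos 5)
Your approach---plug $\HH_k$, $r=2$, $\eta=0$, $\Delta_2=\binom{k}{2}$, $e(\HH_k)=\binom{N}{2}$ into Theorem~\ref{thm:pworld}---is exactly what the paper does; the text there merely says the result is ``analogous'' to the $k=3$ case and leaves the substitution unwritten, so your write-up is a faithful reconstruction of the intended argument. However, there is a cancellation slip in your verification of the lower threshold on $\delta_N$: from
\[
\frac{\Delta_2 (N\log N)}{p^{k-1} e(\HH_k)}\, =\, \Theta_k\!\left(\frac{N\log N}{p^{k-1}N^2}\right)
\]
the correct simplification is $\Theta_k\!\bigl(\tfrac{\log N}{p^{k-1}N}\bigr)$, not $\Theta_k\!\bigl(\tfrac{\log N}{p^{k-1}N^2}\bigr)$ as you wrote. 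Your slip happens to reproduce what appears to be a typo in the statement of Theorem~\ref{thm:APkpworld} itself: both the $r=2$ simplification displayed in the Introduction, namely $\max\{\tfrac{\log N}{p^{k-1}N},\tfrac{1}{\sqrt{pN}}\}\ll\delta_N\ll p^{k-2}$, and the $k=3$ specialization in Theorem~\ref{thm:APpworld}, which reads $\tfrac{\log N}{p^{2}N}$, have a single factor of $N$ in the denominator, and this is what Theorem~\ref{thm:pworld} actually yields here. So the genuine conclusion of your calculation should be $\delta_N\gg\max\{\tfrac{\log N}{p^{k-1}N},\,\tfrac{1}{\sqrt{pN}}\}$, and the $N^2$ in the theorem's statement should be corrected to $N$; everything else in your proposal (the upper bound $\delta_N\ll p^{k-2}$, the form of the exponent, the identification $L^{\HH_k}(p)=p^k\binom{N}{2}$, and the two-sided remark) is right.
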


As the expected number of $k$--progressions in $B_p$ is $L^{k}(p)=p^k\binom{N}{2}$, Theorem \ref{thm:APkpworld} provides exponentially small bounds of moderate deviations   for $p\gg N^{-1/(2k-3)}$.

\subsection{Schur equation} In addition to the $3$--progressions,  let us mention another $3$--variable case, the Schur equation $x+y=z$, related to problems on  sum--free sets. Threshold cardinalities and large deviations for the Schur equation have also been addressed in the literature, see e.g. \cite{GRR1996,W2017}. Consider the hypergraph $\HH$ with vertex set the nonzero elements of $\Z/N\Z$ and edge set the nontrivial Schur triples $\{x,y,z\}$ with $x+y=z$, none of them zero. Every nonzero element $x$ in $\Z/N\Z$ belongs to $N-3$ triples of the form $\{x,y,x+y\}$ with $y\not\in \{ 0,x,-x\}$ and $(N-3)/2$ triples of the form $\{y,x-y,x\}$ with $y\not\in \{ 0,x,x/2\}$. Therefore, $\HH$   is $1$--regular with $d_1(\HH)=3(N-3)/2$ and  maximum $2$--degree $\Delta_2(\HH)=3$.  Let $D^3(B_m)$ denote the deviation on the count of Schur triples of nonzero elements in a random set $B_m$ of $(\Z/N\Z)^*$. A proof analogous to the one of  Theorem \ref{thm:3apm}  gives, for all $a>0$,
$$
\pr{D^3(B_m)>a}\, <\, N^2\exp\left(-c\left(\frac{a}{m}\right)\right),
$$
for some constant $c$. The above bound is analogous to the one in Theorem \ref{thm:3apm} and applies to the same ranges of $a$ and $m$. The bounds on the deviations can be analogously transferred to the $p$--binomial model to obtain a bound as in Theorem \ref{thm:APpworld} for the deviation of Schur triples in this model. 

This example exemplifies some minor adjustments which can be made to apply Theorem \ref{thm:reg} for $r=2$ to the count of solutions of general $3$--term linear equations of the form $ax+by+cz=d$ in $\Z/N\Z$. The corresponding hypergraphs may fail to be regular or near--regular by the degree of some pairs of elements, depending on the values of $a,b,c$, but can be regularized without affecting the counts in a substantial way. We discuss the general approach for linear systems in Section \ref{sec:ls} below.

\subsection{Sidon equation} The Sidon equation $x+y=z+t$  has also been treated in the literature, see e.g. \cite{K2013, RSZ2018}. In particular, bounds of the deviation on the number of solutions of the Sidon equation are given  in \cite[Lemma 5.3]{K2013} by means of the Kim--Vu polynomial concentration inequality \cite{KV2000}.  An analogous analysis can be carried over in our context to obtain bounds on moderate deviations giving more precise results in an appropriate range of the size of random sets. The number of nontrivial quadruples $(x,y,z,t)$ in a set $A\subset \Z/N\Z$ which satisfy the Sidon equation $x+y=z+t$ with $\{x,y\}\neq\{z,t\}$ is also called the Additive Energy of the set (which counts ordered quadruples).  For unordered subsets, solutions of the Sidon equation in $A$ are either $3$--progressions $\{x,y,z=(x+y)/2\}$, which satisfy $x+y=2z$, or quadruples $\{x,y,z,t\}$ of distinct elements. 

We consider the $4$--uniform hypergraph $\HH_4$ which has vertex set $\Z/N\Z$ (as usual we consider $N$ prime) and edges the Sidon quadruples $\{x,y,z,t\}$ with $x+y=z+t$. Every pair $\{x,y\}$ is contained in $(N-3)$ quadruples of the form $\{x,y,z,x+y-z\}$ with $z\not\in \{x,y,(x+y)/2\}$ and $(N-3)$ quadruples of the form $\{x,y,z,x+z-y\}$ with $z\not\in \{x,y,(x-y)/2\}$, so that $\HH_4$ is $2$--regular with $d_2(\HH_4)=2(N-3)$. Moreover,  
each triple  $\{x,y,z\}$ which is not in arithmetic progression belongs to three distinct quadruples $\{x,y,z,t\}$ with $t\in \{x+y-z,x+z-y,y+z-x\}$,  while triples which are $3$--progressions, say $z=(x+y)/2$,  belong to two distinct quadruples. Hence, and $\Delta_3(\HH_4)=3$.  By denoting by $D^4(B_m)$ the deviation in the count of Sidon quadruples of paiwise distinct elements,  Theorem \ref{thm:nearreg}  with $r=3$ gives, for all $a>0$,
$$
\pr{D^4(B_m)>a}\, \le\, N^{c_1}\exp\left(-c_2\frac{a^{2/3}}{m}\right),
$$
for some constants $c_1, c_2$. By bounding the deviation $D^S(B_m)$ on the count of the number of solutions of the Sidon equation by the sum of deviations on the count of quadruples of distinct elements and the count of $3$--progressions from Theorem \ref{thm:3apm} we obtain
$$
\Pr (D^S(B_m)>a)\, \le\, N^{c_1}\exp\left(-c_S\frac{a^{2/3}}{m}\right),
$$
for some constant $c_S$. The expected number  of solutions of the Sidon equation in a random set $B_m$ is 
$$
L^S(m)\, =\, \frac{(m)_3}{2(N-3)}+\frac{(m)_4}{2(N-4)}.
$$
Therefore, for $a\ll m^4/N$ and $a^{2/3}/m\gg \log N$ we obtain exponentially small bounds for the deviation  in the count of solutions of the Sidon equation, which apply to random sets of cardinality $m\gg N^{2/5}(\log N)^{3/5}$. This shows that, for these values of $m$,  the Additive Energy of a random $m$--set in $\Z/N\Z$ is highly concentrated on its mean value. We observe that in this case the threshold for the appearance of solutions of the Sidon equation is $N^{1/4}$ and our bound starts to be effective only above $N^{2/5}$.

As in the preceding examples, Theorem \ref{thm:pworld} can be applied to transfer the bounds to the $p$--binomial model. Let $D^4(B_p)$ denote the deviation on the number of solutions to the Sidon equation with pairwise distinct entries in a random set $B_p$ of $\Z/N\Z$ and $L^4(p)$ its mean value.  Taking into account that the hypergraph $\HH_4$ is $2$--regular and has constant maximum degree $\Delta_3(\HH_4)=3$, Theorem \ref{thm:pworld} with $r=3$ gives
$$
\Pr (D^4(B_p)> \delta_N L^4(p))\, =\, \exp\left(-(1+o(1))\frac{\delta^2_NpN}{32(1-p)}\right),
$$
for every sequence $\delta_N$ satisfying 
$$
\max \left\{ \frac{1}{p^{5/2}}\left( \frac{\log N}{N} \right)^{3/2} ,\frac{1}{\sqrt{pN}} \right\}\, \ll\, \delta_N \ll p^{1/2}.
$$
The above condition on $\delta_N$ gives a meaningful range of applications when $p\gg (\log N/N)^{1/2}$.

\subsection{Linear systems}\label{sec:ls} The arithmetic configurations considered in the above examples correspond to solutions of linear systems 
$$
Ax\, =\, 0
$$
in some finite field $\FF_N$, $N$ a prime,  where $A$ is a $(l\times k)$ matrix with integer entries, $l\le k-2$, $x\in \FF_N^k$. This general setting has been widely addressed in the literature, see e.g. \cite{RR1997, JR2011, RSZ2018}. Conditions are imposed  on the matrix $A$ to avoid some degenerate cases, the most natural one is that  all $l\times l$ submatrices of $A$ are nonsingular. Then  substitution of $k-l$ entries in $x$ gives a unique solution to the equation. It follows that the hypergraph $\HH_A$ with vertex set $\FF_N$ and edges the $k$--subsets which are entries of a solution to the linear system has  maximum $(k-l)$ degree $\Delta_{k-l}(\HH_A)\le {k\choose k-l}$. One can apply Theorem \ref{thm:nearreg} in this context whenever the hypergraph is $(k-l-1,\eta)$--near regular for an appropriate value of $\eta$.  

Substitution of $k-l-1$ entries in $x$ give rise to $N$ solutions to the system. If the equation $x_i=x_j$ is linearly independent with the rows of $A$ for every pair $i,j\in [k]$, then $O_k(1)$ solutions may have repeated entries.  Matrices $A$ which satisfy this last condition are called {\it irredundant} (see \cite{RR1997}), a property that we will also assume.  Moreover, $O_k(1)$ substitutions may give rise to the same solution. Therefore, under the above conditions on $A$, every $(k-l-1)$--set belongs to ${k\choose k-l-1}N-O_k(1)$ edges of the hypergraph. It follows that the hypergraph $\HH_A$ is $(k-l-1,\eta)$--near regular where $\eta = O_k(1/N)$.  Under the above stated conditions on $A$, let $D^{\HH_A}(B_m)$ denote  the deviation on the count of solutions to the system $Ax=0$  with pairwise distinct entries in a random set $B_m$. Hence Theorem \ref{thm:nearreg} with $r=k-l$ gives, for some constant $c_k$  and every sufficiently large $N$, 
$$
\pr{D^{\HH_A}(B_m)>a}\, \le\,  N^{O_k(1)}\exp\left(-\frac{\Omega_k(1)a^{2/(k-l)}}{m}\right)\, ,
$$
for all $a>c_k\left(\frac{m^{k-1}}{N^{l+1}}\right)^{(k-l)/(k-l-1)}$. 
Since the expected value of the number of solutions is of the order $m^k/N^l$, for
\[
\max\left\{ m^{(k-l)/2} (\log N)^{(k-l)/2}, \left( \frac{m^{k-1}}{N^{l+1}}\right)^{(k-l)/(k-l-1)} \right\} \ll a \ll \frac{m^k}{N^l},
\]
we obtain exponentially small bounds for moderate deviations, which apply to random sets of cardinality $m \gg N^{2l/(k+l)}(\log N)^{(k-l)/(k+l)}$.

Let $D^{\HH_A}(B_p)$ denote the deviation on the count of solutions to the system $Ax = 0$ with pairwise distinct entries in a random set $B_p$ of $\ZZ/N\ZZ$ and let $L^{\HH_A}(p)$ be its expected value. By using Theorem \ref{thm:pworld}  we obtain  the corresponding bound for moderate deviations in the $p$--model
$$
\pr{D^{\HH_{A}} (B_p)\, >\, \delta_N L^{\HH_{A}}(p)} \, =\,  \exp \left( -(1+o(1))\frac{\delta_N^2 pN}{2k^2(1-p)}\right)\, .
$$
for every sequence $\delta_N$ satisfying
\[
\max \left\{ \frac{1}{p^{(k+l)/2}}\left( \frac{\log N}{N}\right)^{(k-l)/2}  , \frac{1}{\sqrt{pN}} \right\} \, \ll\, \delta_N \, \ll\, p^{l/(k-l-1)}
\]
and so this is applicable when $p \gg (\log N/N)^{(k-l-1)/(k+l-1)}$.


\section{Lower Bounds}\label{sec:LB}

The aim of this section is to prove that the inequality established by Theorem~\ref{thm:nearreg} is tight up to a constant in the exponent, when $m = \Omega (N)$, for a family of hypergraphs which satisfy the conditions of the theorem.  The \emph{density} of a $k$-uniform hypergraph $\HH$ with $h$ edges is $h/\binom{N}{k}$.  We provide examples across a range of densities.

\begin{samepage}
\begin{prop}\label{prop:LB} Let $0 < \tau < 1/2$.  For each $r\ge 1$ and each sequence $\gamma_n$ satisfying $\Theta_r(1/n)\le \gamma_n\le \Theta_r(1)$ for infinitely many values of $N$, there exists an $N$-vertex $(r+1)$-uniform hypergraph $\HH$ with density $\Theta_r(\gamma_N)$, which is $(r-1,\eta)$-near-regular for $\eta=O_r(1/\gamma_N N)$ and with maximum $r$-degree $\Delta_r$, and for which
\[
\pr{D^{\HH}(B_m) > a}\, \geq\, N^{-O_{r,\tau}(1)} \exp \left( \frac{-O_{r,\tau}(1)a^{2/r}}{m \Delta_r^{2/r}} \right),
\]
for all $\max \{h/N, \eta h \} \le a\le \Omega_{r,\tau}(1)h$ and all $\tau N\le m\le (1-\tau)N$.
\end{prop}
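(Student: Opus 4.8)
The plan is to construct, for each target density $\gamma_N$, an $(r+1)$-uniform hypergraph $\HH$ whose edge set is a disjoint union of many ``copies'' of a small fixed building block, arranged so that the count $N^{\HH}(B_m)$ behaves, after conditioning on the intersection pattern, like a sum of roughly independent contributions each of which is a degree-$r$ polynomial in independent-ish Bernoulli-type variables. The building block should be chosen so that deleting (or adding) a single vertex changes the local edge count by a controlled amount, which is what forces the exponent $a^{2/r}/(m\Delta_r^{2/r})$ rather than something smaller; the natural candidate is the ``$r$-dimensional box'' or a cleanly-partitioned $r$-partite $(r+1)$-uniform structure, and I would defer the explicit combinatorial details to the appendix as the authors announce. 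First I would fix $\tau$, fix $r$, and set the number of blocks $b$ and the block size so that the total number of edges is $h=\Theta_r(\gamma_N)\binom{N}{r+1}$, the maximum $r$-degree is exactly $\Delta_r$, and every $(r-1)$-set has degree $(1\pm O_r(1/\gamma_N N))\bar d_{r-1}$ (near-regularity with the claimed $\eta$): the slack comes from boundary effects between blocks and from low-degree sets straddling several blocks, and a routine double count shows these are lower-order, giving $\eta=O_r(1/\gamma_N N)$.

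Next, for the lower bound on $\pr{D^{\HH}(B_m)>a}$ I would use a standard ``planting'' / change-of-measure argument or a direct second-moment-plus-small-ball computation. Concretely: pick a small set $S$ of $r$ special coordinates (one per ``direction'' of the block structure), condition on $B_m$ containing a slightly-larger-than-typical frater of each coordinate class, and observe that on this event the conditional mean of $N^{\HH}(B_m)$ exceeds $L^{\HH}(m)$ by $\Theta(a)$ while the conditioning event itself has probability $\exp(-O_{r,\tau}(1)a^{2/r}/(m\Delta_r^{2/r}))$. The exponent arises because to shift the count by $a$ one needs to bias each of $r$ ``coordinate densities'' by about $(a/h)^{1/r}\cdot(\text{normalisation})$, and biasing one coordinate density of a size-$\Theta(m)$ random subset by $\epsilon$ costs $\exp(-\Theta(\epsilon^2 m))$; multiplying the $r$ costs and unwinding $\epsilon \sim (a\Delta_r^{?}/\dots)^{1/r}/m^{?}$ gives exactly $a^{2/r}/(m\Delta_r^{2/r})$. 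I would make this rigorous by writing $N^{\HH}(B_m)$ exactly as $\sum_{\text{blocks}} \prod_{\text{directions}} (\text{number of chosen vertices in that class})$ up to multiplicative constants, conditioning on the $r$ vectors of class-sizes, and applying a local CLT / Stirling estimate for the hypergeometric (equivalently, the $B_m$-conditioned class sizes), which is where the $N^{-O_{r,\tau}(1)}$ polynomial prefactor is lost; the range $\tau N\le m\le(1-\tau)N$ is used precisely so that all the hypergeometric variances are $\Theta_\tau(m)$ and the local limit estimates are uniform.

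The main obstacle, I expect, is not the probabilistic estimate but the \emph{simultaneous} realisation of all three quantitative demands on $\HH$ — prescribed density $\Theta_r(\gamma_N)$ across the whole range $\Theta(1/N)\le\gamma_N\le\Theta(1)$, near-regularity with $\eta=O_r(1/\gamma_N N)$ (note this degrades exactly as the hypergraph gets sparser, so the construction must be ``as regular as the density allows''), and a prescribed maximum $r$-degree $\Delta_r$ that one is free to tune across $\Theta(1)\le\Delta_r\le\Theta(N)$ — together with the clean product structure needed for the lower-bound computation. I would handle this by taking $\HH$ to be a blow-up of a fixed small $(r+1)$-uniform ``gadget'' on parts of carefully chosen (possibly unequal) sizes, plus a sparse ``regularising'' overlay of edges to kill the degree discrepancies of low-order sets without disturbing $\Delta_r$ or the count's distribution by more than constants; verifying that the overlay can be chosen with the required properties is the delicate step, and it is exactly the content I would relegate to the appendix. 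Once the construction is in hand, the rest is bookkeeping: check $a$ lies in $[\max\{h/N,\eta h\},\Omega_{r,\tau}(1)h]$ is exactly the regime where the biasing argument neither underflows (below $h/N$ the shift is smaller than one ``unit'' and the prefactor dominates) nor overflows (above $\Omega(h)$ one leaves the moderate-deviation regime), and conclude.
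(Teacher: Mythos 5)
Your high-level probabilistic strategy matches the paper's: condition on a mildly unbalanced intersection pattern of $B_m$ with a fixed vertex partition, lower-bound the cost of the conditioning via a Stirling/local-limit estimate for the hypergeometric (the paper's Lemma~\ref{lem:E} yields $N^{-O_{\ell,\tau}(1)}\exp(-O_{\ell,\tau}(\eps^2 m))$), and lower-bound the conditional mean shift (the paper's Lemma~\ref{lem:effect}). But there are two genuine gaps in the proposal as written.

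First, the mechanism you offer for the exponent $a^{2/r}/(m\Delta_r^{2/r})$ — ``bias each of $r$ coordinate densities by about $(a/h)^{1/r}$ and multiply the $r$ costs'' — does not by itself explain anything. Biasing $O(1)$ parts by $\eps$ has cost $\exp(-\Theta(\eps^2 m))$ whether you do it once or $r$ times; and if the edge count were a simple product of part-densities the first-order effect of the bias would be linear in $\eps$, which would predict that $\eps\sim a/h$ already suffices and would give a lower bound of $\exp(-\Theta((a/h)^2 m))$, much larger than the target — which cannot be right, since it would contradict the upper bound of Theorem~\ref{thm:nearreg}. The actual source of the $\eps^r$ dependence is that $(r-1,\eta)$-near-regularity forces the coefficients $c_1,\dots,c_{r-1}$ of the tilting polynomial $Q^{\HH}(x)=\sum_e(1+2x)^{e_1}(1-x)^{e_2+e_3}$ to be $O_r(\eta h + h/N)$ (the paper's Lemma~\ref{lem:coefficients}(ii)), so that the first appreciable term in the conditional shift is $c_r\eps^r$. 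The paper accordingly biases a \emph{fixed} three parts ($V_1$ up, $V_2,V_3$ down), not $r$ of them, and the whole lower bound turns on ensuring $c_r^{\HH}=\Omega_r(hN^r/\text{something})$ alongside $c_j^{\HH}$ small for $j<r$. Without this observation, the back-of-envelope in your proposal gives the wrong answer.

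Second, the candidate constructions you float — an $(r+1)$-partite transversal ``box,'' or a disjoint union of small blocks — are not near-regular: an $(r-1)$-set with two vertices in the same part has $r$-degree zero, so no choice of $\eta=o(1)$ can hold, let alone $\eta=O_r(1/\gamma_N N)$. You correctly identify that something must regularise the construction while preserving the product structure and the tunable density, and you correctly flag this as the hard step, but the ``sparse regularising overlay'' is not an argument. The paper's resolution is a single $\ell$-part hypergraph with $\ell=4(r+1)!$, built from one-type hypergraphs $\HH_{\x^+}^{\alpha_\x}$ indexed by partitions $\x\in\PP_{r-1}$, with the weights $\alpha_\x$ chosen recursively (the appendix, Lemma~\ref{lem:weight}) so that every $(r-1)$-set has degree $\gamma s^2 \pm Cs$; the density, maximum $r$-degree, and coefficient $c_r^{\HH}$ are then all verified to have the right magnitude. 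This is the substantive content your proposal defers, and it is not routine: it is most of the proof.
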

\end{samepage}

In the cases $r=1$ and $r=2$ particularly simple constructions exist.  For $r=1$ one may take a regular graph on half the vertices and leave the remaining vertices isolated.  For $r=2$ one may take two disjoint copies of a regular $3$-uniform hypergraph.  Even for $r=3$ it is not difficult to describe a family of examples: Suppose $N=3s$ and partition the vertex set into $3$ equal parts $V=V_1\cup V_2\cup V_3$, a set $e$ of four vertices will be an edge of $\HH$ if it has $2$ vertices in each of two parts.

In general, for $r\ge 4$, we need to work harder to produce the examples.  Based on the $r=2$ and $r=3$ cases we see that it is useful to partition the vertex set.  Given a partition $V=V_1\cup \dots \cup V_{\ell}$ into $\ell$ equal parts of size $s:=N/\ell$ we will label the vertices $v_{i,j}:i\in [\ell],j\in [s]$.  We call $\HH$ an $\ell$-part hypergraph if it is invariant under permutations of the parts $V_i:i\in [\ell]$.  That is, given a permutation $\pi$ of $[\ell]$ then $\{v_{i_1,j_1},\dots ,v_{i_k,j_k}\}$ is an edge of $\HH$ if and only if $\{v_{\pi(i_1),j_1},\dots ,v_{\pi(i_k),j_k}\}$ is an edge of $\HH$.  For example, the $r=2$ case above is a $3$-uniform $2$-part hypergraph and the $r=3$ case is a $4$-uniform $3$-part hypergraph.  For general $r$ the hypergraphs we construct will be $(r+1)$-uniform and $\ell$-part for some $\ell$ chosen sufficiently large.  (In fact, we take $\ell=4(r+1)!$)

We consider the deviation event $D^{\HH}(B_m) > a$ relative to an auxiliary event in which the random set $B_m$ is unevenly distributed with respect to the parts $V_1,\dots ,V_{\ell}$.  Specifically, we define $E^{\ell}_{\eps}$ to be the event\footnote{the $\pm 1$ is only included as the values themselves may not be integers} that 
\begin{align*}
|B_m\cap V_1|\, & =\, (1+2\eps)\frac{m}{\ell}\, \pm\, 1\, .\\
|B_m\cap V_i|\, & =\, (1-\eps)\frac{m}{\ell}\, \pm\, 1\, \qquad\quad i\, =\, 2,3\, .\\
|B_m\cap V_i|\, & =\, \frac{m}{\ell}\, \pm\, 1\qquad\qquad\qquad\, i\, =\, 4,\dots,\ell\, .
\end{align*}
Since
\begin{equation}\label{eq:LB}
\pr{D^{\HH}(B_m) > a}\, \ge\, \pr{E^{\ell}_{\eps}}\, \pr{D^{\HH}(B_m) > a\, \big|\, E^{\ell}_{\eps}}
\end{equation}
it will suffice to prove lower bounds on $\pr{E^{\ell}_{\eps}}$ (see Lemma~\ref{lem:E}) and $\pr{D^{\HH}(B_m) > a| E^{\ell}_{\eps}}$ (see Lemma~\ref{lem:effect}).  

The responsiveness of the deviation $D^{\HH}(B_m)$ to the uneven distribution $E^{\ell}_{\eps}$ depends on the hypergraph $\HH$.  In particular, it depends on the coefficients of the following polynomial
$$
Q^{\HH}(x) = \sum_{e \in E(\HH)} (1+2x)^{e_1} (1-x)^{e_2 + e_3}
$$
where $e_i:=|e\cap V_i|$ for $i\in [\ell]$, which may be defined for any $\ell$-part hypergaph $\HH$.  Let $c_j^{\HH}$ be the coefficient of $x^j$ in $Q^{\HH}(x)$.

Given an $(r+1)$-uniform $\ell$-part hypergraph $\HH$, we define $\HH$ to be $(r,\eta,\gamma)$-\emph{nice} if 
\begin{enumerate}
\item[(i)] $\HH$ is $(r-1,\eta)$-near-regular,
\item[(ii)] the density of $\HH$ is between $\gamma/\ell^2$ and $3\gamma/\ell^2$,
\item[(iii)] $\Delta_r\le \gamma N$, and
\item[(iv)] $c_r^{\HH}\ge \gamma N^{r+1}/\ell^{r+1}$.
\end{enumerate}

Proposition~\ref{prop:LB} follows from the following two propositions.  The first states the required lower bound for this family of hypergraphs.

\begin{prop}\label{prop:LBif} If $\HH$ is an $(r+1)$-uniform $\ell$-part hypergraph (for $\ell=O_r(1)$) with maximum $r$-degree $\Delta_r$, which is $(r,\eta,\gamma)$-nice for $\eta=O_{r}(1/\gamma N)$ then
\[
\pr{D^{\HH}(B_m) > a}\, \geq\, N^{-O_{r,\tau}(1)} \exp \left( \frac{-O_{r,\tau}(1)a^{2/r}}{m \Delta_r^{2/r}} \right),
\]
for all $\max \{h/N, \eta h \}\le a\le \Omega_{r,\tau}(1)h$ and all $\tau N\le m\le (1-\tau)N$.
\end{prop}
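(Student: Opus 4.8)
The plan is to make the decomposition~\eqref{eq:LB} effective by proving its two factors are controlled, and then to combine them. Throughout write $h=e(\HH)$, $s=N/\ell$ with $\ell=O_r(1)$, and $t=m/N$, so that $t=\Theta_\tau(1)$. The first factor, which I would isolate as Lemma~\ref{lem:E}, is a lower bound on the probability of the skewed event $E^\ell_\eps$: the vector $(|B_m\cap V_1|,\dots,|B_m\cap V_\ell|)$ has the multivariate hypergeometric law with means $m/\ell$ and coordinate variances $\Theta_{r,\tau}(m)$, so a local central limit theorem for this law (or a direct Stirling estimate) gives $\pr{E^\ell_\eps}\ge N^{-O_r(1)}\exp(-O_{r,\tau}(\eps^2 m))$ for all $0\le\eps\le c_r$, the polynomial factor accounting for landing in a prescribed cell near the mode. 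I would then fix $\eps:=C_r(a/h)^{1/r}$ for a large constant $C_r=C_r(\tau)$; since $a\le\Omega_{r,\tau}(1)h$ with a small enough implied constant we have $\eps\le 1/2$, and since $a\ge h/N$ and $a\ge\eta h$ we have $\eps\ge C_r\max\{N^{-1/r},\eta^{1/r}\}$.

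The second factor (Lemma~\ref{lem:effect}) is that, conditionally on $E^\ell_\eps$, the count $N^\HH(B_m)$ typically exceeds $L^\HH(m)+a$. Conditioned on the part sizes $b_i$, the sets $B_m\cap V_i$ are independent uniform $b_i$-subsets, so $\Ex{N^\HH(B_m)\mid E^\ell_\eps}=\sum_e\prod_i(b_i)_{e_i}/(s)_{e_i}=t^{r+1}Q^\HH(\eps)+O(h/N)$, while $L^\HH(m)=t^{r+1}Q^\HH(0)+O(h/N)$. As each edge has only $r+1$ vertices, $Q^\HH$ has degree at most $r+1$, so the conditional mean shift is $t^{r+1}\sum_{j=1}^{r+1}c_j^\HH\eps^j+O(h/N)$. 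Niceness controls the coefficients: (iv) gives $c_r^\HH\ge\gamma N^{r+1}/\ell^{r+1}=\Theta_r(h)$, so the $j=r$ term is $\Theta_{r,\tau}(C_r^r a)$. For $j\le r-1$, near-regularity (condition (i), inherited at all lower levels) yields $\sum_e\binom{e_1}{p}\binom{e_2+e_3}{j-p}=(1\pm\eta)\bar{d}_j\binom{s}{p}\binom{2s}{j-p}$ for $0\le p\le j$, whence, using $(1+2x)^s(1-x)^{2s}=(1-3x^2+2x^3)^s$,
\[
c_j^\HH=\bar{d}_j\Big([x^j](1-3x^2+2x^3)^s\ \pm\ \eta\,O_j(s^j)\Big);
\]
with $\bar{d}_j=\Theta_r(h/N^j)$ and $[x^j](1-3x^2+2x^3)^s=O_j(s^{\lfloor j/2\rfloor})$ this gives $|c_j^\HH|\eps^j=O_r\big((N^{-\lceil j/2\rceil}+\eta)c_r^\HH\eps^r\big)$, which is $o(c_r^\HH\eps^r)$ precisely because $\eps\ge\Theta(N^{-1/r})$ and $\eps\ge\Theta(\eta^{1/r})$. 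Finally $|c_{r+1}^\HH|\eps^{r+1}=O_r(\eps)\cdot c_r^\HH\eps^r=o(c_r^\HH\eps^r)$ since $\eps\le1/2$. Hence for $C_r$ and $N$ large, $\Ex{N^\HH(B_m)\mid E^\ell_\eps}\ge L^\HH(m)+2a$.

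To turn the mean shift into a probability I would bound the conditional variance. Expanding $N^\HH$ over edges and splitting pairs $e,e'$ by $q=|e\cap e'|$: disjoint pairs contribute $O(h^2/N)$ in absolute value (a weak negative correlation, one power of $1/s$ per shared part), while for $q\ge1$ there are at most $O_r(h\Delta_q)$ pairs, each contributing $\Theta_\tau(1)$; since near-regularity gives $\Delta_q=\Theta_r(h/N^q)$ for $q\le r-1$, $\Delta_r\le\gamma N=\Theta_r(h/N^r)$ and $\Delta_{r+1}=1$, the sum $\sum_{q\ge1}\Delta_q$ is $\Theta_r(h/N)$, so $\mathrm{Var}(N^\HH(B_m)\mid E^\ell_\eps)=O_r(h^2/N)$. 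Chebyshev then gives $\pr{N^\HH(B_m)>L^\HH(m)+a\mid E^\ell_\eps}\ge\tfrac12$ once $a\ge C_r' h/\sqrt N$, and combining with the first factor,
\[
\pr{D^\HH(B_m)>a}\ \ge\ \tfrac12\,\pr{E^\ell_\eps}\ \ge\ N^{-O_{r,\tau}(1)}\exp\big(-O_{r,\tau}(\eps^2 m)\big).
\]
Since niceness forces $h=\Theta_r(\gamma N^{r+1})$ and (being squeezed between the average $r$-degree and $\gamma N$) $\Delta_r=\Theta_r(\gamma N)$, we have $h^{2/r}=\Theta_r(N^2\Delta_r^{2/r})$, and with $m=\Theta_\tau(N)$ this yields $\eps^2 m=C_r^2(a/h)^{2/r}m=\Theta_{r,\tau}\big(a^{2/r}/(m\Delta_r^{2/r})\big)$, which is exactly the claimed exponent. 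In the leftover range $h/N\le a<C_r'h/\sqrt N$ the asserted bound is weaker than an absolute constant, so there it suffices to know $\pr{D^\HH(B_m)>a}=\Omega(1)$, which follows from anticoncentration of $N^\HH(B_m)$ — e.g. a central limit theorem, available since $\HH$ is near-regular.

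I expect the real work to lie in two places. The first is the coefficient analysis for $j<r$: it is essential that only level-$(r-1)$ near-regularity is available, which is why the identity $(1+2x)^s(1-x)^{2s}=(1-3x^2+2x^3)^s$ (and the vanishing of its linear coefficient) must be exploited, and one must track carefully how the two hypotheses $a\ge h/N$ and $a\ge\eta h$ combine to render those terms negligible. The second is the conditional anticoncentration in the small-$a$ regime, where the second moment no longer suffices and one must invoke a genuine distributional statement for the hypergraph count rather than Chebyshev.
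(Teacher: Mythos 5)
Your overall strategy matches the paper's: the same decomposition $\pr{D^{\HH}(B_m)>a}\ge\pr{E^{\ell}_{\eps}}\,\pr{D^{\HH}(B_m)>a\mid E^{\ell}_{\eps}}$, the same Stirling estimate for $\pr{E^{\ell}_{\eps}}$ (Lemma~\ref{lem:E}), the same choice $\eps=\Theta_{\ell,\tau}((a/h)^{1/r})$, and the same coefficient analysis of $Q^{\HH}$ to obtain the mean shift $\Ex{N^{\HH}(B_m)\mid E^{\ell}_{\eps}}\ge L^{\HH}(m)+2a$ (Lemma~\ref{lem:effect}). Your use of the factorisation $(1+2x)(1-x)^2=1-3x^2+2x^3$ gives a slightly sharper handle on $c_j^{\HH}$ for $j<r$ than the paper's Lemma~\ref{lem:coefficients}, but both arguments rest on the same vanishing-of-the-leading-term phenomenon and both deliver the error $O_r(\eta h+h/N)$, which is swallowed by $a$.

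The divergence, and the gap, is in the final step converting the conditional mean shift into a probability lower bound. You propose Chebyshev with a conditional second-moment estimate for $a\gtrsim h/\sqrt N$, plus a CLT/anticoncentration claim to cover $h/N\le a\lesssim h/\sqrt N$. The paper avoids both with a one-line observation: since $0\le N^{\HH}(B_m)\le h$ deterministically, writing $N^{\HH}\le h\,1_{N^{\HH}>L+a}+L+a$ and taking $\Ex{\cdot\mid E^{\ell}_{\eps}}$ yields $\pr{N^{\HH}>L+a\mid E^{\ell}_{\eps}}\ge a/h$, and $a/h\ge 1/N$ by the hypothesis $a\ge h/N$, so this factor is absorbed into $N^{-O_{r,\tau}(1)}$. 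That single inequality handles the entire admissible range of $a$ and renders the conditional variance computation unnecessary.

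As written, your proof has a genuine gap in the regime $h/N\le a< C_r'h/\sqrt N$: you assert that anticoncentration of $N^{\HH}(B_m)$ ``follows from a central limit theorem, available since $\HH$ is near-regular,'' but no such CLT for the $B_m$-model is proved in, or cited by, the paper, and establishing even one-sided anticoncentration of $N^{\HH}(B_m)$ at a prescribed scale is itself a nontrivial project. You also are asking for more than you need there --- in that range the target bound is no stronger than $N^{-O_{r,\tau}(1)}$, so a polynomial rather than $\Omega(1)$ lower bound would do --- but your route to even a polynomial bound still passes through machinery that would have to be built. The paper's bounded-random-variable trick is the missing ingredient that makes all of this disappear.
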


The second proposition states that such hypergraphs exist.

\begin{prop}\label{prop:LBexists} For all $r\ge 2$ there exists $\ell=\ell(r)$ such that for all sequences $10/n\le \gamma_n\le \ell^{-2}$ for infinitely many values of $N$, there exists an $N$-vertex $(r+1)$-uniform hypergraph $\HH$ which is $\ell$-part and $(r,\eta,\gamma_N)$-nice with $\eta=O_r(1/\gamma_N N)$.
\end{prop}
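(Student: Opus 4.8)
The plan is to assemble $\HH$ as an edge-disjoint union of boundedly many \emph{symmetrised profile blocks}. For a composition $\mathbf p=(p_1,\dots,p_\ell)$ of $r+1$ into $\ell$ nonnegative parts, and a choice — for each $i$ with $p_i\ge 2$ — of a $p_i$-uniform \emph{seed} $\mathcal S_i$ on the part $V_i\cong[s]$ (here $s=N/\ell$) that is $(p_i-1)$-tuple-regular of a prescribed density, let the block $\HH_{\mathbf p}$ consist of all edges obtained by picking, in each part $i$ in the support of $\mathbf p$, a seed of $\mathcal S_i$ and taking the union, symmetrised over all permutations of the $\ell$ parts so that it is $\ell$-part. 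The whole construction will be a union over a fixed set of profile orbits (each used at most once, so the blocks are automatically edge-disjoint), with the seed densities as the free parameters. The key structural fact is that every quantity entering $(r,\eta,\gamma)$-niceness — the $(r-1)$-degree of an $(r-1)$-tuple of each given profile, $e(\HH)$, $\Delta_r$, and the coefficient $c_r^{\HH}$ of $Q^{\HH}$ — is, to leading order in $s$, an explicit $\mathbb{Q}$-linear function of the seed densities, with coefficients depending only on $r$ and on the chosen profiles. So niceness reduces to a bounded linear problem, with the $o(1)$ errors absorbed at the end.

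I would fix $\ell:=4(r+1)!$ and proceed in three stages. First, the $(r-1)$-near-regularity condition asks that the finitely many values $d_{\HH}(\mathbf q)$ — one for each composition $\mathbf q$ of $r-1$, of which there are only $O_r(1)$ — all agree up to relative error $O_r(1/\gamma_N N)$; I would solve the associated \emph{exact} homogeneous linear system over $\mathbb{Q}_{\ge 0}$, which has nontrivial solutions because taking $\ell$ this large furnishes far more profile orbits than there are constraints. Second, within the affine family of exactly-regular rational solutions I would select one for which $c_r^{\HH}$ is a positive $\Omega_r(1)$ multiple of $e(\HH)\,\ell^{-(r-1)}$; the candidate, generalising the $r=1,2$ examples, is one in which a distinguished part $V_1$ carries a large profile-entry, so that a factor $(1+2x)^{a}$ with $a\ge r$ appears in $Q^{\HH}$ and contributes a strictly positive $x^r$-coefficient that the remaining, necessarily more spread-out, blocks are too sparse to overcome. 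Third, I would clear denominators and realise the seeds explicitly: a $(p-1)$-tuple-regular $p$-uniform hypergraph on $[s]$ of essentially any prescribed density, with maximum degree within a constant factor of the average, can be produced by an algebraic construction on $\ZZ/s\ZZ$ (translates of fixed blocks, of difference-family type, or the level sets of the sum-of-coordinates map), valid for all $s$ in a suitable residue class — which is why the conclusion is claimed only for infinitely many $N$. Scaling the common size of the seeds against $\gamma_N\in[10/N,\ell^{-2}]$ then yields density $\Theta_r(\gamma_N)$, $\Delta_r\le\gamma_N N$, and a regularity defect of $O_r(1/s)$ per part, hence $\eta=O_r(1/\gamma_N N)$.

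The step I expect to be the main obstacle is reconciling the two demands of the middle stages: flattening the $(r-1)$-degree function wants the edges spread evenly across the parts, but spread-out profiles tend to contribute non-positively — often strictly negatively, as the $r=3$ discussion already shows — to $c_r^{\HH}$, so the set of profile orbits must be chosen with just enough asymmetry to keep $c_r^{\HH}$ bounded away from $0$ while retaining enough orbits (and enough symmetry, after averaging over part-permutations) to make the regularity system solvable with nonnegative weights. A secondary point requiring care is the passage from the exact rational solution to an integer-realisable one: one must check that the needed seeds exist as \emph{simple} hypergraphs with the claimed degree bounds and across a dense enough range of densities, and that rounding perturbs both the near-regularity and $c_r^{\HH}$ only within the tolerances fixed by $\gamma_N$.
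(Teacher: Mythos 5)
Your global architecture matches the paper's: fix $\ell = 4(r+1)!$, decompose $\HH$ into one block per type, treat the $O_r(1)$ per-type degree constraints as a small linear system in the block densities (the paper solves it by an induction on $|\x|$ in its Lemma~\ref{lem:weight}), and then verify positivity of $c_r^{\HH}$ directly. The divergence — and the gap — is in how each block is realised and sparsified, and the realisation you propose does not produce a near-regular hypergraph.

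You build $\HH_{\mathbf p}$ by choosing, in each part $V_i$ with $p_i \ge 2$, a $p_i$-uniform $(p_i-1)$-tuple-regular seed $\mathcal S_i$ of prescribed sub-constant density, and letting edges be unions of one seed edge per occupied part. Now take any profile with an entry $p_i\in\{2,\dots,r-1\}$ (every profile other than $(r+1)$, $(r,1)$ and $(1,\dots,1)$ has one). An $(r-1)$-set $A$ with $|A\cap V_i|=p_i$ can lie in an edge of $\HH_{\mathbf p}$ only if $A\cap V_i$ is itself an edge of $\mathcal S_i$. Since $\mathcal S_i$ has density strictly between $0$ and $1$, this indicator is $0$ for most such $A$ and $1$ for a small fraction, so $d_{\HH_{\mathbf p}}(A)$ is not approximately a function of $\x(A)$, let alone a linear function of the seed densities. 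The $(p_i-1)$-tuple-regularity of $\mathcal S_i$ only equalises $q$-degrees for $q\le p_i-1$; it says nothing about which $p_i$-sets are edges, which is exactly what is needed when $|A\cap V_i|=p_i$. Since $\sum_i q_i=r-1$ and $\sum_i p_i=r+1$, the slack is only $2$, so for almost every compatible pair $(\mathbf q,\mathbf p)$ some occupied part has $q_i=p_i$ — this is the generic case, not a corner case. Consequently your "key structural fact" fails, the number of genuinely distinct degree constraints is $\Theta(N^{r-1})$ rather than $O_r(1)$, and the rational linear system you intend to solve does not capture $(r-1,\eta)$-near-regularity. Restricting to the three indicator-free profiles does not rescue this: three free densities cannot match $|\PP_{r-1}|$ degree targets once $r\ge 4$, and it is not clear $c_r^{\HH}$ would remain positive.

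The paper avoids all of this by replacing per-part seeds with a \emph{single global} sparsification: writing a vertex as $(i,j)\in[\ell]\times[s]$, it keeps an $(r+1)$-set of the right type exactly when $j_1+\dots+j_{r+1}\pmod s$ lies in a window of length $\lfloor\alpha s\rfloor$. This condition never pins down any proper sub-tuple of an edge; for a given $(r-1)$-set $A$ of the right type and any choice of one further vertex, the number of valid completions is $\lfloor\alpha s\rfloor\pm 1$ regardless of the actual coordinates of $A$. So degrees depend on $\x(A)$ alone up to $O(s)$, and only then does the finite linear system become the correct object. To salvage your seed picture you would need the seeds to have constant $p_i$-degree — possible only at density $0$ or $1$ — or to pass to weighted seeds and then re-confront the discretisation; either way you are led back to a global sparsifier of the paper's kind.
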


The proof of Proposition~\ref{prop:LBexists} is technical and unlikely to be of general interest and so will be given in the appendix.

Our proof of Proposition~\ref{prop:LBif} is based on~\eqref{eq:LB} and the following two lemmas.  The first provides a bound on $\pr{E^{\ell}_{\eps}}$.

\begin{lemma}\label{lem:E} Let $\tau\in (0,1/2)$ and $\ell\in \mathbb{N}$.  Provided $\tau N \le m\le (1-\tau)N$ we have 
$$
\pr{E^{\ell}_{\eps}} \, \ge \, N^{-O_{\ell,\tau}(1)}\exp (-O_{\ell,\tau}(\eps^2 m))
$$
for all $0<\eps\le \tau/2(1-\tau)$.
\end{lemma}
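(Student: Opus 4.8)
The plan is to read off $\pr{E^{\ell}_{\eps}}$ from the multivariate hypergeometric distribution of the part-sizes, bound it below by a single term, estimate that term via Stirling's formula, and exploit a first-order cancellation that is built into the definition of $E^{\ell}_{\eps}$.

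First I would note that $(|B_m\cap V_1|,\dots,|B_m\cap V_{\ell}|)$ is multivariate hypergeometric: a composition $(m_1,\dots,m_{\ell})$ of $m$ with $0\le m_i\le s$ (recall $s=N/\ell$ in this section) has probability $\binom{s}{m_1}\cdots\binom{s}{m_{\ell}}/\binom{N}{m}$. The ``nominal'' values $(1+2\eps)m/\ell$, $(1-\eps)m/\ell$ (twice) and $m/\ell$ (the remaining $\ell-3$ times) already sum to exactly $m$, so one can round them to integers $m_1,\dots,m_{\ell}$ inside the windows defining $E^{\ell}_{\eps}$ and still summing to $m$ (the rounding error is $O(\ell)=O_{\ell}(1)$, absorbed by the $\pm1$ slack), and the hypotheses $m\le(1-\tau)N$, $\eps\le\tau/2(1-\tau)$ guarantee $m_i\le s$ — indeed $\eps\le\tau/2(1-\tau)$ is precisely the inequality ensuring $(1+2\eps)t\le1$, where $t:=m/N$. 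Thus $\pr{E^{\ell}_{\eps}}\ge\binom{s}{m_1}\cdots\binom{s}{m_{\ell}}/\binom{N}{m}$.

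Next, using $\binom{n}{k}=\exp(nH(k/n)+O(\log n))$ uniformly in $1\le k\le n-1$, where $H$ is the binary entropy (in nats), and $N=\ell s$, the logarithm of this ratio is
\[
s\big[H(t_1)+2H(t_2)+(\ell-3)H(t)\big]-\ell s\,H(t)+O_{\ell}(\log N)=s\big[(H(t_1)-H(t))+2(H(t_2)-H(t))\big]+O_{\ell}(\log N),
\]
where $t_1=(1+2\eps)t$ and $t_2=t_3=(1-\eps)t$ are the target densities (the $O(1/N)$ discrepancy between $m_i/s$ and $t_i$ costs only $O_{\ell}(\log N)$, even near the endpoint $t_1=1$). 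Since $H'(x)=\log\tfrac{1-x}{x}$, Taylor expansion at $t$ gives $H(t_1)-H(t)=2\eps t\,H'(t)+R_1$ and $H(t_2)-H(t)=-\eps t\,H'(t)+R_2$, so the first-order terms cancel and the bracket equals $R_1+2R_2$; concavity of $H$ together with $\tfrac13(t_1+2t_2)=t$ shows it is $\le0$, consistent with our needing a lower bound. As $H''(x)=-1/(x(1-x))$ is $O_{\tau}(1)$ on $[\tau/2,1-\tau]$ and $\eps<1$, one gets $|R_2|=O_{\tau}(\eps^2)$, and $|R_1|=O_{\tau}(\eps^2)$ whenever $t_1\le1-\tau/2$; in the complementary range $t_1>1-\tau/2$ one has $2\eps t>\tau/2$, hence $\eps=\Omega_{\tau}(1)$, and the trivial bound $R_1\ge -H(t)-2\eps t|H'(t)|=-O_{\tau}(1)=-O_{\tau}(\eps^2)$ (using $H(t_1)\ge0$) suffices. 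In all cases $R_1+2R_2\ge-O_{\tau}(\eps^2)$, so with $s=N/\ell\le m/(\ell\tau)$,
\[
\log\pr{E^{\ell}_{\eps}}\ \ge\ -O_{\ell,\tau}(\eps^2)\,s+O_{\ell}(\log N)\ \ge\ -O_{\ell,\tau}(\eps^2 m)-O_{\ell}(\log N),
\]
which is the asserted bound $\pr{E^\ell_\eps}\ge N^{-O_{\ell,\tau}(1)}\exp(-O_{\ell,\tau}(\eps^2 m))$.

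The only delicate point — the main obstacle — is the boundary regime where $t_1=(1+2\eps)t$ is close to $1$ and the quadratic Taylor estimate for $R_1$ breaks down because $H''$ blows up. This is harmless because there $\eps$ is forced to be bounded below by a constant depending only on $\tau$, so the crude estimate already produces an exponent of the correct order $\eps^2m=\Theta_{\ell,\tau}(N)$; for instance, when $t=1-\tau$ and $\eps$ is maximal, $E^{\ell}_{\eps}$ forces $V_1\subseteq B_m$, and both $\pr{E^{\ell}_{\eps}}$ and the claimed bound are then $\exp(-\Theta_{\ell,\tau}(N))$. (One could alternatively condition on $|B_m\cap V_1|,\dots,|B_m\cap V_{i-1}|$ successively and apply a local estimate for the hypergeometric at each step; the same cancellation of first-order terms makes the resulting product telescope to $\exp(-O(\eps^2m))$.) Everything else is routine bookkeeping with Stirling's formula.
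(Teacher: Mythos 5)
Your proposal follows essentially the same route as the paper: express $\pr{E^{\ell}_{\eps}}$ as a ratio of binomial coefficients (a single term of the multivariate hypergeometric suffices for a lower bound), apply Stirling via the entropy function, and exploit the fact that the nominal part-sizes sum to $m$ so that the first-order Taylor terms cancel, leaving a deficit of order $\eps^2 s = O_{\ell,\tau}(\eps^2 m)$. The one place where you go beyond the paper is the boundary regime $(1+2\eps)t$ close to $1$: there the $\Theta_c(n^{-1/2})\exp(nH(c))$ form of Stirling cited in the paper (which needs $k=\Omega(n)$ and $n-k=\Omega(n)$) and the quadratic Taylor bound (which needs $H''$ bounded) both degenerate, and you correctly observe that the uniform bound $\binom{n}{k}=\exp(nH(k/n)+O(\log n))$ together with $\eps=\Omega_\tau(1)$ in that regime rescues the estimate; the paper's proof glosses over this case, so your write-up is actually a touch more complete than theirs.
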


The second lemma shows the effect of the uneven distribution (given by $E^{\ell}_{\eps}$) on $N^{\HH}(B_m)$ (or at least its expected value).  We show that for a certain value of $\eps$ the conditional expectation exceeds the unconditioned expectation by at least $2a$.

\begin{lemma}\label{lem:effect} There are constants $C_1=C_1(\ell,\tau)$ and $C_2 = C_2(\ell,\tau)$ such that if $\HH$ is an $(r+1)$-uniform $(r,\eta,\gamma)$-nice $\ell$-part hypergraph, then for all $ \max \{ h/N, \eta h\}\le a\le h$, we have
\[
\Ex{N^{\HH}(B_m)\, |\, E^{\ell}_{\eps}}\, \ge\, L^{\HH}(m)\, +\, 2a\, 
\]
for all $C_1 (a/h)^{1/r} \le \eps \le C_2 $.
\end{lemma}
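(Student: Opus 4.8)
The plan is to evaluate $\Ex{N^{\HH}(B_m)\mid E^{\ell}_{\eps}}$ by conditioning further on the part sizes $|B_m\cap V_i|$, read the answer off as (essentially) $t^{r+1}Q^{\HH}(\eps)$ with $t=m/N$, and then reduce the inequality to a lower bound on $Q^{\HH}(\eps)-Q^{\HH}(0)$ supplied by niceness. Conditioned on $|B_m\cap V_i|=\mu_i$ for all $i$ (with $\sum_i\mu_i=m$), the sets $B_m\cap V_i$ are independent uniform subsets of the parts, so an edge $e$ with $e_i:=|e\cap V_i|$ lies in $B_m$ with probability $\prod_i(\mu_i)_{e_i}/(s)_{e_i}$, where $s=N/\ell$. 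On $E^{\ell}_{\eps}$ each $\mu_i$ is within $1$ of its nominal value, and since $e_i\le r+1$ and $s=\Theta_\ell(N)$ this probability equals $t^{r+1}(1+2\eps)^{e_1}(1-\eps)^{e_2+e_3}+O_{r,\ell}(1/N)$. Summing over $e\in E(\HH)$ gives $\Ex{N^{\HH}(B_m)\mid E^{\ell}_{\eps}}=t^{r+1}Q^{\HH}(\eps)+O_{r,\ell}(h/N)$, and the same computation (or \eqref{eq:Lj}) gives $L^{\HH}(m)=t^{r+1}h+O_{r,\ell}(h/N)=t^{r+1}Q^{\HH}(0)+O_{r,\ell}(h/N)$. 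Hence, writing $Q^{\HH}(x)-Q^{\HH}(0)=\sum_{j=1}^{r+1}c_j^{\HH}x^{j}$,
\[
\Ex{N^{\HH}(B_m)\mid E^{\ell}_{\eps}}-L^{\HH}(m)\;=\;t^{r+1}\sum_{j=1}^{r+1}c_j^{\HH}\eps^{j}\;+\;O_{r,\ell}(h/N),
\]
where the error term is $O_{r,\ell}(a)$ since $a\ge h/N$; so it suffices to lower bound the displayed sum.

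The $j=r$ term is the one that carries the signal. Property (iv) of niceness gives $c_r^{\HH}\ge\gamma N^{r+1}/\ell^{r+1}$, while property (ii) gives $h\le 3\gamma N^{r+1}/(\ell^{2}(r+1)!)$, so $c_r^{\HH}\ge\tfrac{(r+1)!}{3\ell^{r-1}}h$ and therefore $c_r^{\HH}\eps^{r}\ge\tfrac{(r+1)!}{3\ell^{r-1}}C_1^{r}a$ whenever $\eps\ge C_1(a/h)^{1/r}$. The top term is harmless: each monomial $(1+2x)^{e_1}(1-x)^{e_2+e_3}$ has coefficients of absolute value at most $3^{r+1}$, so $|c_{r+1}^{\HH}|\le 3^{r+1}h=O_{r,\ell}(c_r^{\HH})$, and hence $|c_{r+1}^{\HH}|\eps^{r+1}\le\tfrac12 c_r^{\HH}\eps^{r}$ as soon as $\eps\le C_2$ with $C_2=C_2(\ell)$ small enough.

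The crux — and the step I expect to be the main obstacle — is to show the intermediate coefficients $c_1^{\HH},\dots,c_{r-1}^{\HH}$ are negligible (this is vacuous for $r\le 2$). I would expand each factor of $Q^{\HH}$ and use $\binom{e_1}{a}\binom{e_2+e_3}{b}=\#\{(S,T):S\subseteq e\cap V_1,\,|S|=a,\ T\subseteq e\cap(V_2\cup V_3),\,|T|=b\}$ to get, for $1\le j\le r-1$,
\[
c_j^{\HH}\;=\;\sum_{a+b=j}2^{a}(-1)^{b}\sum_{\substack{S\subseteq V_1,\,|S|=a\\ T\subseteq V_2\cup V_3,\,|T|=b}}d_{\HH}(S\cup T).
\]
Since $\HH$ is $(r-1,\eta)$-near-regular it is $(j,\eta)$-near-regular by Lemma~\ref{lem:nearreg}(i), so $d_{\HH}(S\cup T)=\bar{d}_j(1\pm\eta)$ for each such $j$-set. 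Substituting $\bar{d}_j$ for every degree leaves the main term $\bar{d}_j[x^{j}]\bigl((1+2x)^{s}(1-x)^{2s}\bigr)=\bar{d}_j[x^{j}]\bigl((1-3x^{2}+2x^{3})^{s}\bigr)$, which vanishes for $j=1$ and, for $2\le j\le r-1$, has size $O_{r}(\bar{d}_j s^{\lfloor j/2\rfloor})=O_{r,\ell}(h/N^{\lceil j/2\rceil})\le O_{r,\ell}(h/N)$, because a monomial of $(1-3x^{2}+2x^{3})^{s}$ of degree $j$ uses at most $\lfloor j/2\rfloor$ of the $s$ factors. The residual fluctuation is at most $\eta\bar{d}_j\sum_{a+b=j}2^{a}\binom{s}{a}\binom{2s}{b}\le 2^{j}\binom{r+1}{j}\eta h=O_{r}(\eta h)$ by Vandermonde's identity. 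Thus $|c_j^{\HH}|=O_{r,\ell}(h/N+\eta h)=O_{r,\ell}(a)$, using $a\ge\max\{h/N,\eta h\}$, and so $\sum_{j=1}^{r-1}|c_j^{\HH}|\eps^{j}=O_{r,\ell}(a)$ because $\eps\le C_2\le 1$. The point of the perturbation pattern $(+2\eps,-\eps,-\eps,0,\dots,0)$ in the definition of $E^{\ell}_{\eps}$ is exactly that it makes these would-be leading contributions small, so that only the genuine $c_r^{\HH}\eps^{r}$ term from (iv) matters.

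Combining the three steps, $\sum_{j=1}^{r+1}c_j^{\HH}\eps^{j}\ge\tfrac12 c_r^{\HH}\eps^{r}-O_{r,\ell}(a)\ge\bigl(\tfrac{(r+1)!}{6\ell^{r-1}}C_1^{r}-O_{r,\ell}(1)\bigr)a$; multiplying by $t^{r+1}\ge\tau^{r+1}$ and absorbing the $O_{r,\ell}(a)$ error of the first step, the difference $\Ex{N^{\HH}(B_m)\mid E^{\ell}_{\eps}}-L^{\HH}(m)$ is at least $\bigl(\tau^{r+1}\tfrac{(r+1)!}{6\ell^{r-1}}C_1^{r}-O_{r,\ell}(1)\bigr)a$, which is $\ge 2a$ once $C_1=C_1(\ell,\tau)$ is chosen large enough and $C_2=C_2(\ell,\tau)$ small enough (also small enough that $E^{\ell}_{\eps}$ is nonempty). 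The delicate part, as noted, is the degree bookkeeping for $(1-3x^{2}+2x^{3})^{s}$ together with the uniform control of the near-regularity fluctuation; everything else is routine estimation.
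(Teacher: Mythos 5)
Your proposal is correct and follows the same overall route as the paper: compute $\Ex{N^{\HH}(B_m)\mid E^{\ell}_{\eps}} = t^{r+1}Q^{\HH}(\eps) + O(h/N)$, then bound the coefficients $c_j^{\HH}$ using near-regularity (for $1\le j\le r-1$), property (iv) (for $j=r$, the signal term), and a crude bound (for $j=r+1$); the paper packages the coefficient estimates into a separate Lemma~\ref{lem:coefficients}. The one place you deviate is in how the cancellation in the intermediate coefficients is exhibited: you use the factorization $(1+2x)(1-x)^2 = 1-3x^2+2x^3$ and observe that a degree-$j$ monomial of $(1-3x^2+2x^3)^s$ can use at most $\lfloor j/2\rfloor$ of the $s$ factors, whereas the paper shows that the coefficient of $s^j$ in $\sum_{i}(-1)^i 2^{j-i}\binom{s}{j-i}\binom{2s}{i}$ vanishes (via $\sum_i(-1)^i\binom{j}{i}=0$), so the expression is a polynomial of degree $\le j-1$ in $s$. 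Both are ways of recording the same cancellation, which is exactly why the perturbation pattern $(+2\eps,-\eps,-\eps,0,\dots)$ was chosen; your version in fact gives a slightly sharper power of $N$, but both suffice. Two minor nits: the intermediate-coefficient step is vacuous only for $r=1$, not for $r=2$ (for $r=2$ the single coefficient $c_1$ has vanishing main term but a nontrivial $O(\eta h)$ fluctuation, which you do handle correctly); and the invocation of ``Vandermonde'' is after first replacing $2^a$ by $2^j$, which is fine but worth saying.
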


We now deduce Proposition~\ref{prop:LBif} from these two lemmas.

\begin{proof}[Proof of Proposition~\ref{prop:LBif}]  We begin with an easy observation: if a random variable $X$ has mean $\mu$, has $|X|\le h$ and has $\Ex{X|E}\ge \mu+2a$ for some event $E$ then
\[
\pr{X> \mu+a}\, \ge \, \frac{a}{h} \pr{E}\, .
\]
This follows immediately using that $X\le h 1_{X> \mu+a}\, +\, \mu \, +\, a$ and taking the conditional expectation.

Taking $E=E^{\ell}_{\eps}$ where $\eps=\Theta_{\ell,\tau}((a/h)^{1/r})$ is given by Lemma~\ref{lem:effect} (observe that if $\Omega_{r,\tau}(1)$ is sufficiently small in Proposition~\ref{prop:LBif}, the interval for $\eps$ in Lemma~\ref{lem:effect} is nonempty), and using that $h\le aN$, it follows from Lemma~\ref{lem:E} that
\[
\pr{D^{\HH}(B_m) > a} \, \geq \, N^{-O_{\ell,\tau}(1)}\exp (-O_{\ell,\tau}(\eps^2 m))\, =\, N^{-O_{\ell,\tau}(1)}\exp \left(\frac{-O_{\ell,\tau}(1) a^{2/r}m}{h^{2/r}}\right)\,  .
\]
Since $\ell=O_r(1)$, the dependence of the constants on $\ell$ is just a dependence on $r$.  Also, conditions (ii) and (iii) in the definition of $(r,\eta,\gamma)$-nice give us that $h=\Omega_{\tau}(m^{r}\Delta_r)$ and so the required bound follows immediately.\end{proof}

We now turn to the proofs of Lemmas~\ref{lem:E} and~\ref{lem:effect}.  The proof of Lemma~\ref{lem:E} is a relatively straightforward exercise with Stirling's approximation, we include it for completeness.

\begin{proof}[Proof of Lemma~\ref{lem:E}]
Note first that the conditions imposed on $\eps$ ensure that the event $E_{\eps}^{\ell}$ is nonempty. Denoting $t = m/N$, observe that
$$
\pr{E^{\ell}_{\eps}}\, =\, \dfrac{ \dbinom{s}{(1+2\eps)st} \dbinom{s}{(1-\eps)st}^2 \dbinom{s}{st} ^{\ell - 3}}{\dbinom{s \ell}{ts\ell}}.
$$
Using Stirling's approximation for the factorials, if $k = \Omega(n)$, we have
$$
\dbinom{n}{k} \, = \, (1+o(1)) \sqrt{\frac{n}{2\pi k(n-k)}} \cdot \frac{n^n}{k^k(n-k)^{n-k}}.
$$
Note that if $k = cn$, where $c \in (0,1)$, we have
$$
\dbinom{n}{k} \, =\, (1+o(1))\Theta_{c}(n^{-1/2}) \cdot \exp(nH(c)),
$$
where $H(x) \, =\, -x \log x - (1-x) \log (1-x)$. Then
$$
\pr{E^{\ell}_{\eps}} \, =\, (1+o(1)) \Theta_{\ell, \tau}(s^{(1-\ell)/2}) \cdot \exp[s(H((1+2\eps) t) + 2H((1-\eps) t) -3H(t))].
$$
Using Taylor series, $s  = m/t\ell$ and $\tau \le t \le 1 - \tau$, we obtain that 
$$\pr{E^{\ell}_{\eps}}\, \ge\, N^{-O_{\ell,\tau}}\exp (-O_{\ell,\tau}(\eps ^2 m)),$$
as desired.
\end{proof}

We will need to work a little more to prove Lemma~\ref{lem:effect}.  In particular we shall use the following lemma about the coefficients of $Q^{\HH}(x)$ in the case $\HH$ is $(r,\eta,\gamma)$-nice.  Recall that $L^{\HH}(m) = \Ex{N^{\HH}(B_m)}$.

\begin{lemma}\label{lem:coefficients}
Let $\HH$ be an $(r+1)$-uniform $\ell$-part hypergraph which is $(r,\eta,\gamma)$-nice and let $m = tN$, where $\tau\le t\le 1-\tau$.  Then:
\begin{enumerate}[(i)]
\item $c_0^{\HH} \geq L^{\HH}(m)/t^{r+1}$;
\item $|c_j^{\HH}| \, =\, O_{r}(\eta h + h/N)$, if $1 \leq j \leq r-1$; 
\item $|c_{r+1}^{\HH}| \, =\, O_{r}(\Delta_r N^r)$.
\end{enumerate}
\end{lemma}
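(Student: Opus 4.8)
The plan is to write down an explicit combinatorial formula for the coefficients $c_j^{\HH}$ and then read all three estimates off it. For a vertex $v$ set $a_v := 2$ if $v \in V_1$, $a_v := -1$ if $v \in V_2 \cup V_3$, and $a_v := 0$ otherwise, so that $Q^{\HH}(x) = \sum_{e \in E(\HH)} \prod_{v \in e}(1 + a_v x)$. Since each factor is linear, the coefficient of $x^j$ in one such product is the $j$-th elementary symmetric polynomial in $\{a_v : v \in e\}$; expanding, swapping the order of summation, and noting that $\prod_{v \in S} a_v = 0$ unless $S \subseteq V_1 \cup V_2 \cup V_3$ gives
\[
c_j^{\HH} \, =\, \sum_{e \in E(\HH)} \sum_{S \subseteq e,\ |S| = j} \prod_{v \in S} a_v \, =\, \sum_{\substack{S \subseteq V_1 \cup V_2 \cup V_3 \\ |S| = j}} \Big(\prod_{v \in S} a_v\Big)\, d_{\HH}(S)\, ,
\]
where $d_{\HH}(S)$ is the number of edges of $\HH$ containing $S$. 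Part (i) is then immediate: $j = 0$ gives $c_0^{\HH} = d_{\HH}(\emptyset) = h$, and since $\HH$ is $(r+1)$-uniform, $L^{\HH}(m) = h\,(m)_{r+1}/(N)_{r+1} \le h\,t^{r+1}$, so $c_0^{\HH} \ge L^{\HH}(m)/t^{r+1}$.

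For part (ii), fix $1 \le j \le r-1$; by Lemma~\ref{lem:nearreg}(i) the hypergraph $\HH$ is $(j,\eta)$-near-regular, so $|d_{\HH}(S) - \bar d_j| \le \eta \bar d_j$ for every $j$-set $S$, where $\bar d_j = h\binom{r+1}{j}/\binom{N}{j}$. Writing $s = N/\ell$ for the common size of the parts and $E_j := \sum_{S \subseteq V_1 \cup V_2 \cup V_3,\ |S| = j} \prod_{v \in S} a_v$, the formula splits as
\[
c_j^{\HH} \, =\, \bar d_j\, E_j \, +\, \sum_{\substack{S \subseteq V_1 \cup V_2 \cup V_3 \\ |S| = j}} \Big(\prod_{v \in S} a_v\Big)\big(d_{\HH}(S) - \bar d_j\big)\, ,
\]
where the error sum has modulus at most $2^j \eta \bar d_j \binom{3s}{j} = O_r(\eta h)$, since $\bar d_j = O_r(h/N^j)$ and $\binom{3s}{j} = O_r(N^j)$. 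For the main term, $E_j$ is the coefficient of $x^j$ in $\prod_{v \in V_1 \cup V_2 \cup V_3}(1 + a_v x) = (1+2x)^s(1-x)^{2s} = \big((1+2x)(1-x)^2\big)^s = (1 - 3x^2 + 2x^3)^s$. The key point — and the reason the construction perturbs three parts with weights $1+2\eps,\,1-\eps,\,1-\eps$ rather than perturbing a single part — is that $(1+2x)(1-x)^2$ has vanishing linear coefficient, so $E_1 = 0$ and, more generally, a degree-$j$ monomial of $(1-3x^2+2x^3)^s$ must use at most $\lfloor j/2 \rfloor$ of the nonconstant factors, whence $|E_j| = O_r(N^{\lfloor j/2 \rfloor})$. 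Since $\bar d_j = O_r(h/N^j)$, this gives $|\bar d_j E_j| = O_r(h\,N^{\lfloor j/2 \rfloor - j}) = O_r(h/N)$ for every $j \ge 1$, and combining with the error bound proves (ii).

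For part (iii), take $j = r+1$; since $\HH$ is $(r+1)$-uniform, $d_{\HH}(S) \in \{0,1\}$ for an $(r+1)$-set $S$, so the formula yields $|c_{r+1}^{\HH}| \le 2^{r+1}\,\#\{e \in E(\HH) : e \subseteq V_1 \cup V_2 \cup V_3\}$. To bound this count, double count pairs $(T,e)$ with $e$ an edge contained in $V_1 \cup V_2 \cup V_3$ and $T$ an $r$-subset of $e$: each such edge is counted $r+1$ times, while each $r$-set $T \subseteq V_1 \cup V_2 \cup V_3$ lies in at most $\Delta_r$ edges, so the count is at most $\binom{3s}{r}\Delta_r/(r+1) = O_r(\Delta_r N^r)$, giving $|c_{r+1}^{\HH}| = O_r(\Delta_r N^r)$.

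I do not expect a genuine obstacle: once the displayed identity for $c_j^{\HH}$ is established, each of the three bounds is short. The one step that is not pure bookkeeping is recognising the factorisation $(1+2x)(1-x)^2 = 1 - 3x^2 + 2x^3$, whose missing linear term is precisely what forces $c_1^{\HH}, \dots, c_{r-1}^{\HH}$ down to size $O_r(\eta h + h/N)$; throughout one uses that $r$, $\ell$ and the uniformity $r+1$ are $O_r(1)$ to absorb binomial factors into the implicit constants.
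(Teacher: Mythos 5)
Your proof is correct and follows essentially the same route as the paper: write $c_j^{\HH}$ as a degree-weighted sum over $j$-sets, use $(j,\eta)$-near-regularity to extract a main term proportional to $\bar d_j$ plus an $O_r(\eta h)$ error, and show the main term is small because of a cancellation in the generating function. Your observation that $(1+2x)(1-x)^2 = 1-3x^2+2x^3$ has no linear term is a clean repackaging of the paper's binomial identity $\sum_{i=0}^j(-1)^i\binom{j}{i}=0$, which shows the coefficient of $s^j$ in $E_j$ vanishes; both prove $|E_j| = O_r(s^{j-1})$ (your version actually gives the sharper $O_r(s^{\lfloor j/2\rfloor})$, though this extra strength is not needed), and parts (i) and (iii) are the same easy computations.
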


\begin{proof}
(i) For the first part, observe that $c_0^{\HH} = h$ and $L^{\HH}(m) = h \frac{(m)_{r+1}}{(N)_{r+1}}$. Because
$$
\frac{(m)_{r+1}}{(N)_{r+1}} \leq \left( \frac{m}{N} \right)^{r+1} \, =\, t^{r+1},
$$ 
it follows that $c_0^{\HH} \geq L^{\HH}(m)/t^{r+1}$. \\* \\*
(ii) In this part, we will use that $\HH$ is $(r-1, \eta)$-near-regular.  We first observe that
\begin{equation*}
\begin{split}
c_j^{\HH} \, &=\, \sum_{e \in E(\HH)} \sum_{i=0}^j (-1)^i 2^{j-i} \binom{e_2 + e_3}{i} \binom{e_1}{j-i} \\
&=\, \sum_{i=0}^j (-1)^i 2^{j-i} \sum_{e \in E(\HH)} \binom{e_2 + e_3}{i} \binom{e_1}{j-i}.
\end{split}
\end{equation*}
Note that by double counting,
$$
\sum_{e \in E(\HH)} \binom{e_2 + e_3}{i} \binom{e_1}{j-i}\, = \,
\sum_{\substack{A \subset V_1, |A| = j-i \\ B \subset V_2 \cup V_3, |B| = i}} d_j(A \cup B).
$$
Because $\HH$ is $(r-1, \eta)$-near-regular and $1 \leq j \leq r-1$, $d_j(A \cup B) \in (1 \pm \eta) \bar{d}_j^{\HH}$, which gives us
$$
\sum_{e \in E(\HH)} \binom{e_2 + e_3}{i} \binom{e_1}{j-i}\, \in\, \binom{s}{j-i} \binom{2s}{i} \bar{d}_j^{\HH} (1 \pm \eta).
$$
Using that $\bar{d}_j^{\HH} = O_{r}(h/N^j)$, we have
$$
\sum_{e \in E(\HH)} \binom{e_2 + e_3}{i} \binom{e_1}{j-i} \, =\, \binom{s}{j-i} \binom{2s}{i} \bar{d}_j^{\HH} \pm O_{r}(\eta h).
$$
Therefore
$$
c_j^{\HH} \, =\, \sum_{i=0}^j (-1)^i 2^{j-i} \binom{s}{j-i} \binom{2s}{i} \bar{d}_j^{\HH} \pm O_{r}(\eta h).
$$
The sum
$$
\sum_{i=0}^j (-1)^i 2^{j-i} \binom{s}{j-i} \binom{2s}{i}
$$
is a polynomial in $s$ of degree at most $j$. However the coefficient of $s^j$ in this polynomial is
$$
\sum_{i=0}^j \frac{(-1)^i \cdot 2^{j-i} \cdot 2^i}{(j-i)!i!} \, =\, \frac{2^j}{j!} \sum_{i=0}^j (-1)^i \binom{j}{i} \, =\, 0.
$$
Hence the sum is actually a polynomial in $s$ of degree at most $j-1$. Using again that $\bar{d}_j^{\HH} = O_{r}(h/N^j)$ and using that $s = O_{r}(N)$, we obtain that $|c_j^{\HH}| = O_{r}(\eta h + h/N)$, as desired.
\\*
\\*
(iii) In this last part, as the hypergraph is $(r+1)$-uniform, $|c_{r+1}^{\HH}| = O_{r}(h)$ and since $h = O_r(\Delta_r N^r)$, it follows that $|c_{r+1}^{\HH}| = O_{r}(\Delta_r N^r)$.
\end{proof}

Armed with Lemma~\ref{lem:coefficients} we now prove Lemma~\ref{lem:effect} about the conditional expectation $\Ex{N^{\HH}(B_m)\, |\, E^{\ell}_{\eps}}$.

\begin{proof}[Proof of Lemma~\ref{lem:effect}] Let $\HH$ be a $(r,\eta,\gamma)$-nice $\ell$-part hypergraph and let $\eps=C_1(a/h)^{1/r}$ for a constant $C_1$ which we can choose later, as a function of $\ell$ and $\tau$.  We must prove that
\[
\Ex{N^{\HH}(B_m)\, |\, E^{\ell}_{\eps}}\, \ge\, L^{\HH}(m)\, +\, 2a\, .
\]
We first observe that
\begin{equation*}
\begin{split}
\Ex{N^{\HH}(B_m)|E^{\ell}_{\eps}} \,&=\, \sum_{e \in E(\HH)} \prod_{i=1}^{\ell} \frac{\binom{s-e_i}{|B_m \cap V_i|-e_i}}{\binom{s}{| B_m \cap V_i|}} \\
&= \, \sum_{e \in E(\HH)} \prod_{i=1}^{\ell} \frac{\binom{|B_m \cap V_i|}{e_i}}{\binom{s}{e_i}}.
\end{split}
\end{equation*}
As we are conditioned on the event $E^{\ell}_{\eps}$, we have $|B_m \cap V_1| = (1+2\eps)st\pm 1$, $|B_m \cap V_2| = |B_m \cap V_3| = (1- \eps)st\pm 1$ and $|B_m \cap V_i| = st\pm 1$ for all $4 \leq i \leq \ell$. Using that $(N)_k = N^k (1 \pm O(1/N))$, we obtain
$$
\Ex{N^{\HH}(B_m)|E^{\ell}_{\eps}} \, =\, t^{r+1}Q^{\HH}(\eps)\,  \pm \, O_r(h/N).
$$
By Lemma \ref{lem:coefficients}, choosing $C_2$ sufficiently small, there are positive constants $\alpha$ and $\beta$ depending on $r$ and $\tau$, such that
$$
\Ex{N^{\HH}(B_m)|E^{\ell}_{\eps}} \, \geq \, L^{\HH}(m) + \alpha \eps^r \Delta_r N^r - \beta (\eta h +h/N).
$$
As $\eps=C_1(a/h)^{1/r}$ and $\Delta_r \ge h/N^r$,
$$
\Ex{N^{\HH}(B_m)|E^{\ell}_{\eps}} \, \geq \, L^{\HH}(m) + \alpha C_1 a - \beta (\eta h +h/N).
$$
Since $a \ge \max \{ h/N, \eta h \}$, we can choose $C_1 = (2+2\beta)/\alpha $, obtaining
$$
\Ex{N^{\HH}(B_m)|E^{\ell}_{\eps}} \, \geq \, L^{\HH}(m) + 2a,
$$
as desired.
\end{proof}

\section{Concluding remarks and open questions}\label{sec:final}

We showed in Section~\ref{sec:LB} that Theorem~\ref{thm:nearreg} is best possible up to the implicit constants, at least when $m=\Theta(N)$.  However, for particular cases, such as the $3$-term arithmetic progressions in $\Z_N$ we do not believe that the bounds obtained are best possible.  A lower bound of the form
\[
\pr{D^{3}(B_m)>a}\, \ge\, \exp\left(\frac{-O(1)a^{2/3}N^{2/3}}{m}\right)
\]
may be proved by considering the probability that the interval $\{1,\dots ,\lfloor N/3\rfloor\}$ contains significantly more points than expected.  Specifically $(1+\eps)N/3$ points where $\eps\approx a^{1/3}/N^{2/3}$.  Writing $D^{3}(B_m)$ for the deviation of number of three term arithmetic progressions, it remains an interesting open problem to determine the value of
\[
\log{\pr{D^{3}(B_m)>a}}
\]
up to a constant factor.  We stress that this problem is open even in the dense case $m=\Theta(N)$, for $N\ll a\ll N^2$.

Finally, we stress that we do not believe that our result is best possible in the sparse case, $m=o(N)$.  For example, in the case of regular $3$-uniform hypergraphs we believe it is possible to improve the bound
\[
\pr{D^{\HH}(B_m)>a}\, \le\, N^{O_k(1)}\, \exp\left(\frac{-\Omega_k(1) a}{m\Delta_{2}}\right)
\]
to
\[
\pr{D^{\HH}(B_m)>a}\, \le\, N^{O_k(1)}\, \exp\left(\frac{-\Omega_k(1) aN^{1/2}}{m^{3/2}\Delta_{2}}\right)
\]
by using Freedman's inequality in place of the Azuma--Hoeffding inequality at a certain point.  It would be of interest to determine the best possible result across the whole range of sparse densities.  In particular we do not know whether the stronger bound
\[
\pr{D^{\HH}(B_m)>a}\, \le\, N^{O_k(1)}\, \exp\left(\frac{-\Omega_k(1) aN}{m^{2}\Delta_{2}}\right)
\]
may hold in general for regular $3$-uniform hypergraphs.  This final bound, if true, would be best possible up to the implicit constants.

For simplicity we have stated our open problems for $3$-uniform hypergraphs.  The analogous problems for $k$-uniform hypergraphs with one of our regularity properties are also open.

Improvements in results for the $m$-model $B_m$ for $m=o(N)$ would almost certainly allow one to extend the range of deviations covered by Theorem~\ref{thm:pworld} for the $p$-model $B_p$.



\begin{thebibliography}{99}

\bibitem{A1967} K. Azuma, { \em Weighted sums of certain dependent random variables}, Tohoku Math. J. \textbf{19} (1967), no. 3, 357--367.

\bibitem{B1960} R.R. Bahadur, {\em Some approximations to the binomial distribution function}, Ann. Math. Statist. \textbf{31} (1960), no. 1, 43--54.

\bibitem{BMS2014} J. Balogh, R. Morris and W. Samotij, {\em Random sum-free subsets of abelian groups}, Israel J. Math. \textbf{199} (2014), no. 2, 651--685. 

\bibitem{BM2019} B.B. Bhattacharya and S. Mukherjee, { \em A Note on Replica Symmetry in Upper Tails of Mean-Field Hypergraphs}, {\tt arXiv:1812.09841}.

\bibitem{BGSZ2016} B.B. Bhattacharya, S. Ganguly, X. Shao, and Y. Zhao, { \em Upper tails for arithmetic progressions in a random set},  Int. Math. Res. Not. \textbf{1} (2020), 167--213. 

\bibitem{BSS2020} R. Berkowitz, A. Sah and M. Sawhney,  {\em Number of arithmetic progressions in dense random subsets of $\ZZ/n\ZZ$}, {\tt arXiv:1907.11807}

\bibitem{B2013} S. Boucheron, G. Lugosi and P. Massart, { \em Concentration inequalities}, Oxford University Press, Oxford, 2013.

\bibitem{BG2020} J. Bri\"et and S. Gopi, {\em Gaussian width bounds with applications to arithmetic progressions in random settings},  Int. Math. Res. Not. \textbf{22} (2020), 8673--8696. 

\bibitem{CV2011} S. Chatterjee and S.R.S. Varadhan, {\em The large deviation principle for the Erd\"os-R\'enyi random graph},  European J. Combin. \textbf{32} (2011), no. 7, 1000--1017. 

\bibitem{C2016} S. Chatterjee, { \em An introduction to large deviations for random graphs}, Bull. Amer. Math. Soc. (N. S.) \textbf{53} (2016), no. 4, 617--642.

\bibitem{DKLRS2018} D. Dellamonica, Y.  Kohayakawa, S.J. Lee, V. R\"odl and W. Samotij, {\em  The number of $B_h$--sets of a given cardinality},  Proc. Lond. Math. Soc. (3) 116 (2018), no. 3, 629--669.

\bibitem{DE2009} H.~D\"oring and P.~Eichelsbacher, {\em Moderate deviations in a random graph and for the spectrum of Bernoulli random matrices}, Electron. J. Probab. \textbf{14} (2009), 2636--2656.

\bibitem{FMN2016} V.~F\'eray, P.L.~M\'eliot and A.~Nikeghbali, {\em Mod-$\phi$ convergence I: Normality zones and precise deviations}, Springer Briefs in Probability and Mathematical Statistics, Springer, 2016.


\bibitem{GGS2019} C. Goldschmidt, S. Griffiths and A. Scott, {\em Moderate deviations of subgraph counts in the Erd\H os-R\'enyi random graphs $G(n,m)$ and $G(n,p)$}, Trans. Amer. Math. Soc. \textbf{373} (2020), 5517--5585.

\bibitem{GKS2020} S. Griffiths, C. Koch and M. Secco,  {\em Deviation probabilities for arithmetic progressions and irregular discrete structures}, {\tt arXiv:2012.09280}

\bibitem{GRR1996} R. Graham, V. R\"{o}dl and A. Ruci\'{n}ski, { \em On Schur properties of random subsets of integers}, J. Number Theory \textbf{61} (1996), 388--408.

\bibitem{HMS2019} M. Harel, F. Mousset and W. Samotij, {\em Upper tails via high moments and entropic stability}, {\tt arXiv:1904.08212}.

\bibitem{H1963} W. Hoeffding, { \em Probability inequalities for sums of bounded random variables}, J. Amer. Statist. Assoc. \textbf{58} (1963), 13--30.


\bibitem{JOR2002}S. Janson, K.Oleszkiewicz and A. Ruci\'nski, {\em Upper tails for subgraph counts in random graphs}, Israel J. Math. \textbf{142} (2004), 61--92.

\bibitem{JR1988} S. Janson and A. Ruci\'nski, {\em When are small subgraphs of a random graph normally distributed?}, Probab. Theory Related Fields \textbf{78} (1988), no. 1, 1--10.

\bibitem{JR2011} S. Janson and A. Ruci\'nski, { \em Upper tails for counting objects in randomly induced subhypergraphs and rooted random graphs}, Ark. Mat. \textbf{49} (2011), 79--96.

\bibitem{JW2015} S.~Janson and L.~Warnke, {\em The lower tail: Poisson approximation revisited}, Random Structures Algorithms \textbf{48} (2015), no.2, 219--246.


\bibitem{KV2000} J.H.Kim and V.H. Vu, {\em Concentration of multivariate polynomials and its applications}, Combinatorica \textbf{20} (2000), no. 3, 417--434.

\bibitem{K2013} Y. Kohayakawa,  S. J. Lee,   V. R\"odl and   Wojciech Samotij, {\em The number of Sidon sets and the maximum size of Sidon sets contained in a sparse random set of integers}, Random Structures Algorithms \textbf{46} (2015), no. 1, 1--25. 

\bibitem{RR1997} V. R\"odl and A. Ruci\'nski, {\em Rado partition theorem for random subsets of integers} Proc. London Math. Soc. \textbf{74}, (1997), 481--502.

\bibitem{RSV2015} J. Ru\'{e}, O. Serra and L. Vena, { \em Counting configuration-free sets in groups}, European J. Combin. \textbf{66} (2017), 281--307. 

\bibitem{RSZ2018} J. Ru\'{e}, C. Spiegel and A. Zumalac\'arregui, {\em Threshold functions and Poisson convergence for systems of equations in random sets}  Math. Z. \textbf{288} (2018), no. 1-2, 333--360.

\bibitem{R1993} I.Z. Ruzsa, {\em Solving a linear equation in a set of integers I}, Acta Arithmetica LXV.3 (1993), 259--282.


\bibitem{S2016} M. Schacht, { \em Extremal results for random discrete structures}, Ann. of Math. (2) \textbf{184} (2016), 333--365.


\bibitem{W2016}  L. Warnke, { \em On the method of typical bounded differences}, Combin. Probab. Comput. \textbf{25}
(2016), no. 2, 269--299.

\bibitem{W2017} L. Warnke, {\em Upper tails for arithmetic progressions in random subsets}, Israel J. Math. \textbf{221} (2017) 221-- 317. 

\bibitem{W2020} L. Warnke, {\em On the missing log in upper tail estimates}, J. Combin. Theory Ser. B \textbf{140} (2020), 98--146.



\end{thebibliography}

\section{Appendix: Lower bound construction}
 
In this appendix we construct the hypergraphs whose existence is claimed by Proposition~\ref{prop:LBexists}.  We shall recall the necessary definitions and state a proposition (Proposition~\ref{prop:appendix}) which implies Proposition~\ref{prop:LBexists}.

Let us recall that we call a hypergraph $\HH$ an $\ell$-part hypergraph if its vertex set $V=[N]$ may be partitioned $V=V_1\cup \dots \cup V_{\ell}$ into $\ell$ equal parts of size $s:=N/\ell$ in such a way that when we label the vertices $v_{i,j}:i\in [\ell],j\in [s]$ the hypergraph $\HH$ is invariant under permutation of the parts $i\in [\ell]$.

Recall also that we define the polynomial
$$
Q^{\HH}(x) \, =\, \sum_{e \in E(\HH)} (1+2x)^{e_1} (1-x)^{e_2 + e_3}
$$
where $e_i:=|e\cap V_i|$ for $i\in [\ell]$, and that $c_j^{\HH}$ denotes the coefficient of $x^j$ in $Q^{\HH}(x)$.

Finally, we defined an $(r+1)$-uniform $\ell$-part hypergraph $\HH$ to be $(r,\eta,\gamma)$-\emph{nice} if 
\begin{enumerate}
\item[(i)] $\HH$ is $(r-1,\eta)$-near-regular,
\item[(ii)] the density of $\HH$ is between $\gamma/\ell^2$ and $3\gamma/\ell^2$,
\item[(iii)] $\Delta_r\le \gamma N$, and
\item[(iv)] $c_r^{\HH}\ge \gamma N^{r+1}/\ell^{r+1}$.
\end{enumerate}

The proposition we must prove states that such hypergraphs exist.  Specifically, for all $r\ge 2$ there exists $\ell=\ell(r)$ such that for all sequences $\Theta(1/n)\le \gamma_n\le \ell^{-2}$ there are infinitely many $(r+1)$-uniform $\ell$-part hypergraphs $\HH$ which are $(r,\eta,\gamma_N)$-nice with $\eta=O_r(1/\gamma_N N)$.

To this end, let us fix $r\ge 2$.  We shall work with $\ell$ throughout the definition without fixing its value.  It will be clear at the end of the proof that if $\ell$ is taken sufficiently large then for all sufficiently large multiples $N=s\ell$ of $\ell$ the construction satisfies all 4 conditions above.  With this in mind we may always assume that $s=N/\ell$ is sufficiently large.

We work with a fixed value of $\gamma\in [10\ell^2/s,1/2]$ and the example we provide will contain $\approx 2\gamma N^{r+1}/(r+1)!\ell^{2}$ edges.  This gives us the claimed range of densities.

We now state more precisely the type of construction we shall give.  In particular we shall fix the choice of $\ell$ as $\ell=4(r+1)!$.

\begin{prop}\label{prop:appendix}
Let $r\ge 2$, and fix $\ell=4(r+1)!$.  There exists a constant $C=C(r)$ such that for all $0\le \gamma\le 1/2$ and all $s\ge 10\ell^2/\gamma$ there exists an $\ell$-part $(r+1)$-uniform hypergraph $\HH$ on $s\ell$ vertices which is $(r,C/\gamma s,\gamma)$-nice.
\end{prop}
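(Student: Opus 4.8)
The plan is to build $\HH$ from ``sum--controlled shape blocks''. Identify each part $V_i$ with $\ZZ_s$, so a vertex is a pair $(i,a)$ with $a\in\ZZ_s$; for an $(r+1)$--set $e$ put $\sigma(e):=\sum_{(i,a)\in e}a\in\ZZ_s$ and let its \emph{shape} be the partition of $r+1$ recording $\{|e\cap V_1|,\dots,|e\cap V_\ell|\}$ (every such partition has at most $r+1<\ell$ parts, hence is realizable). For a shape $\pi$ and a set $T\subseteq\ZZ_s$ set $\HH_{\pi,T}:=\{e:\ |e|=r+1,\ \mathrm{shape}(e)=\pi,\ \sigma(e)\in T\}$; permuting parts preserves both the shape and $\sigma$, so $\HH_{\pi,T}$ is an $\ell$--part hypergraph, and I would take $\HH:=\bigcup_{\pi\in\mathcal P}\HH_{\pi,T_\pi}$ with $|T_\pi|=\lfloor\delta_\pi s\rfloor$, the family $\mathcal P$ of shapes and the densities $\delta_\pi\in(0,O(\gamma/\ell^2)]$ to be chosen. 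The one elementary input is \emph{exact slice equidistribution}: for any fixed profile, the number of $(r+1)$--sets of that profile which contain a prescribed small set and have $\sigma$ in a prescribed translate of $T$ equals $|T|$ times a quantity depending only on the profile, up to a relative error $O_r(1/s)$ arising solely from forbidden vertex coincidences.

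Using this, the $(r-1)$--degree of an $(r-1)$--set $S$ depends only on its shape $q$, and
\[
d_{r-1}(S)=\bigl(1+O_r(1/s)\bigr)\,s\sum_{\pi}|T_\pi|\,\mu_\pi(q),
\]
where $|T_\pi|:=0$ for $\pi\notin\mathcal P$ and $\mu_\pi(q)$ counts the unordered pairs of distinct parts $\{i,j\}$ for which adding one vertex to $V_i$ and one to $V_j$ sends the profile of $S$ to shape $\pi$, plus half the number of parts $i$ for which adding two vertices to $V_i$ does so. Likewise the $r$--degree of an $r$--set of shape $q'$ is $(1+O_r(1/s))\sum_i|T_{\mathrm{shape}(q'+V_i)}|$, the edge count is $h=(1+O_r(1/s))\sum_\pi\delta_\pi\cdot(\#\,(r+1)\text{--sets of shape }\pi)$, and $Q^{\HH}(x)=(1+O_r(1/s))\sum_\pi n_\pi\Phi_\pi(x)$, where $n_\pi$ is the number of shape--$\pi$ edges and $\Phi_\pi(x)$ is the average of $(1+2x)^{p_1}(1-x)^{p_2+p_3}$ over the placements of $\pi$ into the $\ell$ parts. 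As $\gamma\le1/2$ and $s\ge10\ell^2/\gamma$, the $O_r(1/s)$ errors fit inside the allowed slack $C/\gamma s$, so the four conditions of $(r,C/\gamma s,\gamma)$--niceness reduce to: (i) $\sum_\pi|T_\pi|\mu_\pi(q)$ is \emph{independent of $q$}; (ii)--(iii) $\max_\pi\delta_\pi=O(\gamma/\ell^2)$ and $h\asymp\gamma\binom N{r+1}/\ell^2$; (iv) $c_r^{\HH}=\sum_\pi n_\pi[x^r]\Phi_\pi\ge\gamma N^{r+1}/\ell^{r+1}$. Taking $\mathcal P$ to be \emph{all} shapes with $\delta_\pi\equiv2\gamma/\ell^2$ satisfies (i) (every pair and every part is admissible, so $\sum_\pi\mu_\pi(q)=\binom\ell2+\ell/2$ for all $q$), satisfies (ii)--(iii) (via $\sum_p\prod_i\binom s{p_i}=\binom{s\ell}{r+1}$), and forces $1\le|T_\pi|\le s$.

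The remaining, and delicate, point is (iv). For the all--shapes baseline the substitution $w=1+z$, $\theta=xz/w$ collapses the product of local factors to $(1+z)^{\ell s}(1-3\theta^2+2\theta^3)^s$; since $(1+2x)(1-x)^2=1-3\theta^2+2\theta^3$ has no linear term, $Q^{\HH}(x)-h$ is of low order and $c_r^{\HH}=\tfrac{2\gamma}{\ell^2}(\ell s-r)\sum_{2a+3b=r}\binom{s}{a,b,s-a-b}(-3)^a2^b$ is far below the required $\Theta(\gamma s^{r+1})$ (and of either sign). The remedy is to \emph{skew the densities towards shapes concentrated on $V_1$}: the shape $(r+1)$ contributes $2^r(r+1)$ to $[x^r]$ per edge inside $V_1$, and, more usefully, the shape $(r,1)$ has $[x^r]\Phi_{(r,1)}=(\ell-r-1)(2^r+2(-1)^r)/\ell(\ell-1)>0$ for every $r\ge2$; boosting $\delta_{(r,1)}$ (and perhaps $\delta_{(r+1)}$) by an amount still within the density budget $O(\gamma/\ell^2)$ but large enough to force $c_r^{\HH}\ge\gamma N^{r+1}/\ell^{r+1}$ is possible precisely because $\ell=4(r+1)!$ dwarfs the relevant thresholds (e.g. $(\ell-r-1)2^r/r!\gg\gamma$). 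Such a boost violates (i), so one simultaneously readjusts a bounded collection of other shape--weights to restore ``$\sum_\pi|T_\pi|\mu_\pi(q)$ independent of $q$'': each boosted shape is reachable, by adding two vertices, only from a handful of source shapes $q$ — e.g. $(r,1)$ only from $q=(r-1)$ and $q=(r-2,1)$ — so the correction couples only finitely many shapes, and the resulting linear system is solvable with small, sign--controlled corrections because $p(r+1)>p(r-1)$ supplies surplus dimension. The degenerate case $r=2$ I would instead dispatch by hand: a $1$--set has only the shape $(1)$, so (i) is automatic, and $\mathcal P=\{(3)\}$ with $|T|=\lfloor\gamma s\rfloor$ — all triples inside a single part with prescribed coordinate--sum — has density $(1+O(1/s))\gamma/\ell^2$, $\Delta_2=\gamma s+O(1)\le\gamma N$, and $c_2^{\HH}=(1+O(1/s))\cdot18\cdot|T|s^2/6=(3+o(1))\gamma s^3\ge\gamma s^3=\gamma N^3/\ell^3$ with $\ell=24$.

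The step I expect to be the main obstacle is this parameter selection for (iv) when $r\ge3$. Near--$(r-1)$--regularity is extremely rigid: a discrepancy of just $1$ in $\sum_\pi|T_\pi|\mu_\pi(q)$ between two shapes $q$ shifts $d_{r-1}$ by $\Theta(\gamma s^2/\ell^2)$, far exceeding $\eta\bar d_{r-1}$, so the extension sums must be \emph{exactly} equal; and against this constraint one must still extract a hypergraph with a positive fraction of its edges inside $V_1\cup V_2\cup V_3$, so that $c_r^{\HH}$ attains order $h$ with the correct sign. Verifying feasibility of this constrained problem — via the explicit coefficients $[x^r]\Phi_\pi$ coming from the $(1+z)^{\ell s}(1-3\theta^2+2\theta^3)^s$ reduction, together with the bookkeeping of which shapes extend to which — is where essentially all of the work, and the role of the large constant $\ell=4(r+1)!$, resides.
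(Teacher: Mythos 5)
Your sum--controlled slicing (picking $\sigma(e)$ in a prescribed window of density $\alpha$) is exactly the paper's device of ``$\alpha$-good'' edges, and your diagnosis of the main tension --- the rigidity of near-$(r-1)$-regularity versus the need to force $c_r^{\HH}$ up to $\Theta(\gamma s^{r+1})$ --- is also correct. But the proposal does not resolve that tension: it defers to the solvability of a coupled linear system among shape weights (restore ``$\sum_\pi |T_\pi|\mu_\pi(q)$ independent of $q$'' after skewing towards $(r+1)$ and $(r,1)$), and the feasibility of that system with the required sign and size constraints is exactly what you say you cannot verify. As written, there is a genuine gap at the step you yourself flag.

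The paper avoids the coupled system by a different \emph{choice of which shapes to use}: rather than starting from all shapes and perturbing, it indexes the building blocks by $\x\in\PP_{r-1}$ and includes only the shapes $\x^+$, where $\x^+=(r+1)$ if $\x=(r-1)$ and $\x^+=(\x,1,1)$ otherwise. Padding with two extra singleton parts has two consequences that do the work. First, it makes the degree constraints triangular: an $(r-1)$-set $A$ of type $\x$ with $|\x|=a$ lies only in edges of shapes $\y^+$ with $\y$ obtained from $\x$ by absorbing the two singletons, which forces $|\y|\in\{a,a-1,a-2\}$; the dominant term in $d(A)$ is $\alpha_\x\binom{\ell-a}{2}s^2$ and all corrections come from strictly smaller $|\y|$ (Lemmas~\ref{lem:setdegs} and~\ref{lem:abstractdegs}). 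One can therefore choose the $\alpha_\x$ inductively on $|\x|$ so that every $(r-1)$-degree equals $\gamma s^2+O(s)$, with no simultaneous balancing at all (Lemma~\ref{lem:weight}). Second, for $|\x|\ge 3$ every edge of shape $\x^+$ meets at least $|\x|+2\ge 5$ parts, hence has at most $r-1$ vertices in $V_1\cup V_2\cup V_3$, so such shapes contribute \emph{zero} to $c_r^{\HH}$. Condition~(iv) then reduces to the shape $(r+1)$, which gives an explicit $\Theta_r(\gamma s^{r+1})$ with a definite sign, minus the $|\x|=2$ shapes, and $\ell=4(r+1)!$ is exactly what makes those subordinate. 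This choice of shape family --- replacing your ``all shapes, then skew and correct'' --- is the ingredient your proposal is missing, and it is what dissolves the constrained feasibility problem you identify as the obstacle.
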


It is easily checked that Proposition~\ref{prop:LBexists} follows from this proposition.

A major challenge in the proof of Proposition~\ref{prop:appendix} is to ensure that the hypergraph $\HH$ we construct is $(r-1,C/\gamma s)$-near-regular.  This is more difficult than one might imagine.  We shall define $\HH$ with enough symmetry that any two sets with the same intersection pattern with the parts have (essentially) the same degree in $\HH$.  This reduces the number of degrees we need to check to a finite number (the number of partitions of the number $r-1$).  In order to formalise these details, and define $\HH$, we introduce the concept of \emph{type}.

\subsection{Types and one-type hypergraphs}

We continue to use the partition $V=V_1 \cup \dots V_{\ell}$ into $\ell$ parts defined above.  Given a set $e$ of elements of $V$ consider the vector $(e_1,\dots ,e_\ell):=(|e\cap V_1|,|e\cap V_2|,\dots ,|e\cap V_\ell|)$.  We define $\x(e)$, the \emph{type} of $e$, to be the vector obtained from $(e_1,\dots ,e_\ell)$ by placing its entries in decreasing order and removing the $0$s.  For example

\begin{center}
\definecolor{qqqqff}{rgb}{0.,0.,1.}
\begin{tikzpicture}[scale = 0.75,line cap=round,line join=round,>=triangle 45,x=1.0cm,y=1.0cm]
\clip(0.5707978963185597,-3.9187528174304935) rectangle (9.135786626596554,2.2570548459804654);
\draw [rotate around={-166.7572837874792:(2.087858889133701,0.9116157159953564)},line width=1.pt] (2.087858889133701,0.9116157159953564) ellipse (1.4126982135558093cm and 1.2084591543887162cm);
\draw [rotate around={-164.69338833613045:(2.310351311732481,-2.5178899656394145)},line width=1.pt] (2.310351311732481,-2.5178899656394145) ellipse (1.4085878236016962cm and 1.2202846649203718cm);
\draw [rotate around={-164.69338833613136:(7.566549658839922,0.8540108608068558)},line width=1.pt] (7.566549658839922,0.8540108608068558) ellipse (1.4085878236017062cm and 1.2202846649203827cm);
\draw [rotate around={-164.69338833613125:(7.450847179501085,-2.3526007094410653)},line width=1.pt] (7.450847179501085,-2.3526007094410653) ellipse (1.408587823601709cm and 1.2202846649203716cm);
\draw [rotate around={6.555187897679517:(4.0887084479498625,-0.3916036783241044)},line width=1.pt,color=qqqqff] (4.0887084479498625,-0.3916036783241044) ellipse (3.120505232255373cm and 1.5114241791975713cm);
\begin{scriptsize}
\draw [fill=black] (1.495966942148762,1.4209917355371895) circle (1.5pt);
\draw [fill=black] (2.619933884297523,1.602809917355371) circle (1.5pt);
\draw [fill=black] (1.9231645379414006,0.16839969947408268) circle (1.5pt);
\draw [fill=black] (2.851338842975209,0.4623140495867776) circle (1.5pt);
\draw [fill=black] (6.9569737039819755,1.2953719008264475) circle (1.5pt);
\draw [fill=black] (7.903630353117965,1.4005559729526682) circle (1.5pt);
\draw [fill=black] (6.896868519909849,0.24353117956424034) circle (1.5pt);
\draw [fill=black] (7.903630353117965,0.48395191585274483) circle (1.5pt);
\draw [fill=black] (1.772901577761085,-2.085544703230647) circle (1.5pt);
\draw [fill=black] (2.7946897069872314,-1.694861006761827) circle (1.5pt);
\draw [fill=black] (1.878085649887306,-3.0322013523666334) circle (1.5pt);
\draw [fill=black] (3.0351104432757365,-2.987122464312539) circle (1.5pt);
\draw [fill=black] (6.821737039819688,-1.9202554470323001) circle (1.5pt);
\draw [fill=black] (6.950512396694218,-3.0748760330578446) circle (1.5pt);
\draw [fill=black] (8.272826446280995,-1.7525619834710695) circle (1.5pt);
\draw [fill=black] (8.305884297520665,-2.9261157024793323) circle (1.5pt);
\end{scriptsize}
\end{tikzpicture}
\qquad
\begin{tikzpicture}[scale = 0.75,line cap=round,line join=round,>=triangle 45,x=1.0cm,y=1.0cm]
\clip(0.3971188447194279,-3.86889056604115) rectangle (9.505417170776743,2.2374766396917947);
\draw [rotate around={-166.7572837874792:(2.087858889133701,0.9116157159953564)},line width=1.pt] (2.087858889133701,0.9116157159953564) ellipse (1.4126982135558093cm and 1.2084591543887162cm);
\draw [rotate around={-164.69338833613045:(2.310351311732481,-2.5178899656394145)},line width=1.pt] (2.310351311732481,-2.5178899656394145) ellipse (1.4085878236016962cm and 1.2202846649203718cm);
\draw [rotate around={-164.69338833613136:(7.566549658839922,0.8540108608068558)},line width=1.pt] (7.566549658839922,0.8540108608068558) ellipse (1.4085878236017062cm and 1.2202846649203827cm);
\draw [rotate around={-164.69338833613125:(7.450847179501085,-2.3526007094410653)},line width=1.pt] (7.450847179501085,-2.3526007094410653) ellipse (1.408587823601709cm and 1.2202846649203716cm);
\draw [rotate around={-2.0876178374583:(4.847851689871737,-0.7165059185471561)},line width=1.pt,color=qqqqff] (4.847851689871737,-0.7165059185471561) ellipse (3.0362501528607324cm and 1.6743801568829817cm);
\begin{scriptsize}
\draw [fill=black] (1.495966942148762,1.4209917355371895) circle (1.5pt);
\draw [fill=black] (2.619933884297523,1.602809917355371) circle (1.5pt);
\draw [fill=black] (1.9231645379414006,0.16839969947408268) circle (1.5pt);
\draw [fill=black] (2.851338842975209,0.4223140495867776) circle (1.5pt);
\draw [fill=black] (6.9569737039819755,1.2953719008264475) circle (1.5pt);
\draw [fill=black] (7.903630353117965,1.4005559729526682) circle (1.5pt);
\draw [fill=black] (6.896868519909849,0.24353117956424034) circle (1.5pt);
\draw [fill=black] (7.903630353117965,0.48395191585274483) circle (1.5pt);
\draw [fill=black] (1.772901577761085,-2.085544703230647) circle (1.5pt);
\draw [fill=black] (2.7946897069872314,-1.694861006761827) circle (1.5pt);
\draw [fill=black] (1.878085649887306,-3.0322013523666334) circle (1.5pt);
\draw [fill=black] (3.0351104432757365,-2.987122464312539) circle (1.5pt);
\draw [fill=black] (6.721737039819688,-1.8202554470323001) circle (1.5pt);
\draw [fill=black] (6.950512396694218,-3.0748760330578446) circle (1.5pt);
\draw [fill=black] (8.272826446280995,-1.7525619834710695) circle (1.5pt);
\draw [fill=black] (8.305884297520665,-2.9261157024793323) circle (1.5pt);
\end{scriptsize}
\end{tikzpicture}

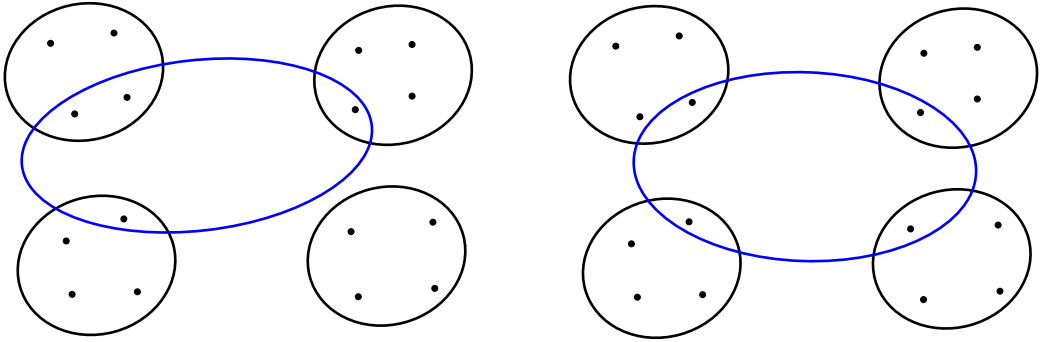
\captionof{figure}{An edge of type $(2,1,1)$ and an edge of type $(1,1,1,1)$}
\end{center}
Note that the set of possible types of edges in our $(r+1)$-uniform hypergraph $\HH$ is $\PP_{r+1}$, the set of partitions of $r+1$.  We now define certain families of one-type hypergraphs.  Given a type $\x\in \PP_{r+1}$ we may define $\HH_{\x}$ to be the hypergraph consisting of all edges of type $\x$.

We shall also consider sparser subhypergraphs of $\HH_\x$.  We remark that each vertex $v\in V$ may be viewed as a pair $(i_v,j_v)$ where $i_v\in [\ell]$ and $j_v\in [s]$.  Given $\alpha\in (0,1)$ we say that a sequence $(i_1,j_1),\dots ,(i_{r+1},j_{r+1})$ of elements of $V$ is $\alpha$-good if 
\[
j_1\, +\dots +\, j_{r+1}\, \in \{1,\dots ,\lfloor\alpha s\rfloor\}\qquad (\text{mod } s)\, .
\]
We may define $\HH^{\alpha}_{\x}$ to be the hypergraph obtained from $\HH_\x$ by keeping those edges $e=\{(i_1,j_1),\dots ,(i_{r+1},j_{r+1})\}$ which are $\alpha$-good.  It will also be useful to define $\bar{\HH}^{\alpha}_{\x}$ to be the equivalent hypergraph, but in which multi-sets are also permitted.  That is $\bar{\HH}^{\alpha}_{\x}$ contains those multi-sets of $r+1$ vertices which have type $\x$ and are $\alpha$-good.

The idea of this sparsification is that roughly $\alpha$ proportion of the edges of $\HH_{\x}$ remain.  In fact the same is true for degrees and higher degrees, up to the $r$-degree which will satisfy $\Delta_r\, =\, O(\alpha s)$, where the constant in the $O(\cdot)$ may depend on $r$ and $\ell$.

\subsection{Multi-type hypergraphs and degrees:} We construct our hypergraph $\HH$ as a union of one-type hypergraphs $\HH^{\alpha}_{\x}$ defined above.  In fact we do not need to consider all types.  We consider only the type $(r+1)$ and types of the form $(\x,1,1)$ where $\x\in \PP_{r-1}$.  For each vector $\x\in \PP_{r-1}$ we will define a vector $\x^+\in \PP_{r+1}$ that ``extends'' $\x$.  For $\x=(r-1)$ we define $\x^+=(r+1)$, and all other $\x\in \PP_{r-1}$ we define $\x^+=(\x,1,1)$.   Given $\valpha=(\alpha_\x:\x\in \PP_{r-1})$, which assigns a value $\alpha_{\x}\in (0,1)$ to each partition $\x$ of $r-1$, we define the hypergraph
\[
\HH^{\valpha}\, :=\, \bigcup_{\x\in \PP_{r-1}}\HH^{\alpha_x}_{\x^+}\, .
\]
To re-iterate, the hypergraph $\HH^{\valpha}$ is a union of hypergraphs $\HH^{\alpha}_{\y}$ where the type $\y$ is either $(r+1)$ or of the form $(\x,1,1)$ for some $\x\in \PP_{r-1}$.  We may think of the $\alpha_{\x}$ as ``weights'' which are associated with the types $\x^{+}$.  We may also define $\bar{\HH}^{\valpha}$ to be the equivalent in which multi-sets are permitted.

We now study degrees of vertices and sets in $\HH^{\valpha}$.  These degrees depend on the weights $\alpha_{\x}$ in a predictable manner, and we shall choose the weights to ensure $\HH^{\valpha}$ is $(r-1,C/\gamma s)$-near-regular.  It will be extremely helpful to us that the degree of an $(r-1)$ set $A$ of type $\x\in \PP_{r-1}$ depends strongly on $\alpha_{\x}$ and weakly on all of the other weights $\alpha_{\y}$.  Let us first study the degree of the type $\x$ abstractly, and then relate this to the degree of $A$ itself.  

Given $\x\in\PP_{r-1}$ let $|x|$ denote the number of entries in the sequence $\x$.  A prototypical set of type $\x$ is the set $A^0_{\x}$ which contains the elements $(1,1),\dots ,(1,x_1)$ of $V_1$ the elements $(2,1),\dots ,(2,x_2)$ of $V_2$ and so on up to $(a,1),\dots (a,x_a)$, where $a:=|x|$.  The abstract degree of $\x$ will be a slight modification of the degree of $A^0_{\x}$ in $\HH^{\valpha}$ in that we will work with the multi-set version $\bar{\HH}^{\valpha}$.  We define the abstract degree $d^{\valpha}_{\x}$ of type $\x$ to be the number of pairs $u,u'\in V$ such that $A^0_{\x}\cup \{u,u'\}\in \bar{\HH}^{\valpha}$.

With the following two lemmas we prove that the degrees of sets $A$ of type $\x$ are very well approximated by $d^{\valpha}_{\x}$ and that $d^{\valpha}_{\x}$ itself is very well controlled in terms of the weight $\alpha_{\x}$.

\begin{lemma}\label{lem:setdegs} Let $\x\in \PP_{r-1}\setminus \{(r-1)\}$ and let $A\subseteq V$ be a subset of $r-1$ vertices of type $\x$.  Let $\valpha=(\alpha_{\x}:\x\in \PP_{r-1})$ and let $\alpha'$ be the largest value of $\alpha_{\y}$ for $\y\in \PP_{r-1}\setminus \{(r-1)\}$.  Then the degree of $A$ in $\HH^{\valpha}$ differs from $d^{\valpha}_{\x}$ by at most $O(\alpha' n)$.
\end{lemma}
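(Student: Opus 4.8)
The plan is to introduce the \emph{multiset degree} $\bar d^{\valpha}(A)$ of $A$, namely the number of pairs $u,u'\in V$ — repetitions allowed, and $u$ or $u'$ allowed to lie in $A$ — for which the multiset $A\cup\{u,u'\}$ belongs to $\bar{\HH}^{\valpha}$. Note that $\bar d^{\valpha}(A^0_{\x})$ is, by definition, exactly $d^{\valpha}_{\x}$. I will prove the lemma in two steps: first that $\bar d^{\valpha}(A)=\bar d^{\valpha}(A^0_{\x})$, and then that $\bar d^{\valpha}(A)-\deg_{\HH^{\valpha}}(A)=O(\alpha'n)$; together these give the claim.

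For the first step I would parametrise a pair $\{u,u'\}$ by the parts $(p_u,p_{u'})\in[\ell]^2$ containing $u$ and $u'$ and by their second coordinates $(j_u,j_{u'})\in(\ZZ/s\ZZ)^2$. Since $\HH^{\valpha}$ is an $\ell$-part hypergraph I may first apply a permutation of the parts to assume that $A$ occupies the same parts, with the same part-sizes, as $A^0_{\x}$. The two key points are: (i) the multiset-type of $A\cup\{u,u'\}$ — hence whether it has the admissible form $\y^{+}$, and for which $\y$ — is determined by $\x$ and $(p_u,p_{u'})$ alone, not by $(j_u,j_{u'})$; and (ii) because $\x\neq(r-1)$ the set $A$ meets at least two parts, so $A\cup\{u,u'\}$ cannot have type $(r+1)$, and the admissible type is always $(\y,1,1)$ with $\y\in\PP_{r-1}\setminus\{(r-1)\}$, governed by $\alpha_{\y}\le\alpha'$. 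Given $(p_u,p_{u'})$, the $\alpha_{\y}$-goodness condition reads $\sigma(A)+j_u+j_{u'}\in\{1,\dots,\lfloor\alpha_{\y}s\rfloor\}\pmod s$, where $\sigma(A)$ is the sum of the second coordinates of the vertices of $A$; since every fibre of the map $(x,y)\mapsto x+y$ on $(\ZZ/s\ZZ)^2$ has size $s$, the number of $(j_u,j_{u'})$ satisfying it is $s\lfloor\alpha_{\y}s\rfloor$, independent of $\sigma(A)$. Summing over $(p_u,p_{u'})$ then yields a value depending on $\x$ only, so $\bar d^{\valpha}(A)=\bar d^{\valpha}(A^0_{\x})$.

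For the second step, observe that $\bar d^{\valpha}(A)-\deg_{\HH^{\valpha}}(A)$ counts exactly the \emph{degenerate} pairs $\{u,u'\}$ — those with $u=u'$, or with $u\in A$, or with $u'\in A$ — for which $A\cup\{u,u'\}\in\bar{\HH}^{\valpha}$, since any non-degenerate pair produces an honest $(r+1)$-set which lies in $\bar{\HH}^{\valpha}$ precisely when it lies in $\HH^{\valpha}$. All relevant admissible types are again $(\y,1,1)$ with $\alpha_{\y}\le\alpha'$ (never $(r+1)$, as $A$ meets two parts), so the goodness condition is linear in the free second coordinate — or, when $u=u'$, linear in twice it — and hence confines that coordinate to a set of size $O(\alpha's)$ once the part is fixed. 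When $u=u'$ this leaves $O(\ell\alpha's)$ choices for the common vertex; when $u\in A$ (or $u'\in A$) there are at most $r-1$ choices for that vertex and then $O(\ell\alpha's)$ for the other. Summing over the $O_r(1)$ admissible types and using $\ell=\ell(r)$, the total is $O_r(\alpha's)=O(\alpha'n)$.

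The main obstacle I anticipate is the bookkeeping in the first step: making precise that passing to the multiset model removes the dependence of the count on the particular representative $A$ of the type, that the $\ell$-part symmetry genuinely preserves the goodness statistic, and — crucially — that the hypothesis $\x\neq(r-1)$ is exactly what rules out the type $(r+1)$, the only place where the excluded (and potentially large) weight $\alpha_{(r-1)}$ could enter. The elementary convolution identity for sums in $\ZZ/s\ZZ$ is what makes the degree essentially insensitive to the choice within a type, and is the crux of the argument.
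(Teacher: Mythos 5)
Your proof is correct and follows the same two-step plan as the paper's: compare the degree of $A$ in $\HH^{\valpha}$ to its degree in the multiset hypergraph $\bar{\HH}^{\valpha}$, show the latter equals $d^{\valpha}_{\x}$, and bound the difference by counting degenerate multisets. The paper dispatches the first step with the terse remark ``by the symmetry of the definition of $\bar{\HH}^{\valpha}$''; your convolution observation that every fibre of $(j_u,j_{u'})\mapsto j_u+j_{u'}$ on $(\ZZ/s\ZZ)^2$ has size $s$, so the count is insensitive to $\sigma(A)$, is exactly the justification this remark needs (there is no literal automorphism of $\bar{\HH}^{\valpha}$ carrying a generic $A$ of type $\x$ to $A^0_{\x}$, since the goodness statistic depends on the actual $j$-coordinates), while your second step matches the paper's bound on the degenerate contributions.
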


\begin{proof} By the symmetry of the definition of $\bar{\HH}^{\valpha}$, the degree of $A$ in $\bar{\HH}^{\valpha}$ is precisely $d^{\alpha}_{\x}$.  The degree of $A$ in $\HH^{\valpha}$ may be slightly smaller as we no longer count multi-sets.  It therefore suffices to bound the number of $\alpha'$-good multi-sets of $r+1$ vertices which contain $A$.  There are $r$ choices of which element to repeat (one of the $r-1$ elements of $A$ or the new $r$th element) and in each case only $\alpha_{\x} n\le \alpha'n$ choices of the new $r$th element such that the resulting multi-set is $\alpha_{\x}$-good.
\end{proof}

\begin{lemma}\label{lem:abstractdegs} Let $\x\in \PP_{r-1}$, and let $|x|=a\ge 2$.  Let $\valpha=(\alpha_{\x}:\x\in \PP_{r-1})$ and let $\alpha(a)$ be the largest value of $\alpha_{\y}$ for $\y\in \PP_{r-1}$ with $2\le |\y|\le a-1$.  Then the abstract degree $d^{\valpha}_{\x}$ of $\x$ satisfies
\[
\binom{\ell-a}{2} \alpha_{\x} s^2\, -\, O(s)\le\, d^{\valpha}_{\x} \, \le\, \binom{\ell-a}{2} \alpha_{\x} s^2\, +\, r^4\ell \alpha(a) s^2\, .
\]
\end{lemma}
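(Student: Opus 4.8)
The plan is to classify the pairs $u,u'$ counted by $d^{\valpha}_{\x}$ according to the \emph{shape} of $\{u,u'\}$ relative to the set $A^0_{\x}$ and the partition $V=V_1\cup\dots\cup V_\ell$: namely, which parts $u$ and $u'$ lie in, whether these parts are among the $a$ parts already met by $A^0_{\x}$, whether $u=u'$, and whether $u$ or $u'$ already lies in $A^0_{\x}$. There are only $O_r(1)$ such shapes, and for each one the multiset $e:=A^0_{\x}\uplus\{u,u'\}$ has a fixed type (the vector $(|e\cap V_1|,\dots,|e\cap V_\ell|)$ is determined by the shape up to reordering). Hence $e\in\bar{\HH}^{\valpha}$ if and only if that type is one of the admissible types $\{(r+1)\}\cup\{\y^+:\y\in\PP_{r-1}\}$ and $e$ is $\alpha_{\y}$-good for the corresponding $\y$; and the $\alpha_{\y}$-goodness condition constrains only the residue modulo $s$ of the sum of the second coordinates of the vertices of $e$. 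The proof then splits into evaluating the one dominant shape exactly and bounding every other shape by $O_r(\ell\,\alpha(a)s^2)$.

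\emph{The dominant shape.} Suppose $u$ and $u'$ lie in two distinct parts $V_i,V_j$ with $i,j>a$. Then $e$ has type $(x_1,\dots,x_a,1,1)=\x^+$ (using $a\ge2$, so $\x\ne(r-1)$ and $\x^+=(\x,1,1)$), so $e\in\bar{\HH}^{\alpha_{\x}}_{\x^+}$ exactly when $e$ is $\alpha_{\x}$-good. Writing $c_{\x}$ for the fixed sum of second coordinates of $A^0_{\x}$, goodness reads $c_{\x}+j_u+j_{u'}\in\{1,\dots,\lfloor\alpha_{\x}s\rfloor\}\pmod s$, and for each of the $s$ values of $j_u$ exactly $\lfloor\alpha_{\x}s\rfloor$ values of $j_{u'}$ work, since $j_{u'}\mapsto c_{\x}+j_u+j_{u'}$ is a bijection modulo $s$. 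Thus this shape contributes exactly $\binom{\ell-a}{2}\,s\lfloor\alpha_{\x}s\rfloor$, which lies between $\binom{\ell-a}{2}(\alpha_{\x}s^2-s)$ and $\binom{\ell-a}{2}\alpha_{\x}s^2$. Since $\ell=4(r+1)!=O_r(1)$, this already yields $d^{\valpha}_{\x}\ge\binom{\ell-a}{2}\alpha_{\x}s^2-O_r(s)$ and supplies the main term of the upper bound.

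\emph{The remaining shapes.} For any other shape $\{u,u'\}$ meets at most one new part, so the type of $e$ has support size in $\{a,a+1\}$; as $a\ge2$ this excludes the type $(r+1)$, so $e$ is an edge only if its type is $\y^+$ with $|\y|+2\in\{a,a+1\}$, i.e.\ $|\y|\in\{a-2,a-1\}$. Inspecting the ways to lose parts — $u,u'$ in the same new part or $u=u'$ there; one of them landing in an old part or coinciding with a vertex of $A^0_{\x}$; both doing so — one checks that for $e$'s type to have the form $\y^+=(\y,1,1)$ two of its entries must equal $1$, and then the requirement $|\y|\ge2$ (so that $\y^+\ne(r+1)$) forces $a\ge3$ and $2\le|\y|\le a-1$, hence $\alpha_{\y}\le\alpha(a)$. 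For the count: in each such shape the $\alpha_{\y}$-goodness condition pins the relevant residue to a window of length $\lfloor\alpha_{\y}s\rfloor$, or at most $3\lfloor\alpha_{\y}s\rfloor\le3\alpha(a)s$ when a coordinate is doubled modulo $s$; crucially, if $\lfloor\alpha_{\y}s\rfloor=0$ the window is empty and the shape contributes nothing, so no stray additive term arises. The remaining freedom is at most one vertex and one part, i.e.\ $\le\ell s$ options, so each non-dominant shape contributes $O_r(\ell\,\alpha(a)s^2)$; summing over the $O_r(1)$ shapes and bounding the absolute constant by $r^4$ gives a total of at most $r^4\ell\,\alpha(a)s^2$. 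Adding this to the dominant shape gives $d^{\valpha}_{\x}\le\binom{\ell-a}{2}\alpha_{\x}s^2+r^4\ell\,\alpha(a)s^2$.

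The main obstacle is the bookkeeping in the last paragraph: one must enumerate the degenerate and multiset shapes, verify in each that the only admissible edge type has the form $\y^+$ with $2\le|\y|\le a-1$ (so it carries weight $\le\alpha(a)$), and track constants so that the crude $O_r(\ell\alpha(a)s^2)$ bound tightens to $r^4\ell\alpha(a)s^2$; the subtle point is that an empty goodness window eliminates a shape entirely rather than leaving an $O_r(1)$ remainder, which is exactly what keeps the error term of pure order $\alpha(a)s^2$.
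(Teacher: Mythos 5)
Your proof is correct and takes essentially the same approach as the paper. The paper classifies the non-dominant contributions by the edge type $\y^+$ (with $\y \in \PP_{r-1}$, $|\y| \ge a-2$) and splits into three cases according to how $\y$ relates to $\x$; you instead classify by the ``shape'' of the added pair $\{u,u'\}$ relative to $A^0_{\x}$ and its parts. These are dual descriptions of the same decomposition, and the actual counting---compute the dominant shape (two distinct new parts, giving exactly $\binom{\ell-a}{2}\,s\lfloor \alpha_{\x} s\rfloor$) and bound each remaining shape by $O_r(\ell\,\alpha(a)\,s^2)$ via one free part, one free vertex, and a goodness window of length $O(\alpha(a)s)$---is the same as the paper's. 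Your observation that an empty goodness window gives zero contribution (rather than an $O(1)$ remainder), so the error is of pure order $\alpha(a)s^2$, is a valid way to get the clean upper bound; the paper handles this implicitly by working with $\lfloor\alpha_{\y}s\rfloor$ directly. The absorption of the shape count and constants into $r^4\ell$ is stated at about the same level of precision in both proofs.
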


\begin{proof} The abstract degree $d^{\valpha}_{\x}$ of $\x$ is the degree of $A^0_{\x}$ in $\bar{\HH}^{\valpha}$.  Since $\bar{\HH}^{\valpha}$ contains $\HH^{\alpha_\x}_{\x}$ we may prove the lower bound simply by observing that $A^0_{\x}$ is contained in $\binom{\ell-a}{2} \alpha_{\x} s^2 - O(s)$ edges of $\HH^{\alpha_\x}_{\x}$.  To see this note that the edges of $\HH^{\alpha_\x}_{\x}$ are of type $\x^{+}=(\x,1,1)$ so we may extend from $A^0_{\x}$ to an edge of type $\x^+$ by adding one vertex to each of two new parts of $V_1\cup\dots \cup V_{\ell}$, there are $\binom{\ell - a}{2}$ choices of the two other parts, $s$ choices of an element in one, and $\lfloor\alpha_{\x}s\rfloor =\alpha_{\x}s\pm 1$ choices of an element in the second such that the resulting set is $\alpha_{\x}$-good.

To prove the upper bound we must consider all the other $\bar{\HH}^{\alpha_\y}_{\y}$ subhypergraphs of $\bar{\HH}^{\valpha}$.  In fact, if $\y$ is not closely related to $\x$ then $A^0_{\x}$ will not be included in any edges of type $\y^{+}$ (for example an edge of type $(3,1)$ is not contained in any of type $(2,2,1,1)$).  In fact it is necessary that $x_i\le y_i$ for all $i=1,\dots ,a$, and $x_i\le y^{+}_i$ for all $i=1,\dots ,a$.  This last condition implies that $|\y|\ge a-2$.  The cases that $\y\in \PP_{r-1}$ and $x_i\le y^{+}_i$ for all $i=1,\dots ,a$ correspond to just three possible cases:
\begin{enumerate}
\item[(i)] $x_a=1$ and there is a unique $j<a$ such that $y_j=x_j+1$, and $y_i=x_i$ for $i\in [a-1]\setminus\{j\}$.
\item[(ii)] $x_{a-1}=x_a=1$ and there is a unique $j<a$ such that $y_j=x_j+2$, and $y_i=x_i$ for $i\in [a-2]\setminus\{j\}$.
\item[(iii)] $x_{a-1}=x_a=1$ and there is a pair $j<j'<a$ such that $y_j=x_j+1$, $y_{j'}=x_{j'}+1$ and $y_i=x_i$ for $i\in [a-2]\setminus\{j,j'\}$.
\end{enumerate}
It is easily verified that the number of ways to extend $A^0_{\x}$ to an $\alpha_{\y}$-good edge of type $\y$ is at most $r\ell\alpha' s^2$ in case (i), at most $r\alpha' s^2$ in case (ii) and at most $r^2\alpha' s^2$ in case (iii).  Since each case corresponds to a bounded number of sequences $\y$ (certainly at most $r^2$ in each case) we deduce the upper bound.
\end{proof}

In fact, there are some particular types $\x$ which we shall analyse particularly carefully so it is useful to count the degree even more carefully in these cases.

\begin{lemma}\label{lem:12} If $\x=(r-1)$ then every $(r-1)$-tuple of type $\x$ has degree
\[
\frac{1}{2}\lfloor \alpha_{\x}s^2 \rfloor\, +\, O(s)\, .
\]
If $|\x|=2$ then every $(r-1)$-tuple of type $\x$ has degree
\[
\binom{\ell-2}{2}\alpha_{\x}s^2\, +\, O(s)\, .
\]
\end{lemma}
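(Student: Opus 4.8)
The plan is to compute $d_{\HH^{\valpha}}(A)$ for an $(r-1)$-subset $A$ of type $\x$ directly, by identifying exactly which of the one-type hypergraphs $\HH^{\alpha_{\y}}_{\y^+}$ contributes edges through $A$. The structural fact driving everything is a restriction on the number of non-empty parts of a type $\y^+$: the unique partition of $r-1$ with a single part is $(r-1)$ itself, so $|\y|\ge 2$ whenever $\y\ne(r-1)$, giving $|\y^+|=|\y|+2\ge 4$; together with $(r-1)^+=(r+1)$, which has one part, this shows that $|\y^+|\in\{1\}\cup\{4,5,\dots\}$ for every $\y\in\PP_{r-1}$. In particular no type of the form $\y^+$ has exactly two or three non-empty parts.

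First I would handle $\x=(r-1)$, so $A$ consists of $r-1$ vertices of a single part $V_q$. Any edge $e$ of $\HH^{\valpha}$ containing $A$ has at least $r-1$ of its $r+1$ vertices in $V_q$, hence its type has a part of size at least $r-1$; the only candidates are $(r+1)$, $(r,1)$, $(r-1,2)$ and $(r-1,1,1)$, and by the observation above only $(r+1)=(r-1)^+$ is of the form $\y^+$, so the weight in play is $\alpha_{\x}$. Thus $d_{\HH^{\valpha}}(A)$ is the number of $\alpha_{\x}$-good $(r+1)$-subsets of $V_q$ containing $A$: writing $A=\{(q,j_1),\dots,(q,j_{r-1})\}$ and $\sigma=j_1+\dots+j_{r-1}$, this equals the number of unordered pairs $\{a,b\}\subseteq[s]\setminus\{j_1,\dots,j_{r-1}\}$ with $a\ne b$ and $a+b\bmod s$ lying in the fixed set $S$ of $\lfloor\alpha_{\x}s\rfloor$ residues $\{1-\sigma,\dots,\lfloor\alpha_{\x}s\rfloor-\sigma\}\bmod s$. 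For each $t\in S$ there are exactly $s$ ordered pairs $(a,b)\in[s]^2$ with $a+b\equiv t$, hence $\lfloor\alpha_{\x}s\rfloor s$ ordered pairs in total; deleting the at most $O(s)$ ordered pairs with $a=b$ or with $a$ or $b$ in $\{j_1,\dots,j_{r-1}\}$ (each such constraint removes at most $O(|S|)=O(s)$ pairs, with implicit constant depending only on $r$) and dividing by $2$ gives $\tfrac12\lfloor\alpha_{\x}s\rfloor s-O(s)=\tfrac12\lfloor\alpha_{\x}s^2\rfloor+O(s)$. The count is independent of $\sigma$ up to the error term, so it is uniform over all tuples of type $\x$.

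Next I would handle $|\x|=2$, say $\x=(x_1,x_2)$ with $x_1+x_2=r-1$ (so $r\ge 3$; otherwise there is nothing to prove), and $A$ has $x_1$ vertices in a part $V_p$ and $x_2$ vertices in a part $V_q$ with $p\ne q$. If $e$ is an edge of $\HH^{\valpha}$ containing $A$ then $e$ spans at least $V_p$ and $V_q$, so its type $\z$ has $|\z|\ge 2$; since $\z$ is of the form $\y^+$, the observation forces $|\z|\ge 4$, and since $|e\setminus A|=2$ the two extra vertices of $e$ must occupy exactly two further parts, one vertex in each. Hence $\z=(\x,1,1)=\x^+$ (so the weight is $\alpha_{\x}$), and such an $e$ is automatically a genuine set, with no multiset coincidences possible. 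Counting: there are $\binom{\ell-2}{2}$ choices for the unordered pair of new parts, and for each such choice the number of ways to pick one vertex from each so that the sum of all second coordinates lands in the good window of size $\lfloor\alpha_{\x}s\rfloor$ is exactly $\lfloor\alpha_{\x}s\rfloor s$ — for each of the $\lfloor\alpha_{\x}s\rfloor$ admissible residues of the sum of the two new second coordinates there are precisely $s$ ordered pairs in $[s]^2$, and here there are no vertices to exclude. Therefore $d_{\HH^{\valpha}}(A)=\binom{\ell-2}{2}\lfloor\alpha_{\x}s\rfloor s=\binom{\ell-2}{2}\alpha_{\x}s^2+O(s)$, again uniformly over all tuples of type $\x$.

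The only step requiring genuine care is the case analysis inside each of the two cases — determining which types $\y^+$ can contain $A$ and which weight $\alpha_{\y}$ is then relevant — together with the elementary but slightly delicate bookkeeping of the residue counts (ordered versus unordered pairs, and the $O(s)$ worth of coincidences when $\x=(r-1)$). I expect no real obstacle: once the fact that $\y^+$ never has exactly two or three non-empty parts is in hand, in each case exactly one one-type hypergraph contributes, and the count is a short exercise with sums in $\ZZ/s\ZZ$.
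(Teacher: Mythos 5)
Your proof is correct and follows essentially the same strategy as the paper's: identify the unique one-type subhypergraph $\HH^{\alpha_{\x}}_{\x^{+}}$ that can contribute edges through a set of type $\x$, then count its edges directly via the residue condition. The only real difference is that where the paper asserts "it may be easily checked" that no $\y^{+}$ with $\y \neq \x$ contributes, you supply the cleaner structural reason — namely that $|\y^{+}| \in \{1\} \cup \{4,5,\dots\}$ for every $\y \in \PP_{r-1}$ — which is a nice tightening of the exposition but not a different route.
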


\begin{proof} First, for $\x=(r-1)$, an $(r-1)$-tuple is of type $\x$ if it consists of $r-1$ vertices in the same part.  Without loss of generality consider $(1,j_1),\dots ,(1,j_{r-1})$.  The only edges of $\HH^{\valpha}$ which contain such a tuple are the edges of type $\x^+=(r+1)$, which consist of $r+1$ elements in the same part.  There are clearly $\binom{s-r+1}{2}$ ways to extend to an $(r+1)$ set of this type.  The number of choices which produce an $\alpha_{\x}$-good edge is between
\[
\frac{1}{2}s\lfloor \alpha_{\x}s \rfloor\, -\, r\alpha_{\x}s \quad \text{and}\quad \frac{1}{2}s \alpha_{\x}s\, .
\]
Since both these expressions are of the form
\[
\frac{1}{2}s\lfloor \alpha_{\x}s \rfloor\, +\, O(s)
\]
this completes the proof of the first part.

Now consider $\x$ with $|\x|=2$.  This means that $\x=(r-1-i,i)$ for some $1\le i\le \lfloor (r-1)/2\rfloor$.  It may be easily checked in these cases that an $(r-1)$-tuple of type $\x$ is not contained in any $\y^{+}$ for $\y\neq \x$, and so the degree of this tuple in $\HH^{\valpha}$ is the degree of this tuple in $\HH^{\alpha_{\x}}_{\x}$.  To complete the proof it suffices to observe that this degree is $\binom{\ell-2}{2}\alpha_{\x}s^2+ O(s)$.  This follows easily from the fact that there are $\binom{\ell - 2}{2}$ choices of the pair of parts of the last two vertices, $s$ choices for a vertex in the first part, and finally $\lfloor \alpha_{\x}s \rfloor=\alpha_{\x}s\pm 1$ choices of the second vertex, in such a way that an $\alpha_{\x}$-good edge is created.
\end{proof}

\subsection{Choice of the weights -- completing the proof of Proposition~\ref{prop:appendix}}

As we have discussed above we shall define the hypergraph $\HH$ as $\HH^{\valpha}$ for a particular choice of the vector of weights $\valpha$.  Our aim is to select the weights such that all $(r-1)$-tuples have degree approximately $\gamma s^2$.   To be more precise, all such degrees will be $\gamma s^2 \pm C s$.  This will be sufficient to ensure that the hypergraph $\HH$ satisfies conditions (i),(ii) and (iii) of the $(r,C/s,\gamma)$-nice definition.  Once this has been achieved we shall verify that condition (iv) also holds for $\HH$.

\begin{lemma}\label{lem:weight} If the constant $C$ is chosen sufficiently large then there is a choice of the weights $\valpha=(\alpha_{\x}:\x\in \PP_{r-1})$ such that:
\begin{enumerate}
\item[(a)] The degree of each $(r-1)$-set is $\gamma s^2 \pm C s$
\item[(b)] $\alpha_{(r-1)}=2\gamma$
\item[(c)] The weights satisfy $0\le \alpha_{\x}\le 4\gamma \ell^{-2}$ for all $\x\in \PP_{r-1}\setminus \{(r-1)\}$.
\end{enumerate}
\end{lemma}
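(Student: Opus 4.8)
The plan is to fix the weight vector $\valpha=(\alpha_{\x}:\x\in\PP_{r-1})$ by forward substitution, processing the partitions $\x$ of $r-1$ in increasing order of $|\x|$, the number of parts. The process starts with $|\x|=1$, i.e.\ $\x=(r-1)$: we put $\alpha_{(r-1)}:=2\gamma$, which is exactly (b). By Lemma~\ref{lem:12}, every $(r-1)$-tuple of type $(r-1)$ then has degree $\tfrac12\lfloor 2\gamma s^2\rfloor+O_r(s)=\gamma s^2+O_r(s)$, establishing (a) for these tuples.

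For the inductive step, fix $a\ge 2$ and assume $\alpha_{\y}$ has been chosen for every $\y\in\PP_{r-1}$ with $|\y|<a$, with $0\le\alpha_{\y}\le 4\gamma\ell^{-2}$ whenever $|\y|\ge 2$. Let $\x\in\PP_{r-1}$ with $|\x|=a$. The crucial point is that the degree of an $(r-1)$-set of type $\x$ in $\HH^{\valpha}$ is ``triangular'' in the weights. An edge of $\HH^{\valpha}$ through the prototype $A^0_{\x}$ has type $\z^+$ for some $\z\in\PP_{r-1}$; a comparison of partitions --- the $a$ largest entries of $\z^+$ have sum equal to $r+1$ minus the sum of at least two further entries, each at least $1$, so they can dominate the parts of $\x$ (which sum to $r-1$) only if $|\z|=a$, and then $\z=\x$ --- shows that $\z=\x$ is the unique such type with $|\z|\ge a$. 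Combining this with Lemmas~\ref{lem:setdegs} and~\ref{lem:abstractdegs}, the degree of any $(r-1)$-set of type $\x$ equals
\[
\binom{\ell-a}{2}\alpha_{\x}s^2\;+\;g_{\x}s^2\;+\;O_r(s),
\]
where $g_{\x}\ge 0$ is independent of $s$ and of $\alpha_{\x}$, is determined by the already chosen weights $\alpha_{\y}$ with $2\le|\y|<a$ (through cases (i)--(iii) in the proof of Lemma~\ref{lem:abstractdegs}), and satisfies $g_{\x}\le r^4\ell\,\max\{\alpha_{\y}:2\le|\y|<a\}$. We now define
\[
\alpha_{\x}\;:=\;\frac{\gamma-g_{\x}}{\binom{\ell-a}{2}},
\]
so that this degree becomes $\gamma s^2+O_r(s)$.

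It remains to verify $0\le\alpha_{\x}\le 4\gamma\ell^{-2}$; once this is done, (c) holds, and since there are only $|\PP_{r-1}|=O_r(1)$ types and all implied constants depend only on $r$ (as $\ell=\ell(r)$), taking $C=C(r)$ larger than every $O_r(s)$ error term yields (a). Since $g_{\x}\ge 0$ and $a\le r-1$ is negligible next to $\ell=4(r+1)!$, whence $\binom{\ell-a}{2}\ge\ell^2/4$, we get $\alpha_{\x}\le\gamma/\binom{\ell-a}{2}\le 4\gamma\ell^{-2}$. For $\alpha_{\x}\ge 0$ we need $g_{\x}\le\gamma$; the inductive hypothesis gives $\max\{\alpha_{\y}:2\le|\y|<a\}\le 4\gamma\ell^{-2}$, so $g_{\x}\le 4r^4\gamma/\ell\le\gamma$ for $r\ge 5$, while for $r\in\{2,3,4\}$ the index set $\{2\le|\y|<a\}$ is so small (indeed often empty, forcing $g_{\x}=0$) that $g_{\x}\le\gamma$ is immediate. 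This closes the induction.

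I expect the main obstacle to be establishing the triangular structure behind the first display: that the degree of a type-$\x$ set is an explicit linear function of $\alpha_{\x}$ plus a term $g_{\x}s^2$ depending only on weights of types with strictly fewer parts and --- crucially --- not on $s$. This is what makes the forward substitution both well defined and forced. It relies on the partition comparison sketched above together with the explicit case analyses already carried out for Lemmas~\ref{lem:setdegs}, \ref{lem:abstractdegs} and~\ref{lem:12}; once it is in hand, the remaining work --- accumulating the $O_r(s)$ errors over the finitely many types and checking the two range inequalities for $\alpha_{\x}$ --- is routine.
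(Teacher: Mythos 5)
Your proposal is correct and takes essentially the same approach as the paper: induction on $|\x|$, using Lemma~\ref{lem:12} for the base cases and Lemmas~\ref{lem:setdegs} and~\ref{lem:abstractdegs} to control the contribution from types with fewer parts, then choosing $\alpha_{\x}$ by forward substitution so that the remaining degree budget is met. The only cosmetic difference is that you define the "already-accounted-for" contribution $g_{\x}$ where the paper writes $\gamma_{\x}=\gamma-g_{\x}$; you are also slightly more careful about the crude bound $4r^4\gamma/\ell\le\gamma$, which literally fails for $r\le 4$, though as you note those cases are trivial or near-trivial.
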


\begin{proof} We shall prove this statement by induction.  For each $1\le j\le r-1$ consider the statement that there exists a weighting such that (a) is satisfied for sets whose type $\x$ has $|\x|\le j$, (b) and (c) are satisfied, and $\alpha_{\x}=0$ for all $|\x|>j$. 

The base case ($j=1$) of this statement is proved by taking $\alpha_{(r-1)}=2\gamma$.  It follows from Lemma~\ref{lem:12} that the degree of a set of type $(r-1)$ is
\[
 \frac{1}{2}\alpha_{(r-1)}s^2\, +\, O(s)\, =\, \gamma s^2\, +\, O(s)\, ,
 \]
 as required.
 
 We shall also treat $j=2$ as a base case.  We extend the choice above, and so $\alpha_{(r-1)}=2\gamma$.  Note that a set of type $\x$ with $|\x|=2$ will not be contained in a part $V_i$, and so will not be contained in any of the edges of $\HH^{\alpha_{(r-1)}}_{(r-1)}$.  We define $\alpha_{\x}=\gamma /\binom{\ell-2}{2}$.  And so, again by Lemma~\ref{lem:12}, we have that a set of type $\x$ has degree
 \[
 \binom{\ell-2}{2}\alpha_{\x}s^2\, +\, O(s)\, =\, \gamma s^2 \, +\, O(s)\, .
 \]
  
For the induction step, suppose the weighting has been defined for $\|x\|\le j-1$ we shall show how to extend the weighting for $\alpha_{\x}$ with $|\x|=j$.  Fix $\x$ with $|\x|=j$.  We first focus on the abstract degree $d^{\valpha}_{\x}$ of $\x$.  We use Lemma~\ref{lem:abstractdegs}.  Recall that in Lemma~\ref{lem:abstractdegs} $\alpha(a)$ is defined to be $\max\{\alpha_{\y}:2\le |\y|\le a-1\}$.  By the induction hypothesis we may assume that $\alpha(j)\le 4\gamma\ell^{-2}$.  It follows that the contribution to $d^{\valpha}_{\x}$ from the weights $\alpha_{\y}$ with $|\y|\le j-1$ is at most we have that the contribution of all edges of type $\y^+$ where $|\y|<|\x|$ is at most
\[
r^4\ell \alpha(j) s^2 \, \le\, 4\gamma \ell^{-1} r^4 s^2\, \le\, \gamma s^2\, .
\]
Let $0\le \gamma_{\x}\le \gamma$ be such that $\gamma_{\x}s^2$ is the remaining degree necessary, i.e., $\gamma s^2$ minus the contributions of earlier $\y$.  We may define 
\[
\alpha_{\x} \, =\, \frac{\gamma_{\x}}{\binom{\ell-|\x|}{2}}\, .
\]
We observe that $\alpha_{\x}$ are at most $4\gamma \ell^{-2}$, and that the abstract degree $d^{\valpha}_{\x}$ is $\gamma s^2 +O(s)$.

Once the weights $\alpha_{\x}$ have been chosen for all $|\x|=j$ we have by Lemma~\ref{lem:setdegs} (applied with $\alpha'\le 4\gamma \ell^{-2}$) that all $(r-1)$ tuples have degree
\[
\gamma s^2\, +\, O(s)\, ,
\]
as required.
\end{proof}

By taking $\HH=\HH^{\valpha}$, where the weights $\valpha$ are given by Lemma~\ref{lem:weight}, we have produced a hypergraph which satisfies properties (i), and (ii) from the definition of $(r,C/s,\gamma)$-nice.  In fact $\HH$ also satisfies condition (iii).  To see this we consider two cases.  For an $r$-set $A$ contained in a part $V_i$ the only edges containing $A$ are those of type $(r+1)$, it follows that the degree of $A$ is at most $\alpha_{(r-1)}s=2\gamma s\le \gamma N$.  For any other $r$-set $A$ it is contained in edges of at most $r$ of the hypergraphs $\HH_{\x^+}^{\alpha_{\x}}$ which make up $\HH$.  In each case to extend $A$ to an edge requires a choice of a part (at most $\ell$ possibilities) and a choice of a vertex to complete an $\alpha_{\x}$-good edge (at most $\alpha_{\x}s\le 4\gamma s/\ell^2$ possibilities).  It follows that the degree of $A$ is at most $4r\gamma s/\ell \le \gamma N$.  

All that remains is to prove that
\[
c^{\HH}_r\, \ge \, \gamma N^{r+1}/\ell^{r+1}\, .
\]
This is achieved by this final lemma.

\begin{lemma} Let $\HH=\HH^{\valpha}$ for some sequence of weights $\valpha=(\alpha_{\x}:\x\in \PP_{r-1})$ with $\alpha_{(r-1)}=2\gamma$ and
\[
0\le \alpha_{\x}\le 4\gamma \ell^{-2}
\]
for all $\x\in \PP_{r-1}\setminus \{(r-1)\}$.  Then
\[
c^{\HH}_r\, \ge \, \frac{\gamma s!}{r!}\, =\, \frac{\gamma N^{r+1}}{r!\ell^{r+1}}\, .
\]
\end{lemma}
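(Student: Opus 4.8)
The plan is to expand $c^{\HH}_{r}=[x^{r}]Q^{\HH}(x)$ over the edge-disjoint pieces $\HH^{\alpha_{\x}}_{\x^{+}}$, $\x\in\PP_{r-1}$, that make up $\HH=\HH^{\valpha}$. First I would note that, in $Q^{\HH}(x)=\sum_{e}(1+2x)^{e_{1}}(1-x)^{e_{2}+e_{3}}$, an edge $e$ contributes to the coefficient of $x^{r}$ only when $e_{1}+e_{2}+e_{3}\ge r$, that is, at most one of the $r+1$ vertices of $e$ lies outside $V_{1}\cup V_{2}\cup V_{3}$. Since an edge of type $\x^{+}$ with $\x\ne(r-1)$ meets $|\x|+2$ parts, at least $|\x|+1$ of these must lie in $\{V_{1},V_{2},V_{3}\}$, forcing $|\x|\le 2$. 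Hence only the piece $\HH^{2\gamma}_{(r+1)}$ and the pieces $\HH^{\alpha_{(p,q)}}_{(p,q,1,1)}$ with $p+q=r-1$, $p\ge q\ge 1$, can contribute to $c^{\HH}_{r}$.

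Next, the main term. By the $\ell$-part symmetry, each part $V_{j}$ contains the same number $M$ of $2\gamma$-good edges of type $(r+1)$, where $M$ is the number of $(r+1)$-subsets $\{j_{1},\dots,j_{r+1}\}$ of $[s]$ with $j_{1}+\dots+j_{r+1}\in\{1,\dots,\lfloor 2\gamma s\rfloor\}\pmod s$. Fibring over the last coordinate, the number of \emph{ordered} such tuples is exactly $\lfloor 2\gamma s\rfloor\,s^{r}$; discarding the at most $\binom{r+1}{2}s^{r}$ tuples with a repeated entry and dividing by $(r+1)!$ gives $M\ge(\lfloor 2\gamma s\rfloor-\binom{r+1}{2})s^{r}/(r+1)!\ge 1.9\,\gamma s^{r+1}/(r+1)!$, using $\gamma s\ge 10\ell^{2}$. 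A type-$(r+1)$ edge inside $V_{1}$ contributes $[x^{r}](1+2x)^{r+1}=(r+1)2^{r}$ to $c^{\HH}_{r}$, one inside $V_{2}$ or $V_{3}$ contributes $[x^{r}](1-x)^{r+1}=(r+1)(-1)^{r}$, and one inside $V_{j}$ with $j\ge 4$ contributes $0$; so the piece $\HH^{2\gamma}_{(r+1)}$ contributes $M(r+1)\bigl(2^{r}+2(-1)^{r}\bigr)\ge M(r+1)2^{r-1}\ge 3.8\,\gamma s^{r+1}/r!$ for $r\ge 2$.

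It remains to control the mixed pieces. A contributing edge of type $(p,q,1,1)$ puts exactly one vertex in some $V_{j}$ with $j\ge 4$ and distributes the multiplicities $\{p,q,1\}$ over $V_{1},V_{2},V_{3}$; by the symmetry of $V_{1},V_{2},V_{3}$ each of the at most $6$ assignments yields the same count $K$ of good edges, and $K\le\alpha_{(p,q)}\binom{s}{p}\binom{s}{q}s^{2}\le 4\gamma s^{r+1}/(\ell^{2}p!\,q!)$ by the hypothesis $\alpha_{(p,q)}\le 4\gamma\ell^{-2}$; moreover each such edge contributes $[x^{r}](1+2x)^{e_{1}}(1-x)^{r-e_{1}}=2^{e_{1}}(-1)^{r-e_{1}}$ with $e_{1}\le p$. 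Multiplying by the $\ell-3$ choices of $V_{j}$ and summing over the boundedly many partitions $(p,q)$ of $r-1$ (using $\sum_{(p,q)}2^{p}/(p!\,q!)\le 3^{r-1}/(r-1)!$), the total mixed contribution is $\ge -C'(r)\,\gamma s^{r+1}/\ell$ with $C'(r)=O(3^{r}/(r-1)!)$. Since $\ell=4(r+1)!$, this lower bound exceeds $-\gamma s^{r+1}/r!$ for all $r\ge 2$ (a short case-check suffices; for $r\le 3$ the mixed contribution is in fact nonnegative, there being no mixed type at all when $r=2$). Combining, $c^{\HH}_{r}\ge 3.8\,\gamma s^{r+1}/r!-\gamma s^{r+1}/r!\ge\gamma s^{r+1}/r!=\gamma N^{r+1}/(r!\,\ell^{r+1})$, as required.

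The one delicate point, which I expect to be the main obstacle, is the mixed-piece estimate: the per-edge contribution $2^{e_{1}}(-1)^{r-e_{1}}$ can be negative, so a priori the $2^{r}$-sized main term could be eroded. The safeguards are the heavy sparsification ($\alpha_{(p,q)}\le 4\gamma\ell^{-2}$) and the fact that $\ell=4(r+1)!$ dwarfs $2^{r}$; in addition, summing $2^{e_{1}}(-1)^{r-e_{1}}$ over the assignments of $\{p,q,1\}$ to $V_{1},V_{2},V_{3}$ one finds that the order-$s^{r+1}$ part of the mixed contribution frequently vanishes (identically $0$ for the type $(2,1,1,1)$ occurring at $r=4$, nonnegative for $(1,1,1,1)$ at $r=3$), leaving plenty of room. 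This is routine but fiddly sign-bookkeeping; every other step is a straightforward double count.
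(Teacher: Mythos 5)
Your proposal follows the same strategy as the paper: decompose $c^{\HH}_r$ by type, note that only pieces with $|\x|\le 2$ contribute (the $e_1+e_2+e_3\ge r$ observation), extract a large positive main term from $\HH^{2\gamma}_{(r+1)}$, and bound the mixed $|\x|=2$ pieces using $\alpha_{\x}\le 4\gamma\ell^{-2}$ and $\ell=4(r+1)!$. This is essentially identical to what the paper does, which bounds each $|c^{\x}_r|$ by $2^{r+2}\gamma s^{r+1}/\ell$ and observes there are at most $r/2$ types with $|\x|=2$, yielding a total negative contribution of at most $r2^{r+1}\gamma s^{r+1}/\ell$ against a main term $r2^r\gamma s^{r+1}/(r+1)!$, a clean factor-of-two margin for all $r$.

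The one place your write-up does not quite close is the crude mixed bound for intermediate $r$. After simplifying $(r+1)(2^r+2(-1)^r)$ down to $(r+1)2^{r-1}$, your main term becomes $3.8\gamma s^{r+1}/r!$, while your aggregate mixed bound is (in the same units of $\gamma s^{r+1}/r!$) about $6\cdot 3^{r-1}r/(r+1)!$, which is $\approx 6.75,\ 5.4,\ 3.4$ at $r=3,4,5$ — all above the available margin $2.8$. You dispose of $r=2,3,4$ by explicit sign computations, and $r\ge 6$ by the crude bound, but $r=5$ falls through the cracks: there the type $(2,2,1,1)$ does give a \emph{negative} contribution ($-4-4+2=-6$ summed over assignments), so the nonnegativity argument does not apply, and the simplified main term is too small for the crude bound. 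This is easily repaired: if you keep the unsimplified main term $1.9\bigl(2^r+2(-1)^r\bigr)\gamma s^{r+1}/r!$, it is at least $1.9(2^r-2)\gamma s^{r+1}/r!$, which exceeds $\bigl(6\cdot 3^{r-1}r/(r+1)!+1\bigr)\gamma s^{r+1}/r!$ for all $r\ge 3$ (and $r=2$ has no mixed types), so no sign bookkeeping is needed at all. Alternatively you could follow the paper more closely and bound each $|c^{\x}_r|$ individually by $O(2^r)\gamma s^{r+1}/\ell$ and use the $O(r)$ count of two-part partitions, which avoids the $3^{r-1}$ in the first place.
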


\begin{proof} The hypergraph $\HH=\HH^{\valpha}$ is a union of hypergraphs 
\[
\bigcup_{\x\in \PP_{r-1}}\HH^{\alpha_x}_{\x^+}\, .
\]
Let us write $c^{\x}_r$ for the coefficient of $x^r$ in $Q^{\HH^{\alpha_x}_{\x^+}}(x)$.  By linearity we have that
\[
c^{\HH}_r\, =\, \sum_{\x \in \PP_{r-1}}c^{\x}_r\, .
\]
We shall prove that
\begin{enumerate}
\item[(a)] $\begin{displaystyle} c^{(r-1)}_r\, \ge\, \frac{r 2^r \gamma s^{r+1}}{(r+1)!}\phantom{\Bigg|}\end{displaystyle}$,
\item[(b)] $\begin{displaystyle} |c^{\x}_r|\, \le\, \frac{2^{r+2} \gamma s^{r+1}}{\ell}\phantom{\Bigg|}\end{displaystyle}$ if $|\x|=2$,
\item[(c)] $\begin{displaystyle} c^{\x}_r\, =\, 0\end{displaystyle}$ if $\|x\|\ge 3$.
\end{enumerate}
The required result will now follow from linearity.  We recall that $\ell= 4(r+1)!$ and that there are at most $r/2$ types $\x$ with $|x|=2$.   It follows that
 \begin{align*}
 c^{\HH}_r\, & \ge\, \frac{r 2^r \gamma s^{r+1}}{(r+1)!}\, -\, \frac{r2^{r+1} \gamma s^{r+1}}{\ell}\\
 & =\, \frac{r2^r\gamma s^{r+1} \, \left(\ell\, -\, 2(r+1)!\right)}{\ell(r+1)!}\\
 & \ge \, \frac{r2^{r-1}\gamma s^{r+1}}{(r+1)!}\\
 & \ge \, \frac{\gamma s^{r+1}}{r!}\, ,
 \end{align*}
as required.

We now prove (a), (b), and (c) stated above.

We begin with (a).  Since $\alpha_{(r-1)}=2\gamma$ the hypergraph $\HH^{2\gamma}_{(r+1)}$ in question contains $(2\gamma)$-good edges of type $(r+1)$, that is, $(2\gamma)$-good edges contained in one of the parts $V_i$.  The coefficient of $\eps^n$ in the corresponding polynomial
\[
Q^{\HH^{2\gamma}_{(r+1)}}(x) = \sum_{e \in E(\HH)} (1+2x)^{e_1} (1-x)^{e_2 + e_3}
\]
satisfies\footnote{We replace by $2$ by $7/4$ as we may lose something when we take the integer part.  The reader may easily verify the details.}
\begin{align*}
c^{(r-1)}_r\, \ge \, \frac{7}{4}\gamma \binom{s}{r+1}\left((r+1)2^r\, +\, 2(r+1)(-1)^r\right) \\
\ge\, \frac{r2^{r} \gamma s^{r+1}}{(r+1)!}\, ,
\end{align*}
as required.

We now prove (b).  A type $\x$ with $|\x|=2$ is of the form $\x=(j,r-j-1)$ for some $(r-1)/2\le j\le r-2$.  The corresponding $\x^+$ is $\x^+=(j,r-j-1,1,1)$.  And the corresponding hypergraph consists of $\alpha_{\x}$-good edges of type $\x^+$.  A contribution to the coefficient of $\eps^r$ is only made by edges which use all $3$ of the parts $V_1,V_2,V_3$ and one other part.  We obtain that
\begin{align*}
|c^{\x}_r|\, & \le\, 2\alpha_{\x}(\ell-3)\binom{s}{j}\binom{s}{r-j-1}s^2\left(2^j\, +\, 2^{r-1-j}\, +\, 2\right)\\
& \le \, \frac{8\gamma\ell s^{r+1} \, 2^{r-1}}{\ell^2 j! (r-1-j)!}\\
&\le \, \frac{2^{r+2} \gamma s^{r+1}}{\ell}\, ,
\end{align*}
as required.

Finally, we note that part (c) follows immediately from the fact that if $|\x|\ge 3$ then at most $r-1$ elements of any edge of the corresponding hypergraph lie in the union of $V_1\cup V_2\cup V_3$ and so the coefficient of $\eps^r$ is $0$.
\end{proof}

\end{document}